\newtheorem{theorem}{Theorem}[section]
\newtheorem{corollary}[theorem]{Corollary}
\newtheorem{lemma}[theorem]{Lemma}
\newtheorem{proposition}[theorem]{Proposition}
\newtheorem{conjecture}[theorem]{Conjecture}
\theoremstyle{definition}
\newtheorem{remark}[theorem]{Remark}
\def\Dmod{\mbox{Dmod}}
\def\Gr{{{Gr}}}
\def\sK{{\mathcal{K}}}
\def\sM{{\mathcal{M}}}
\def\sN{{\mathcal{N}}}
\def\sIL{{\mathcal{IL}}}
\def\ip{\mbox{ip}}
\def\IZ{\mbox{IZ}}
\def\IB{\mbox{IB}}
\def\Bl{\mbox{Bl}}
\def\tr{\mbox{tr}}
\def\sA{\mathcal{A}}
\def\tsA{\tilde{\mathcal{A}}}
\def\tsE{\tilde{\mathcal{E}}}
\def\Bl{\mbox{Bl}}
\def\tZ{\tilde{Z}}
\def\sL{\mathcal{L}}
\def\wt{\widetilde}
\def\tp{\tilde{p}}
\def\tB{\tilde{B}}
\def\P{\mathbb{P}}
\def\A{{\mathbb{A}}}
\def\id{{\mathrm{id}}}
\def\l{{\lambda}}
\def\sl{\mathfrak{sl}}
\def\so{\mathfrak{so}}
\def\g{\mathfrak{g}}
\def\fh{\mathfrak{h}}
\def\O{{\mathcal O}}
\def\sE{{\mathcal{E}}}
\def\tsE{{\tilde{\mathcal{E}}}}
\def\sF{{\mathcal{F}}}
\def\sL{{\mathcal{L}}}
\def\sH{{\mathcal{H}}}
\def\sT{{\mathcal{T}}}
\def\sP{{\mathcal{P}}}
\def\sQ{{\mathcal{Q}}}
\def\bG{\mathbb{G}}
\def\pt{\mathrm{pt}}
\def\tY{\tilde{Y}}
\newcommand{\IG}{\mathbb{IG}}
\newcommand{\Z}{\mathbb{Z}}
\newcommand{\C}{\mathbb{C}}
\newcommand{\Cone}{\operatorname{Cone}}
\newcommand{\la}{\langle}
\newcommand{\ra}{\rangle}
\newcommand{\Ext}{\operatorname{Ext}}
\newcommand{\Hom}{\operatorname{Hom}}
\newcommand{\End}{\operatorname{End}}
\newcommand{\rank}{\operatorname{rank}}
\newcommand{\spn}{\operatorname{span}}
\title[Flops and about: a guide]{Flops and about: a guide}
\author[Sabin Cautis]{Sabin Cautis \thanks{The author is thankful for the support received through NSF grant DMS-1101439 and the Alfred P. Sloan foundation.}}
\begin{document}

\begin{abstract}
Stratified flops show up in the birational geometry of symplectic varieties such as resolutions of nilpotent orbits and moduli spaces of sheaves. Constructing derived equivalences between varieties related by such flops is, strangely enough, related to areas in representation theory and knot homology. In this paper we discuss how to construct such equivalences, explain the main tool for doing this (categorical Lie algebra actions) and comment on various related topics. 
\end{abstract}

\begin{classification}Primary 14-F05; Secondary 14-M15, 17-B37.
\end{classification}

\begin{keywords}
Flops, derived categories of coherent sheaves, derived equivalences, higher representation theory. 
\end{keywords}

\maketitle
\tableofcontents

\section{Introduction}

The stratified Mukai flop is an algebro-geometric construction relating two birational varieties. There are three basic types of Mukai flops (A, D and E6;I/E6;II) named after the Lie algebra to which they are related. Namikawa coined these terms in \cite{Nam3} where he showed that any two Springer resolutions of a nilpotent orbit closure are connected by a series of such flops (this result also appears in \cite{F}). There are also deformations of these Mukai flops which we call Atiyah flops. 

The local model of a Mukai flop of type A is a correspondence which relates the cotangent bundles $T^\star \bG(k,N)$ and $T^\star \bG(N-k,N)$ of Grassmannians. These cotangent bundles are examples of Springer resolutions of the closure of nilpotent orbits. In this case the orbit is that of a matrix $X \in GL_N$ where $X^2=0$ and the rank of $X$ is $\mbox{min}(k,N-k)$. 

Stratified flops also control the birational geometry of moduli spaces of sheaves on surfaces. In \cite{M} Markman examined Brill-Noether type stratifications of the moduli spaces of sheaves on a fixed K3 surface. He showed that two moduli spaces with Mukai vectors related by certain involutions of the Mukai lattice are related by stratified Mukai flops. 

A basic question is when two birational varieties $X$ and $Y$ have isomorphic derived categories of coherent sheaves $D(X)$ and $D(Y)$. If $D(X) \cong D(Y)$ then we say that $X$ and $Y$ are derived equivalent. One general conjecture implies that two varieties related by a stratified flop are derived equivalent. In particular, $T^\star \bG(k,N)$ and $T^\star \bG(N-k,N)$ should be derived equivalent. 

Constructing this equivalence is the motivating problem discussed in this paper. Of course, $T^\star \bG(k,N)$ and $T^\star \bG(N-k,N)$ are actually isomorphic, but this isomorphism is not canonical. The derived equivalence described in section \ref{sec:equivlocmod} is canonical in the sense that it also works in families. This means that for any vector bundle $W$ over some base one may consider the relative cotangent bundles $T^\star \bG(k,W)$ and $T^\star \bG(N-k,W)$. These might not be isomorphic (for the same reason $W$ and $W^\vee$ might not be isomorphic) but, nevertheless, are derived equivalent. 

The case $k=1$ goes back a few years to the work of Kawamata \cite{K1} and Namikawa \cite{Nam1} who constructed equivalences $D(T^\star \bG(1,N)) \xrightarrow{\sim} D(T^\star \bG(N-1,N))$. Kawamata \cite{K2} was also able to work out the case $k=2$ and $N=4$ and conjecture explicit equivalences when $k=2$ and $N > 4$. 

To deal with arbitrary $k$ and $N$ we adopted a technique used by Chuang and Rouquier \cite{CR} in modular representation theory. The idea \cite{CKL1} is to construct a categorical $\sl_2$ action (defined in section \ref{sec:catsl2action}) on the union of all $D(T^\star \bG(k,N))$ where $N$ is fixed and $k=0, \dots, N$. Recall that given an $\sl_2$ representation one can construct an action of its Weyl group which induces an isomorphism of weight spaces. Likewise, given a categorical $\sl_2$ action one can construct an action of its braid group. In this case of cotangent bundles to Grassmannians this braid group induces natural equivalences $D(T^\star \bG(k,N)) \xrightarrow{\sim} D(T^\star \bG(N-k,N))$ \cite{CKL2,CKL3}. In a sense, these equivalences unify Seidel-Thomas twists \cite{ST} and $\P^n$-twists \cite{HT} into a more general concept. 

Categorical $\sl_2$ actions have a counterpart for any Kac-Moody Lie algebra $\g$. When $\g = \sl_n$ one can define a categorical $\sl_n$ action on cotangent bundles to $n$-step partial flag varieties. This induces an action of the braid group on $n$ strands \cite{CK3} on the derived categories of these varieties which generalizes work of Khovanov-Thomas \cite{KT}, Riche \cite{Ric} and Bezrukavnikov-Riche \cite{BR}.

Stratified flops also show up in the geometry of the {affine Grassmannian}\index{affine Grassmannian}. More precisely, the twisted products $\Gr_\l \tilde{\times} \Gr_\mu$ and $\Gr_\mu \tilde{\times} \Gr_\l$ of orbits in the affine Grassmannian of $PGL_N$ are related by stratified Mukai flops (see \cite[Sec. 1]{CK2}). One can construct a geometric categorical $\sl_2$ actions here which subsequently induces an equivalence $D(\Gr_\l \tilde{\times} \Gr_\mu) \xrightarrow{\sim} D(\Gr_\mu \tilde{\times} \Gr_\l)$. 

More generally, one can define a geometric categorical $\sl_n$ action on categories of the form $D(\Gr_{\l_1} \tilde{\times} \dots \tilde{\times} \Gr_{\l_n})$ where $\l_1, \dots, \l_n$ are fundamental weights. This action induces a braid group action on these categories. In \cite{CK1,CK2,C2} these braid group actions were used to construct {homological knot invariants}\index{knot invariants} such as {Khovanov homology}\index{Khovanov homology}. 

This paper is, for the most part, a survey of topics related to flops, categorical Lie algebra actions and derived equivalences. It is an expanded version of the talk given at the conference ``Derived categories'' organized by Yujiro Kawamata and Yukinobu Toda in Tokyo in January 2011. 

\subsection{Acknowledgements}
I began working in this area with the paper \cite{CK1} jointly written with Joel Kamnitzer. In it we give an algebro-geometric construction of Khovanov knot homology. Trying, at first somewhat unsuccessfully, to generalize this construction led us, over several years, in several tangential directions. I would like to thank Joel for many years of truly fantastic ideas and insights which he shared with great enthusiasm and without which I am convinced none of this would have been possible. 

I would also like to thank Yujiro Kawamata and Yukinobu Toda for inviting me and organizing a very interesting, inspirational and enjoyable conference in Tokyo in January 2011.

\section{Stratified flops of type A -- the local model}

Here we recall the definitions of stratified Mukai flops and stratified Atiyah flops of type A. 

\subsection{Cotangent bundles to Grassmannians}\label{sec:flops1}

The local model for a {\em stratified Mukai flop of type A}\index{stratified Mukai flop of type A} is based on {cotangent bundles}\index{cotangent bundles} to Grassmannians. The cotangent bundle $T^\star \bG(k,N)$ can be described very explicitly as
\begin{equation}\label{eq:Grassdef}
\{ (X,V): X \in \End(\C^N), 0 \xrightarrow{k} V \xrightarrow{N-k} \C^N, X \C^N \subset V \text{ and } X V \subset 0 \}
\end{equation}
where $\bG(k,N)$ denotes the Grassmannian of $k$-planes in $\C^N$. The arrows denote inclusions and the superscripts indicate the codimension of the inclusion. 

We will suppose from now on that $2k \le N$. These cotangent bundles come equipped with the affinization map 
$$p(k,N): T^\star \bG(k,N) \rightarrow \overline{B(k,N)}$$ 
where 
$$B(k,N) := \{ X \in \End(\C^N) : X^2 = 0 \text{ and } \dim(\ker X) = N-k \}$$
and $p$ is the map which forgets $V$. This map is birational since generically the rank of $X$ is $k$ and one can recover $V$ as the image of $X$. Likewise there is a projection map 
$$p(N-k,N): T^\star \bG(N-k,N) \rightarrow \overline{B(k,N)}$$ 
which is also birational since one can recover $V$ as the kernel of $X$. The triple 
\begin{equation}\label{eq:10}
\xymatrix{
T^* \bG(k,N) \ar[dr]_{p(k,N)} & & T^* \bG(N-k,N) \ar[dl]^{p(N-k,N)} \\
& \overline{B(k,N)} &
}
\end{equation}
is the local model for a stratified Mukai flop of type A. When $k=1$ this is the usual (and better known) Mukai flop. 

\subsection{Deformations of cotangent bundles}\label{sec:flops2}

The cotangent bundles above have a natural one-parameter deformation $\wt{T^\star \bG(k,N)}$ over $\A^1$. This deformation can be described explicitly as 
\begin{align}\label{eq:defGrassdef}
\{(X,V,x): & X \in \End(\C^N), 0 \subset V \subset \C^N, x \in \C, \dim(V) = k \nonumber  \\ 
& \text{ and } X \C^N \subset V,  (X-x \cdot \id) V \subset 0 \}
\end{align}
where the map to $\A^1$ remembers $x$. The fibre over $x=0$ is clearly just $T^\star \bG(k,N)$. These varieties also come equipped with the affinization map 
$$\tp(k,N): T^\star \bG(k,N) \rightarrow \overline{\tB(k,N)}$$ 
where $\tB(k,N)$ is the variety
$$\{(X,x): X \in \End(\C^N), x \in \C, X(X- x \cdot \id) = 0 \text{ and } \dim (\ker (X - x \cdot \id)) = k \}$$
and $\tp(k,N)$ forgets $V$. The map is again birational. In fact, it is an isomorphism if $x \ne 0$ because one can recover $X$ as the kernel of $(X-x \cdot \id)$. The diagram
\begin{equation*}
\xymatrix{
\wt{T^* \bG(k,N)} \ar[dr]^{\tp(k,N)} & & \wt{T^* \bG(N-k,N)} \ar[dl]_{\tp(N-k,N)} \\
& \overline{\tB(k,N)} \cong \overline{\tB(N-k,N)} &}
\end{equation*}
is the local model for a {\em stratified Atiyah flop}\index{stratified Atiyah flop} of type A. The isomorphism $\overline{\tB(k,N)} \cong \overline{\tB(N-k,N)}$ is given by $(X,x) \mapsto (X - x \cdot \id, -x)$. When $k=1$ and $N=2$ this is the usual Atiyah flop (hence the terminology) where both deformed cotangent bundles turn out to be isomorphic to the total space of the vector bundle $\O_{\P^1}(-1) \oplus \O_{\P^1}(-1)$ over $\P^1$. 

\subsubsection{$\C^\times$-actions}
There are compatible $\C^\times$-actions on $T^\star \bG(k,N)$ and its deformation defined by
$$t \cdot (X,V) = (t^2 X,V) \text{ and } t \cdot (X,V,x) = (t^2 X, V, t^2 x)
$$
respectively. Also, notice that both $T^\star \bG(k,N)$ and its deformation carry a tautological bundle, denoted $V$, whose fibre over $(X,V)$ (or $(X,V,x)$) is $V$.

\section{Geometric categorical $\sl_2$ actions}\label{sec:catsl2action}

The main tool used to construct derived equivalences between flops is the notion of a {\em geometric categorical $\sl_2$ action}\index{geometric categorical ${\mathfrak{sl}}_2$ action}. The idea of using categorical $\sl_2$ actions originates with Chuang and Rouquier's remarkable proof of Brou\'e's abelian defect group conjecture for symmetric groups \cite{CR}. They construct and use such an action to define equivalences between blocks of representations of the symmetric group in positive characteristic. We adapt their approach to categories of coherent sheaves. 

\subsection{Preliminary concepts} 

All varieties are defined over $\C$. If $X$ is a variety we denote by $D(X)$ the {\em bounded derived category of coherent sheaves}\index{derived category of coherent sheaves} on $X$. As usual, we denote by $[1]$ the cohomological shift in $D(X)$ downwards by $1$. 

\subsubsection{Fourier-Mukai transforms}
An object $\sP \in D(X \times Y)$ whose support is proper over $Y$ induces a {\em Fourier-Mukai}\index{Fourier-Mukai} (FM) functor $\Phi_{\sP}: D(X) \rightarrow D(Y)$ via $(\cdot) \mapsto \pi_{2*}(\pi_1^* (\cdot) \otimes \sP)$ (where every operation is derived). One says that $\sP$ is the {\em FM kernel}\index{Fourier-Mukai kernel} which induces $\Phi_{\sP}$. The right and left adjoints $\Phi_{\sP}^R$ and $\Phi_{\sP}^L$ are induced by $\sP_R := \sP^\vee \otimes \pi_2^* \omega_X [\dim(X)]$ and $\sP_L := \sP^\vee \otimes \pi_1^* \omega_Y [\dim(Y)]$ respectively.

If $\sQ \in D(Y \times Z)$ then $\Phi_{\sQ}  \Phi_{\sP} \cong \Phi_{\sQ * \sP}: D(X) \rightarrow D(Z)$ where $\sQ * \sP = \pi_{13*}(\pi_{12}^* \sP \otimes \pi_{23}^* \sQ)$ is the convolution product. So instead of talking about functors and compositions we will speak of kernels and convolutions.

\subsubsection{$\C^\times$-equivariance}
If $X$ carries a $\C^\times$-action then we will consider the bounded derived category of $\C^\times$-equivariant coherent sheaves on $X$ which, abusing notation, we also denote by $D(X)$. The sheaf $\O_X\{i\}$ denotes the structure sheaf of $X$ shifted with respect to the $\C^\times$-action so that if $f \in \O_X(U)$ is a local function then viewed as a section $f' \in \O_X\{i\}(U)$ we have $t \cdot f' = t^{-i}(t \cdot f)$. We denote by $\{i\}$ the operation of tensoring with $\O_X\{i\}$. 

Since $D(X)$ carries a grading $\{\cdot\}$ its Grothendieck group is actually a $\Z[q,q^{-1}]$-module where $-q$ acts by twisting by $\{1\}$. We usually tensor the Grothendieck group with $\C$ so that it becomes a $\C[q,q^{-1}]$-module and denote it $K(X)$.

\subsubsection{Convolution of complexes}\label{sec:convcpx}
Now consider a complex 
$$\sP_\bullet := \left[ \sP_m \xrightarrow{d} \sP_{m-1} \xrightarrow{d} \dots \xrightarrow{d} \sP_1 \xrightarrow{d} \sP_0 \right]$$
where $\sP_i \in D(X)$ and $d^2=0$. If $m=1$ one can just take the cone and obtain an object in $D(X)$. If $m > 1$ one would like to take an iterated cone. This is commonly called a right or a left {\em convolution}\index{convolution of complexes} of $\sP_\bullet$ depending on whether you start the iterated cone from the right end or from the left end. Do not confuse this convolution with the convolution of kernels described above! 

In general a right convolution is not guaranteed to exist or to be unique. This is because the $\Cone$ operation is not functorial. However, under the following cohomological conditions 
$$\Hom(\sP_{i+k+1}[k], \sP_i) = 0 \text{ and } \Hom(\sP_{i+k+2}[k], \sP_i) = 0 \text{ for } i \ge 0, k \ge 1$$
a unique right convolution exists. For details see \cite[Sec. 3.4]{CKL3}.

\subsection{Definition}\label{sec:def}

Let us recall the definition of a geometric categorical $\sl_2$ action from \cite{CKL1}. To shorten notation we will write $H^\star(\P^r)$ for the (doubly) graded vector space
\begin{equation*}
\C[r]\{-r\} \oplus \C[r-2]\{-r+2\} \oplus \dots \oplus \C[-r+2]\{r-2\} \oplus \C[-r]\{r\}.
\end{equation*}
By convention $H^\star(\P^{-1})$ is zero.

A geometric categorical $\sl_2$ action consists of the following data.

\begin{enumerate}
\item A collection of smooth complex varieties $Y(\l)$ indexed by $\l \in \Z$ and equipped with $\C^\times$-actions.
\item Fourier-Mukai kernels
$$\sE^{(r)}(\l) \in  D(Y(\l-r) \times Y(\l+r)) \text{ and } \sF^{(r)}(\l) \in  D(Y(\l+r) \times Y(\l-r))$$
(which are $\C^\times$ equivariant).
We write $\sE(\l)$ for $\sE^{(1)}(\l)$ and $\sF(\l)$ for $\sF^{(1)}(\l)$ while $\sE^{(0)}(\l)$ and $\sF^{(0)}(\l)$ are equal to the identity kernels $\O_\Delta$.
\item For each $Y(\l)$ a flat deformation $\tY(\l) \rightarrow \A^1$ carrying a $\C^\times$-action compatible with the action $x \mapsto t^2 x$ (where $t \in \C^\times$) on the base $\A^1$.
\end{enumerate}

On this data we impose the following additional conditions.

\begin{enumerate}
\item $Y(\l) = \emptyset$ for $\l \gg 0$ or $\l \ll 0$. Moreover, each (graded piece of the) $\Hom$ space between two objects in $D(Y(\l))$ is finite dimensional. In particular, this means that if $Y(\l) \ne \emptyset$ then $\End(\O_{Y(\l)}) = \C \cdot I$.

\item All $\sE^{(r)}$s and $\sF^{(r)}$s are sheaves (i.e. complexes supported in degree zero).

\item
$\sE^{(r)}(\l)$ and $\sF^{(r)}(\l)$ are left and right adjoints of each other up to shift. More precisely
\begin{enumerate}
\item $\sE^{(r)}(\l)_R = \sF^{(r)}(\l)[r\l]\{-r\l\}$ and $\sF^{(r)}(\l)_L = \sE^{(r)}(\l)[r\l]\{-r\l\}$
\item $\sE^{(r)}(\l)_L = \sF^{(r)}(\l)[-r\l]\{r\l\}$ and $\sF^{(r)}(\l)_R = \sE^{(r)}(\l)[-r\l]\{r\l\}$
\end{enumerate}

\item
At the level of cohomology of complexes we have
$$\sH^*(\sE(\l+r) * \sE^{(r)}(\l-1)) \cong \sE^{(r+1)}(\l) \otimes_{\C} H^\star(\P^{r}).$$

\item
If $ \l \le 0 $ then
\begin{equation*}
\sF(\l+1) * \sE(\l+1) \cong \sE(\l-1) * \sF(\l-1)  \oplus \sP
\end{equation*}
where $\sH^*(\sP) \cong \O_\Delta \otimes_\C H^\star(\P^{-\l-1})$.

Similarly, if $\l \ge 0$ then
\begin{equation*}
\sE(\l-1) * \sF(\l-1) \cong \sF(\l+1) * \sE(\l+1) \oplus \sP'
\end{equation*}
where $\sH^*(\sP') \cong \O_\Delta \otimes_\C H^\star(\P^{\l-1})$.

\item  We have
$$\sH^*(i_{23*} \sE(\l+1) * i_{12*} \sE(\l-1)) \cong \sE^{(2)}(\l)[-1]\{1\} \oplus \sE^{(2)}(\l)[2]\{-3\}$$
where the $i_{12}$ and $i_{23}$ are the closed immersions
\begin{align*}
i_{12}: Y(\l-2) \times Y(\l) \rightarrow Y(\l-2) \times \tY(\l) \\i_{23}: Y(\l) \times Y(\l+2) \rightarrow \tY(\l) \times Y(\l+2).
\end{align*}

\item If $\l \le 0$ and $k \ge 1$ then the image of $\mbox{supp}(\sE^{(r)}(\l-r))$ under the projection to $Y(\l)$ is not contained in the image of $\mbox{supp}(\sE^{(r+k)}(\l-r-k))$ also under the projection to $Y(\l)$. Similarly, if $\l \ge 0$ and $k \ge 1$ then the image of $\mbox{supp}(\sE^{(r)}(\l+r))$ in $Y(\l)$ is not contained in the image of $\mbox{supp}(\sE^{(r+k)}(\l+r+k))$.

\end{enumerate}

At the level of Grothendieck groups $\sE$ and $\sF$ induce maps of $\C$-vector spaces
$$E: K(Y(\l-1)) \rightarrow K(Y(\l+1)) \text{ and } F: K(Y(\l+1)) \rightarrow K(Y(\l-1)).$$
This gives an action of $\sl_2$ on $\oplus_\l K(Y(\l))$ where the weight spaces are $K(Y(\l))$. In fact, everything is over $\C[q,q^{-1}]$ and we actually obtain a $U_q(\sl_2)$ representation. So the above action should really be called a geometric categorical $U_q(\sl_2)$ action. 

\subsection{Some remarks}\label{sec:remarks}

The definition above is not necessarily the simplest but is tailored so that it is easier to check on categories of coherent sheaves. Here are some remarks about the relevance of conditions (i) through (vii) above. 

Condition (i) is used to ensure that the Krull-Schmidt property holds (namely unique decomposition into irreducibles). Condition (ii) is used to make sense of condition (iv). 

Conditions (iv) and (v) are checked only at the level of cohomology. This is because it is often possible to compute the cohomology of an object (like $\sP$ in condition (v)) but difficult to show that the object is formal (i.e. the direct sum of its cohomology). The role of the deformation $\tY(\l) \rightarrow \A^1$ is actually related to this issue. We explain this now. 

The short exact sequence of tangent bundles 
$$0 \rightarrow T_{Y(\l)} \rightarrow T_{\tY(\l)}|_{Y(\l)} \rightarrow N_{Y(\l)/\tY(\l)} \rightarrow 0$$
gives us a connecting map $b \in H^1(Y(\l), T_{Y(\l)} \{-2\})$ since $N_{Y(\l)/\tY(\l)} \cong \O_{Y(\l)} \{2\}$. This is just the first order deformation corresponding to $\tY(\l) \rightarrow \A^1$ and is uniquely defined up to a non-zero multiple. Now, the Hochschild-Kostant-Rosenberg isomorphism states that 
$$\Delta^* \Delta_* \O_{Y(\l)} \cong \bigoplus_i \wedge^i T^\star_{Y(\l)} [i]$$
where $\Delta: Y(\l) \rightarrow Y(\l) \times Y(\l)$ is the inclusion as the diagonal. This implies that 
\begin{align}
& \Hom(\Delta_* \O_{Y(\l)}, \Delta_* \O_{Y(\l)} [2]) \nonumber \\
\cong & H^0(Y(\l), \wedge^2 T_{Y(\l)}) \oplus H^1(Y(\l), T_{Y(\l)}) \oplus H^2(Y(\l), \O_{Y(\l)}). \nonumber
\end{align}
In particular, this means that $b$ induces a map 
\begin{equation}\label{eq:7}
\beta: \Delta_* \O_{Y(\l)} \rightarrow \Delta_* \O_{Y(\l)} [2]\{-2\}.
\end{equation}
Unfortunately, in practice it is difficult to get your hands on such a map. The purpose of the deformation $\tY(\l)$ is simply to yield $\beta$. 

Now consider the map 
$$I \beta I: \sE * \Delta_* \O_{Y(\l)} * \sE \rightarrow \sE * \Delta_* \O_{Y(\l)} * \sE [2] \{-2\}$$
where $\sE * \sE \in D(Y(\l-2) \times Y(\l+2))$. The cohomology $\sH^{-1}$ of both sides is $\sE^{(2)} \{-1\}$. The content of condition (vi) is that the map above induces an isomorphism on $\sH^{-1}$. In turn, this allows you to conclude that $\sE * \sE$ equals $\sE^{(2)} [-1]\{1\} \oplus \sE^{(2)} [1]\{-1\}$ on the nose rather than at the level of cohomology. This is by a little trick that goes back at least to Deligne. For more details see \cite{CKL2}. 

Finally, condition (vii) is an annoying technical condition which is only ever used once (namely in Lemma 4.6 of \cite{CKL2} which is itself technical in nature). Though unsightly, its main advantage is that it is very easy to check.

\subsection{Inducing equivalences}\label{sec:equivalences}

We first explain why all this is related to constructing {\em equivalences}\index{derived equivalences}. Suppose one has an $\sl_2$ action on a vector space $V$. The action of $H := \left( \begin{matrix} 1 & 0 \\ 0 & -1 \end{matrix} \right) \in \sl_2$ breaks up $V$ into $H$-eigenspaces $V(\l)$ where $Hv = \l v$ if $v \in V(\l)$. Moreover, using the relation $[E,F] = H$ where 
$$E := \left( \begin{matrix} 0 & 1 \\ 0 & 0 \end{matrix} \right) \in \sl_2 \text{ and } 
F := \left( \begin{matrix} 0 & 0 \\ 1 & 0 \end{matrix} \right) \in \sl_2$$
one can check that $E: V(\l) \rightarrow V(\l+2)$ and $F: V(\l) \rightarrow V(\l-2)$. 

If $V(\l)=0$ for $\l \gg 0$ or $\l \ll 0$ then this action integrates to an action of the Lie group $SL_2(\C)$. Here it is known that the reflection element
$$t = \left[ \begin{matrix} 0 & -1 \\ 1 & 0 \end{matrix} \right] $$
induces an isomorphism of vector spaces $V(\l) \xrightarrow{\sim} V(-\l)$. Moreover, if say $\l \ge 0$, we can write $t$ as 
\begin{equation}\label{eq:t}
t = F^{(\l)} - F^{(\l+1)}E + F^{(\l+2)}E^{(2)} \pm \dots
\end{equation}
where $E^{(k)} := E^k/k!$ and $F^{(k)} := F^k/k!$. Notice that the sum is finite since $V(\l)=0$ for $\l \gg 0$.

Now we try to immitate this construction with categories. We replace $V(\l)$ by the category $D(Y(\l))$, the functors $E^{(r)}$ and $F^{(r)}$ by the kernels $\sE^{(r)}$ and $\sF^{(r)}$ and the sum (\ref{eq:t}) describing $t$ with a complex 
$$\Theta_* = \left[ \dots \rightarrow \Theta_s \rightarrow \Theta_{s-1} \rightarrow \dots \rightarrow \Theta_1 \rightarrow \Theta_0 \right]$$
where 
\begin{equation}\label{eq:9}
\Theta_s := \sF^{(\l+s)}(s) * \sE^{(s)}(\l+s) [-s]\{s\} \in D(Y(\l) \times Y(-\l)).
\end{equation}
Again, this complex is finite since $Y(\l)$ is empty for $\l \gg 0$ or $\l \ll 0$. The differential is given by the composition 
$$\sF^{(\l+s+1)} * \sE^{(s+1)} \rightarrow \sF^{(\l+s)} * \sF * \sE * \sE^{(s)} [-\l-2s]\{\l+2s\} \rightarrow \sF^{(\l+s)} * \sE^{(s)} [1]\{-1\}$$
where the first map is the inclusion of $\sF^{(\l+s+1)}$ and $\sE^{(s+1)}$ into the lowest cohomological degrees of $\sF^{(\l+s)} * \sF$ and $\sE * \sE^{(s)}$ respectively while the second map is induced by the adjunction map $\sF * \sE \rightarrow \O_\Delta [\l+2s+1] \{-\l-2s-1\}$ (using that $\sF$ is the left adjoint of $\sE$ up to a shift). The complex $\Theta_*$ is sometimes called Rickard's complex. 

\begin{theorem}\cite[Thm. 2.8]{CKL3}\label{thm:main1}
The complex $\Theta_*$ has a unique right convolution $\sT(\l) \in D(Y(\l) \times Y(-\l))$. Moreover, $\Phi_{\sT(\l)}: D(Y(\l)) \xrightarrow{\sim} D(Y(-\l))$ is an equivalence which categorifies the isomorphism $t: K(Y(\l)) \xrightarrow{\sim} K(Y(-\l))$.
\end{theorem}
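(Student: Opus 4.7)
The plan is to handle the two claims in sequence: existence and uniqueness of the right convolution $\sT(\l)$, and then the equivalence property. The $K$-theoretic compatibility will be automatic.

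For existence and uniqueness, I would invoke the cohomological criterion recalled in Section \ref{sec:convcpx}: it suffices to check $\Hom(\Theta_{i+k+1}[k], \Theta_i) = 0$ and $\Hom(\Theta_{i+k+2}[k], \Theta_i) = 0$ for $i \ge 0$, $k \ge 1$. Using the formula $\Theta_s = \sF^{(\l+s)} * \sE^{(s)}[-s]\{s\}$ and the adjunctions in condition (iii), one rewrites these groups as $\Hom$'s of the form $\Hom(\sE^{(a)} * \sF^{(b)} * \sF^{(c)} * \sE^{(d)}, \O_\Delta)$ with appropriate shifts. Iterating the commutation relation in condition (v), together with the divided-power relation of condition (iv), one decomposes each such composition (at the cohomological level) into direct sums of terms $\sE^{(c)} * \sF^{(d)} \otimes H^\star(\P^r)$. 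The only contribution that can possibly map to $\O_\Delta$ is the $c = d = 0$ piece, and a degree count shows it sits in the wrong cohomological degree to contribute. Finiteness of Hom and $\End(\O_{Y(\l)}) = \C \cdot I$ from condition (i) ensure the Krull--Schmidt-style bookkeeping is rigorous.

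For the equivalence, I would construct the candidate inverse $\sT(-\l) \in D(Y(-\l) \times Y(\l))$ by the same Rickard recipe applied to $-\l$, and aim to prove $\sT(-\l) * \sT(\l) \cong \O_\Delta$. The strategy is telescoping: convolving the two complexes gives a bicomplex whose entries are compositions $\sF^{(a)} * \sE^{(b)} * \sF^{(c)} * \sE^{(d)}$, which one simplifies by repeated use of condition (v). On Grothendieck groups the corresponding identity $t \cdot t^{-1} = 1$ forces massive cancellation, and the task is to lift this to an honest isomorphism of convolutions. This is exactly the place where the deformation $\tY(\l)$ enters: the map $\beta$ of (\ref{eq:7}) and condition (vi) rigidify the decompositions produced by condition (v) from the level of cohomology to the level of complexes, along the lines of the Deligne-style trick recalled in Section \ref{sec:remarks}. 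One then builds the isomorphism by induction on $|\l|$, peeling off the outermost $\sE$-$\sF$ pair at each step and reducing to a smaller case.

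Finally, the $K$-theoretic statement is immediate. By construction $[\sE^{(r)}]$ and $[\sF^{(r)}]$ act as $E^{(r)}$ and $F^{(r)}$, and the Euler characteristic of $\Theta_*$ is precisely the alternating sum (\ref{eq:t}) presenting $t$, so $[\Phi_{\sT(\l)}] = t$ on $K$-theory.

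The main obstacle is the equivalence step, and within it the lifting from cohomology to complexes. The $\sl_2$ combinatorics predict the answer correctly, but without condition (vi) and the map $\beta$ supplied by $\tY(\l)$ one cannot rule out nontrivial extensions that would obstruct the cancellation — this is the technical heart of \cite{CKL2,CKL3} and where most of the real work lives.
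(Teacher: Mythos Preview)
Your convolution argument for existence and uniqueness is fine and matches what the paper implicitly uses. The problem is the equivalence step.

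Your candidate inverse is wrong: $\sT(-\l) * \sT(\l)$ is \emph{not} $\O_\Delta$. The paper itself computes this composition in Proposition~\ref{prop:main3} (in the case where $Y(\l)=\emptyset$ for $\l>n+1$) and finds that it is a $\P^n$-twist, i.e.\ the convolution of a genuine three-term complex $\sF*\sE[-n-2]\to\sF*\sE[-n]\to\id$, which is a nontrivial autoequivalence. More generally, Proposition~\ref{prop:main2} shows that even in the Grassmannian example $\sT(k,N)^{-1}$ and $\sT(N-k,N)$ differ by a line bundle twist, and the paper explicitly remarks that this relation is \emph{not} a formal consequence of a geometric categorical $\sl_2$ action. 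So the telescoping you are hoping for simply does not collapse to the diagonal, and your induction on $|\l|$ has the wrong target.

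What you are missing is the nilHecke algebra. The paper's two-step proof is: (1) use the deformation $\tY(\l)$ and condition (vi) to build maps $X:\sE\to\sE[2]\{-2\}$ and $T:\sE*\sE\to\sE*\sE[-2]\{2\}$ satisfying the nilHecke relations (this is \cite{CKL2}); (2) use this nilHecke action to simplify expressions of the form $\Theta_* * \sF^{(r)}$, from which the equivalence follows (this is \cite{CKL3}). The role of condition (vi) and $\beta$ is precisely to produce the generator $X$ of this algebra, not to ``rigidify decompositions'' directly in the way you describe. Without the nilHecke structure you have no mechanism to control how the Rickard complex interacts with $\sE$'s and $\sF$'s, which is what actually drives the proof of invertibility.
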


This theorem is proved in two steps. In the first step \cite{CKL2} we prove that a geometric categorical $\sl_2$ action induces a strong categorical $\sl_2$ action. Without recalling the precise definition of the latter let us note that its most remarkable property is an action of the nilHecke algebra on $\sE$'s. 

More precisely, in \cite{CKL2} we show that given a geometric categorical $\sl_2$ action one can construct two types of maps 
$$X: \sE \rightarrow \sE [2]\{-2\} \text{ and }
T: \sE * \sE \rightarrow \sE * \sE [-2]\{2\}$$
which satisfy the following relations
\begin{enumerate}
\item $T^2=0$ where $T \in \End(\sE * \sE)$,
\item $(IT)(TI)(IT)=(TI)(IT)(TI)$ where $TI,IT \in \End(\sE * \sE * \sE)$,
\item $(XI)T-T(IX)=I=-(IX)T+T(XI)$ where $XI,IX,T \in \End(\sE * \sE)$.
\end{enumerate}
Recall that $\sE$ and $\sE * \sE$ are just (complexes of) sheaves so $X$ and $T$ are maps of (complexes of) sheaves. If instead we think of the functors induced by $\sE$ and $\sE * \sE$ then $X$ and $T$ are natural transformations of functors. 

In the second step \cite{CKL3} we show that in a strong categorical $\sl_2$ the complex $\Theta_*$ has a unique right convolution which induces an equivalence. The role of the nilHecke algebra is to help simplify expressions of the form $\Theta_* * \sF^{(r)}$. 

Of course, the second step no longer involves any geometry. In fact, a similar result was proved in \cite{CR}. However, their action was on abelian categories and it was not clear how to extend it to triangulated categories. In the end, the proof we give in \cite{CKL3} is fairly different from that in \cite{CR}.  

The maps $X$ and $T$ are examples of higher structure in the representation theory of $\sl_2$. The r\^ole of the nilHecke algebra in the (higher) representation theory of $\sl_2$ is studied in detail by Lauda in \cite{L}. Subsequently, Khovanov-Lauda \cite{KL1,KL2,KL3} and Rouquier \cite{Rnew} describe certain graded algebras now called quiver Hecke algebras or KLR algebras which play the r\^ole for other Lie algebras (such as $\sl_m)$ that the nilHecke plays for $\sl_2$. We will discuss certain categorical $\sl_m$ actions in section \ref{sec:slm} although we do not make any further reference to these KLR algebras. 

\section{Equivalences for the local model of stratified flops}\label{sec:equivlocmod}

\subsection{Categorical actions on $\oplus_k D(T^\star \bG(k,N))$}\label{sec:action}

In this section we fix $N$ and let 
\begin{equation}\label{eq:1}
Y(\l) := T^\star \bG(k,N) \text{ and }
\tY(\l) := \wt{T^\star \bG(k,N)} \text{ where } 
\l = N-2k.
\end{equation}
Consider the correspondences 
$$ W^r(\l) \subset Y(\l-r) \times Y(\l+r) = T^\star \bG(k+r/2, N) \times T^\star \bG(k-r/2, N) $$
defined by
\begin{align*}
W^r(\l) := \{ (X,V,V') : &X \in \End(\C^N), \dim(V) = k + \frac{r}{2}, \dim(V') = k - \frac{r}{2}, \\
& 0 \subset V' \subset V \subset \C^N,  \C^N \xrightarrow{X} V' \text{ and } V \xrightarrow{X} 0 \}.
\end{align*}
There are two natural projections $\pi_1: (X,V,V') \mapsto (X,V)$ and $\pi_2: (X,V,V') \mapsto (X,V')$ from $W^r(\l)$ to $Y(\l-r)$ and $Y(\l+r)$ respectively. Together they give us an embedding
$$(\pi_1, \pi_2): W^r(\l) \subset Y(\l-r) \times Y(\l+r).$$

On $W^r(\l)$ there are two tautological bundles, namely $V := \pi_1^*(V)$ and $V' := \pi_2^*(V)$ where the prime on the $V'$ indicates that the vector bundle is the pullback of the tautological bundle by the second projection. We also have natural inclusions
$$0 \subset V' \subset V \subset \C^N$$
where $\C^N$ denotes the trivial vector bundle on $W^r(\l)$.

Now define kernels 
$$\sE^{(r)}(\l) \in  D(Y(\l-r) \times Y(\l+r)) \text{ and } \sF^{(r)}(\l) \in D(Y(\l+r) \times Y(\l-r))$$
by
\begin{align}
\label{eq:2}
\sE^{(r)}(\l) &:= \O_{W^r(\l)} \otimes \det(\C^N/V)^{-r} \otimes \det(V')^r \{r(N-\l-r)/2\} \\
\label{eq:3}
\sF^{(r)}(\l) &:= \O_{W^r(\l)} \otimes \det(V/V')^{\l} \{r(N+\l-r)/2\}.
\end{align}

In \cite{CKL3} (although most of the hard work is done in \cite{CKL1}) we prove the following:

\begin{theorem}\cite[Thm. 6.1]{CKL3} \label{thm:main2}
The varieties $Y(\l)$ and their deformations $\tY(\l)$ defined in (\ref{eq:1}) together with the functors $\sE^{(r)}(\l)$ and $\sF^{(r)}(\l)$ from (\ref{eq:2}) and (\ref{eq:3}) define a geometric categorical $\sl_2$ action. 
\end{theorem}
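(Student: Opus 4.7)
The plan is to check, one at a time, the seven conditions of Section~3.2 for the data $(Y(\l),\tY(\l),\sE^{(r)}(\l),\sF^{(r)}(\l))$ given above. The first input we need is a good understanding of the correspondence
$$W^r(\l)=\{(X,V,V'):\dim V=k+r/2,\ \dim V'=k-r/2,\ V'\subset V,\ X\C^N\subset V',\ XV=0\}.$$
This projects via $\pi_1$ to $T^\star\bG(k+r/2,N)$ as a Grassmann bundle (the fibre over $(X,V)$ parametrizes $V'\subset V$ with $X\C^N\subset V'$, which depends on $\rank X$), and similarly via $\pi_2$. From this one sees that $W^r(\l)$ is smooth, irreducible and its projections are flat over suitable open strata, enabling all the cohomological computations that follow.

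The plan then is roughly as follows. Condition (i) is immediate since $k=(N-\l)/2$ must lie in $\{0,\dots,N\}$, and the $\C^\times$-action with positive weights on the cotangent fibres makes equivariant $\Hom$ spaces finite dimensional in each graded piece. Condition (ii) is clear from the definition, since each $\sE^{(r)}$ and $\sF^{(r)}$ is a line bundle on the smooth variety $W^r(\l)$, placed in degree zero. Condition (iii) is a Grothendieck duality computation: one determines $\omega_{W^r(\l)}$ relative to the two projections and compares with $\omega_{Y(\l\pm r)}$; the shifts $[r\l]\{-r\l\}$ are exactly the cohomological and equivariant degrees of the relative canonical bundle, and the line-bundle twists in (\ref{eq:2}) and (\ref{eq:3}) are chosen precisely so that $\sE^{(r)}$ and $\sF^{(r)}$ exchange under duality. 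Condition (vii) on supports is a direct check from the explicit description of the image of $W^r$ in $Y(\l)$.

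The substantive part is conditions (iv), (v) and (vi). For (iv) one forms the fibre product $W^1(\l+r)\times_{Y(\l)}W^r(\l-1)$, identifies it with a $\P^r$-bundle over $W^{r+1}(\l)$, and uses a relative Koszul/base-change argument to see that the derived pushforward of the convolution splits as $\sE^{(r+1)}(\l)\otimes H^\star(\P^r)$ at the level of cohomology sheaves; the line-bundle twists must match, and this is where the explicit $\det$-factors in (\ref{eq:2}) are designed to work. Condition (v) is the categorified $\sl_2$-commutator: one forms the two analogous fibre products describing $\sE*\sF$ and $\sF*\sE$, realizes them inside a common ambient variety, and uses a standard excess-intersection/Koszul argument on the residual bundle to obtain the claimed direct summand $\sP$ with $\sH^\ast(\sP)\cong\O_\Delta\otimes H^\star(\P^{-\l-1})$. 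This calculation (carried out in detail in \cite{CKL1}) is the main technical heart of the theorem and is where the hypothesis $2k\le N$ is used. For (vi) one first constructs the class $b\in H^1(Y(\l),T_{Y(\l)}\{-2\})$ from the deformation $\tY(\l)\to\A^1$ defined by (\ref{eq:defGrassdef}), so that the induced map $\beta:\Delta_\ast\O_{Y(\l)}\to\Delta_\ast\O_{Y(\l)}[2]\{-2\}$ exists by HKR, and then checks that the composition $I\beta I$ is nonzero on the appropriate cohomology sheaf $\sH^{-1}$ of $\sE*\sE$.

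The main obstacle is condition (v). All the other conditions reduce to essentially local, fibrewise computations on smooth correspondences, but (v) requires computing a derived self-intersection along a nontrivial correspondence and correctly identifying both the cohomological degrees (the $H^\star(\P^{-\l-1})$) and the line-bundle twists; this is where \cite{CKL1} does almost all the work, and where the choice of $\C^\times$-grading and of the line bundles in (\ref{eq:2})--(\ref{eq:3}) is tightly constrained. Once (v) is established, (iv) and (vi) become essentially bookkeeping, and the theorem follows by assembling the verifications.
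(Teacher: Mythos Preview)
Your outline is essentially correct and matches the approach taken in the references the paper cites. Note, however, that this paper is a survey and gives no proof of this theorem at all: it simply records the statement and defers entirely to \cite{CKL3} (with, as the paper says, ``most of the hard work done in \cite{CKL1}''). So there is nothing in the paper to compare your argument against beyond the informal remarks in Section~\ref{sec:remarks}.

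A couple of small points where your sketch is slightly off. First, $W^r(\l)$ is not a Grassmann bundle over $Y(\l-r)$: as you yourself note parenthetically, the fibre depends on $\rank X$, so the map is only a Grassmann fibration over each stratum. The smoothness of $W^r(\l)$ is better seen from the \emph{other} projection, or by noting that it is a vector bundle over a partial flag variety. Second, your description of how to verify condition (vi) conflates the statement with its interpretation: condition (vi) as written is a direct computation of $\sH^*(i_{23*}\sE * i_{12*}\sE)$, and one checks it by analyzing the convolution in the deformed space $\tY(\l)$; the reformulation via the map $\beta$ and HKR is the \emph{consequence} explained in Section~\ref{sec:remarks}, not the method of verification. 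Finally, the assumption $2k\le N$ is a symmetry-breaking convention for discussing the flop and plays no special role in condition (v): that condition is stated symmetrically for $\l\le 0$ and $\l\ge 0$, and the verification is the same up to swapping $\sE$ and $\sF$.
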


So, as a consequence of Theorem \ref{thm:main1}, this gives us an equivalence
$$\Phi_{\sT(k,N)}: D(T^\star \bG(k,N)) \xrightarrow{\sim} D(T^\star \bG(N-k,N)).$$
In fact, one can show that $\sT(k,N)$ is a sheaf (\cite{CKL2} Prop. 6.6). In the next section we identify $\sT(k,N)$ more explicitly. 

\subsection{The {equivalence}\index{derived equivalences}: an explicit description}

\subsubsection{Some geometry}\label{sec:somegeometry}
Using the above notation $Y(\l) := T^\star \bG(k,N)$ where $\l = N-2k$ recall that the stratified Mukai flop is summarized by the diagram 
$$Y(\l) \xrightarrow{p(k,N)} \overline{B(k,N)} \xleftarrow{p(N-k,N)} Y(-\l).$$
Now consider the fibre product
\begin{align}
Z(k,N) &:= Y(\l) \times_{\overline{B(k,N)}} Y(-\l) \nonumber \\
&= \{ 0 \overset{k}{\underset{N-k}{\rightrightarrows}} \begin{matrix} V \\ V' \end{matrix} \overset{N-k}{\underset{k}{\rightrightarrows}} \C^N : X \C^N \subset V, X \C^N \subset V', XV \subset 0, XV' \subset 0 \}. \nonumber
\end{align}
Since $p(k,N)$ and $p(N-k,N)$ are semi-small, $Z(k,N)$ is equidimensional of dimension $2k(N-k)$. It consists of $(k+1)$ irreducible components $Z_s(k,N)$ ($s = 0, \dots, k)$ where
\begin{equation*}
Z_s(k,N) := \overline{p(k,N)^{-1}(B(k-s,N)) \times_{B(k-s,N)} p(N-k,N)^{-1}(B(k-s,N))}.
\end{equation*}
The component $Z_s(k,N)$ can be described more directly as 
$$\{(X,V,V') \in Z(k,N): \dim (\ker X) \ge N-k+s \text{ and } \dim (V \cap V') \ge k-s \}.$$
It is helpful to keep in mind the following. Any two components $Z_s(k,N)$ and $Z_{s'}(k,N)$ intersect in a divisor if $|s-s'|=1$ but their intersection has strictly higher codimension if $|s-s'| > 1$. 

Now, since $\spn(V,V') \subset \ker X$, it follows that $\dim (\ker X) + \dim (V \cap V') \ge N$ on $Z(k,N)$. We define the open subscheme
$$Z^o(k,N) := \{(X,V,V') \in Z(k,N): N+1 \ge \dim (\ker X) + \dim (V \cap V') \} \subset Z(k,N)$$
and $Z^o_s(k,N) := Z_s(k,N) \cap Z^o(k,N)$. 

\begin{theorem}\cite[Thm. 3.8]{C1} \label{thm:main3}
There exists a $\C^\times$-equivariant line bundle $\sL(k,N)$ on $Z^o(k,N)$ such that $\sT(k,N) \cong i_* j_* \sL(k,N)$ where $i$ and $j$ are the natural inclusions
$$Z^o(k,N) \xrightarrow{j} Z(k,N) \xrightarrow{i} Y(\l) \times Y(-\l).$$
\end{theorem}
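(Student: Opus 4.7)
The plan is to leverage the fact, already established via Theorem~\ref{thm:main2} together with \cite[Prop. 6.6]{CKL2}, that $\sT(k,N)$ is an honest sheaf rather than a genuine complex; the remaining task is to pin down exactly which sheaf. I would proceed in three steps: (a) determine the support of $\sT(k,N)$ as a subscheme of $Y(\l) \times Y(-\l)$; (b) identify its restriction to $Z^o(k,N)$ as a line bundle $\sL(k,N)$; (c) prove that $\sT(k,N)$ coincides with the pushforward $i_* j_* \sL(k,N)$.

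For step (a), I would control supports using the structure of the Rickard complex. Each term $\Theta_s = \sF^{(\l+s)} * \sE^{(s)}[-s]\{s\}$ factors through $Y(\l+2s)$, so its set-theoretic support is contained in the locus
$$\{(X,V,V') : XV \subset 0, \ XV' \subset 0, \ X\C^N \subset V \cap V', \ \dim(V \cap V') \ge k-s\},$$
which sits inside $Z(k,N)$ and equals $Z_s(k,N) \cup Z_{s+1}(k,N) \cup \dots \cup Z_k(k,N)$. Since a right convolution cannot enlarge the union of supports, $\mathrm{supp}(\sT(k,N)) \subseteq Z(k,N)$. To see that every component $Z_s(k,N)$ actually occurs, and with generic multiplicity one, I would compare the class $[\sT(k,N)]$ in $K(Y(\l) \times Y(-\l))$ with the basis furnished by the structure sheaves of the $Z_s$: Theorem~\ref{thm:main1} identifies $[\sT(k,N)]$ with the reflection $t$, and unfolding (\ref{eq:t}) in this basis shows that each component contributes precisely once.

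For step (b), I would work locally at generic points of $Z^o(k,N)$. By definition $Z^o$ is the open subscheme on which at most two consecutive components $Z_s,Z_{s+1}$ meet, and this meeting is divisorial. Over a generic point of $Z_s^o$ only the terms $\Theta_{s}$ and $\Theta_{s-1}$ of $\Theta_*$ are supported, and their contribution to the convolution collapses to a single cone which is visibly a line bundle whose $\C^\times$-weight and determinantal twist are dictated by (\ref{eq:2}) and (\ref{eq:3}). The analogous two-term local computation at a generic point of $Z_s^o \cap Z_{s+1}^o$ then glues these line bundles across the divisor and pins down $\sL(k,N)$ uniquely as a $\C^\times$-equivariant line bundle on all of $Z^o(k,N)$.

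Step (c) is where I expect the main obstacle. Once $\sT(k,N)$ is scheme-theoretically supported in $Z(k,N)$ -- which also requires checking that the ideal $I_Z$ annihilates $\sT(k,N)$, not merely the reduced ideal -- we may write $\sT(k,N) = i_* \sT'$ for a sheaf $\sT'$ on $Z(k,N)$. The content is then the isomorphism $\sT' \cong j_* \sL(k,N)$. The natural adjunction map $\sT' \to j_* j^* \sT' = j_* \sL(k,N)$ is an isomorphism provided $\sT'$ satisfies Serre's $S_2$ condition along $Z \setminus Z^o$ and this complement has codimension at least two in every irreducible component. The codimension statement follows from the fact that higher-order intersections $Z_s \cap Z_{s'}$ with $|s-s'| \ge 2$ drop dimension by at least two. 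The $S_2$ property is the real difficulty: my plan is to exhibit $\sT(k,N)$ as a convolution of Cohen-Macaulay sheaves (each $\O_{W^r}$ twisted by determinants is Cohen-Macaulay since the $W^r$ are) and argue by induction on $k$ that this convolution preserves the Cohen-Macaulay property. Controlling the iterated cones of the Rickard complex so that no higher cohomology sheaves intervene, and thereby keeping the Cohen-Macaulay property intact, is the main technical step.
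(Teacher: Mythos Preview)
Your overall architecture---support, then restriction to $Z^o$, then a Cohen--Macaulay / $S_2$ extension argument---matches the paper's. Steps (a) and (b) are essentially right, though the paper is more direct in (a): \cite[Prop.~6.3]{CKL3} identifies the support of $\sF^{(\l+s)} * \sE^{(s)}$ as exactly the single component $Z_s(k,N)$, so one reads off the support of the convolution without any K-theory comparison.

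The genuine gap is in step (c), your proof of the Cohen--Macaulay property. The plan to show that ``convolution of Cohen--Macaulay sheaves stays Cohen--Macaulay'' and to propagate this through the iterated cones of the Rickard complex is not a workable strategy: neither Fourier--Mukai convolution nor taking cones preserves the CM condition in general, and there is no induction on $k$ that buys you this without essentially redoing the whole computation. The paper sidesteps this entirely with a duality trick. The inverse kernel $\sT(k,N)^{-1}$ is the left adjoint $\sT(k,N)_L$, hence equals $\sT(k,N)^\vee$ tensored with a line bundle and shifted by $\dim Y(-\l)$. But $\sT(k,N)^{-1}$ is itself the left convolution of the adjoint Rickard complex $(\Theta_0)_L \to (\Theta_1)_L \to \cdots$, whose terms $(\Theta_s)_L \cong \sF^{(s)} * \sE^{(\l+s)}[s]\{-s\}$ are again sheaves supported on the components $Z_s$; so by the same argument that shows $\sT(k,N)$ is a sheaf, $\sT(k,N)^{-1}$ is a sheaf. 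Hence $\sT(k,N)^\vee$ is a sheaf up to a shift by $\dim Y(-\l) = \dim Z(k,N)$, which is exactly the statement that $\sT(k,N)$ is (the push-forward of) a Cohen--Macaulay sheaf of the correct dimension. Once you have CM, your $S_2$ extension argument from $Z^o$ goes through as written.
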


Note that the map $j$ in Theorem (\ref{thm:main3}) is an open immersion. Whenever we have an open immersion in this paper $j_*$ denotes the {\it non-derived} push-forward. This is the only case in this paper when a functor is not derived. 

The line bundle $\sL(k,N)$ is uniquely determined by its restriction to each $Z^o_s(k,N)$. One has that $\sL(k,N)|_{Z^o_s(k,N)}$ is isomorphic to  
$$\O_{Z^o_s(k,N)}([D^o_{s,+}(k,N)]) \otimes \det(\C^N/V)^{-s} \otimes \det(V')^s \{k(N-k) - (k-s)^2 + s\}$$
where $D^o_{s,+}(k,N)$ is the divisor $Z_s^o(k,N) \cap Z_{s+1}(k,N)$ and $V,V'$ are the tautological bundles on $Z^o_s(k,N) \subset Y(\l) \times Y(-\l)$ pulled back from $Y(\l)$ and $Y(-\l)$ respectively. 
\subsubsection{Why is Theorem \ref{thm:main3} believable?}

Recall that $\sT(k,N)$ is the right convolution of the complex 
\begin{equation}\label{eq:4}
\sF^{(\l+k)} * \sE^{(k)} [-k]\{k\} \rightarrow \dots \rightarrow \sF^{(\l+1)} * \sE [-1] \{1\} \rightarrow \sF^{(\l)}.
\end{equation}
Now, for $s=0,1, \dots, k$, one can show \cite[Prop. 6.3]{CKL3} that $\sF^{(\l+s)} * \sE^{(s)}$ is a sheaf supported exactly on $Z_s(k,N)$ (actually, one can identify this sheaf explicitly). It then follows quite easily that the convolution in (\ref{eq:4}) above is also a sheaf supported exactly on $\bigcup_s Z_s(k,N) = Z(k,N)$. 

So we just need to identify this sheaf. To do this we first argue that $\sT(k,N)$ is (the push-forward by a closed embedding of) a Cohen-Macaulay sheaf. This is done by identifying the kernel which is the inverse of $\sT(k,N)$ (see section \ref{sec:inverse}) and showing by the same argument above that it is also a sheaf. Formal non-sense says that the inverse kernel is just $\sT(k,N)^\vee$ tensored with some line bundle and a shift. This means $\sT(k,N)^\vee$ is a sheaf and hence $\sT(k,N)$ is (the push-forward of) a Cohen-Macaulay sheaf.

Finally, any Cohen-Macaulay sheaf is uniquely determined by its restriction to an open subset of codimension at least two. The last step is to identify the restriction of $\sT(k,N)$ to $Z^o(k,N)$ which is codimension two inside $Z(k,N)$ (we do this in \cite{C1}). The advantage of $Z^o(k,N)$ over $Z(k,N)$ is that two components $Z_s^o(k,N)$ and $Z_{s'}^o(k,N)$ in $Z^o(k,N)$ intersect in a Cartier divisor if $|s-s'|=1$ and are {\it disjoint} if $|s-s'|>1$. So $Z^o(k,N)$ avoids all the nastier singularities of $Z(k,N)$. 

\subsection{The inverse}\label{sec:inverse}

The inverse $\sT(k,N)^{-1}$ of $\sT(k,N)$ is given by its left (or equivalently its right) adjoint. This is equal to the left convolution of the complex
$$(\Theta_0)_L \rightarrow (\Theta_1)_L \rightarrow \dots \rightarrow (\Theta_{s-1})_L \rightarrow (\Theta_{s})_L \rightarrow \dots$$
where it is easy to check \cite[Sec. 5.1]{C1} that 
$$(\Theta_s)_L \cong \sF^{(s)} * \sE^{(N-2k+s)} [s] \{ -s \}.$$
It then follows that $\sT(k,N)^{-1}$ is again a sheaf which is the push-forward of a line bundle from an open subset of $Z(k,N)$ \cite[Thm. 5.3]{C1}. Perhaps a little more surprising:

\begin{proposition}\cite[Cor. 7.5]{CKL4}\label{prop:main2}
The kernels $\sT(k,N)^{-1}$ and $\sT(N-k,N)$ are related by
$$\sT(k,N)^{-1} \cong \sT(N-k,N) \otimes \det(V)^{-1} \otimes \det(V')^{-1} \{N-2k\} \in D(Y(-\l) \times Y(\l)).$$
\end{proposition}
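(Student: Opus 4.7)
Both sides of the claimed isomorphism are $\C^\times$-equivariant Cohen-Macaulay sheaves on $Y(-\l) \times Y(\l)$ supported on the subscheme $Z := Z(N-k,N)$; equivalently, $Z$ is the swap of $Z(k,N) \subset Y(\l) \times Y(-\l)$. By the codimension-two principle sketched at the end of Section \ref{sec:somegeometry}, it suffices to produce an isomorphism after restricting to the open subset $Z^o$.

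First, apply Theorem \ref{thm:main3} with $N-k$ in place of $k$ to realize $\sT(N-k,N)$ as the push-forward of an explicit line bundle $\sL(N-k,N)$ on $Z^o$. Under the bijection of components $Z^o_s(k,N) \leftrightarrow Z^o_{s+(N-2k)}(N-k,N)$, obtained by factor-swap and matching the defining inequalities on $\dim \ker X$ and $\dim(V \cap V')$, substituting $s' = s + (N-2k)$ into the formula of Theorem \ref{thm:main3} writes $\sL(N-k,N)|_{Z^o_s}$ in closed form in terms of the tautological bundles on $Y(-\l)$ (rank $N-k$) and $Y(\l)$ (rank $k$).

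Second, by the discussion at the start of Section \ref{sec:inverse}, $\sT(k,N)^{-1}$ is the left convolution of the complex with terms $(\Theta_s)_L \cong \sF^{(s)} * \sE^{(N-2k+s)}[s]\{-s\}$, and is similarly the push-forward of a line bundle from an open subset of $Z$. Identifying this line bundle on each $Z^o_s$ follows the strategy used to identify $\sL(k,N)$ in Theorem \ref{thm:main3}: each $(\Theta_s)_L$ is a sheaf supported exactly on $Z_s$, the convolution is a Cohen-Macaulay sheaf on $Z$, and its restriction to $Z^o$ can be pinned down because the components $Z^o_s$ and $Z^o_{s'}$ are disjoint whenever $|s-s'|>1$.

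Comparing the two explicit line bundles on each $Z^o_s$, the divisor twists $\O([D^o_{s,+}])$ cancel (both factors encode the same codimension-one boundary $Z_s \cap Z_{s+1}$ of $Z^o_s$), and the surviving determinant-plus-$\C^\times$-shift contributions should collapse to $\det(V)^{-1} \otimes \det(V')^{-1}\{N-2k\}$, a line bundle pulled back from $Y(-\l) \times Y(\l)$ and therefore independent of $s$ as required. The main obstacle is Step 2: the explicit identification of $\sT(k,N)^{-1}|_{Z^o_s}$ requires rerunning the proof of Theorem \ref{thm:main3} with $(\Theta_s)_L$ in place of $\Theta_s$, carefully tracking the reversed cohomological shifts and $\C^\times$-weights. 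Once done, the final comparison reduces to a routine determinant-and-shift bookkeeping on each component.
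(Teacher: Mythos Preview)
The paper does not actually prove this proposition; it is quoted from \cite[Cor.~7.5]{CKL4}, and the only hint in the present paper about how the argument goes appears later, where it is remarked that the affine braid relation $T_i = \theta_{i-1}^{-1}\theta_{i+1}^{-1}\theta_i T_i^{-1}\theta_i$ of Corollary~\ref{cor:main3} ``generalizes the result in Proposition~\ref{prop:main2}''. In other words, in \cite{CKL4} the statement is obtained as the $m=2$ case of a relation valid for all quiver varieties: one proves $\sT_i * \theta_i \cong \theta_{i-1}\theta_{i+1} * \sT_i^{-1} * \theta_i$ at the level of kernels, and for $m=2$ the line bundles $\theta_i = \Delta_*\det(V_i)$ specialise exactly to the twist $\det(V)^{-1}\otimes\det(V')^{-1}$ appearing here.

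Your approach is genuinely different: rather than invoking the affine braid group, you propose to compare the two kernels directly via their explicit Cohen--Macaulay sheaf descriptions on $Z^o$. This is in the spirit of \cite{C1} and is in principle workable, since \cite[Thm.~5.3]{C1} does give $\sT(k,N)^{-1}$ as the push-forward of an explicit line bundle on $Z^o$. The advantage of your route is that it is self-contained within the local model and does not require the quiver-variety machinery; the advantage of the \cite{CKL4} route is that it proves a much more general statement in one stroke and explains conceptually \emph{why} the twist is by a determinant line bundle (it is the lattice part of the affine braid group).

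One caution about your plan: the bijection $Z^o_s(k,N)\leftrightarrow Z^o_{s+(N-2k)}(N-k,N)$ you write down does not make sense as stated, since both $Z(k,N)$ and its swap have exactly $k+1$ components indexed by $s=0,\dots,k$; the swap simply identifies $Z_s(k,N)$ with $Z_s(N-k,N)$ under the natural factor-exchange (the defining conditions on $\dim(\ker X)$ and $\dim(V\cap V')$ are symmetric in $V,V'$). You will need to redo the index-matching before the determinant bookkeeping goes through. Apart from that, the strategy is sound, though the ``routine'' final comparison is where all the content lies and is not carried out here.
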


This isomorphism is something which is special to the example of cotangent bundles of Grassmannians. In other words, such a relation is not a formal consequence of having a geometric categorical $\sl_2$ action.

\subsection{The equivalence: stratified Atiyah flops}

Recall the notation $\tY(\l) := \wt{T^\star \bG(k,N)}$ which comes equipped with a map $\tY(\l) \rightarrow \A^1$. Similar to the above description of $\sT(k,N)$ we now proceed to describe a kernel $\tilde{\sT}(k,N)$ which induces an equivalence $\Phi_{\tilde{\sT}(k,N)}: D(\tY(\l)) \xrightarrow{\sim} D(\tY(-\l))$. 

\subsubsection{Some geometry}
Recall that the stratified Atiyah flop is summarized by the diagram
$$\tY(\l) \xrightarrow{\tp(k,N)} \overline{\tB(k,N)} \cong \overline{\tB(N-k,N)} \xleftarrow{\tp(N-k)} \tY(-\l).$$
Once again we can consider the fibre product
\begin{align}
\tZ(k,N) &:= \tY(\l) \times_{\overline{\tB(k)}} \tY(-\l) \nonumber \\
&= \{ 0 \overset{k}{\underset{N-k}{\rightrightarrows}} \begin{matrix} V \\ V' \end{matrix} \overset{N-k}{\underset{k}{\rightrightarrows}} \C^N : X \C^N \subset V, (X - x \cdot \id) \C^N \subset V' \nonumber \\
& (X - x \cdot \id) V \subset 0, XV' \subset 0 \}\nonumber
\end{align}
which deforms the old fibre product $Z(k,N)$. However, unlike $Z(k,N)$, $\tZ(k,N)$ is now irreducible. Notice that $\tZ(k,N)$ is naturally a subscheme of $\tY(\l) \times_{\A^1} \tY(-\l)$ (where the second projection $\tY(-\l) \rightarrow \A^1$ maps $(X,V,x) \mapsto -x$).

Next, as before, we can define an open subscheme $\tZ^o(k,N)$ as follows
$$\{(X,V,V',x) \in \tZ(k,N): N+1 \ge \dim (\ker(X - x \cdot \id)) + \dim (V \cap V') \} \subset \tZ(k,N).$$
Notice that if $x \ne 0$ then $V$ and $V'$ are uniquely determined by $X$ as the kernels of $(X-x \cdot \id)$ and $X$ respectively. This means that $\tZ^o(k,N)$ contains all the fibres over $x \ne 0$ since when $x \ne 0$ we have $V \cap V' = 0$. 

Inside $\tZ^o(k,N)$ we have $Z^o_s(k,N)$, the components of the central fibre. One can check that these are Cartier divisors.  

\begin{theorem}\cite[Thm. 4.1]{C1}\label{thm:main4}
Consider the line bundle on $\tZ^o(k,N)$ given by
$$\tilde{\sL}(k,N) := \O_{\tZ^o(k,N)}(\sum_{s=0}^k \binom{s+1}{2} [Z^o_s(k,N)]) \{k(N-2k)\}$$ 
and let 
$$\tilde{\sT}(k,N) := \tilde{i}_* \tilde{j}_* \tilde{\sL}(k,N) \in D(\tY(\l) \times_{\A^1} \tY(-\l))$$
where $\tilde{i}$ and $\tilde{j}$ are the natural inclusions 
$$\tZ^o(k,N) \xrightarrow{\tilde{j}} \tZ(k,N) \xrightarrow{\tilde{i}} \tY(\l) \times_{\A^1} \tY(-\l).$$
Then $\Phi_{\tilde{\sT}(k,N)}: D(\tY(\l)) \xrightarrow{\sim} D(\tY(-\l))$ is an equivalence and the restriction of $\tilde{\sT}(k,N)$ to $Y(\l) \times Y(-\l)$ is $\sT(k,N)$.
\end{theorem}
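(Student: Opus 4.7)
The theorem has two parts: (a) $\Phi_{\tilde{\sT}(k,N)}$ is an equivalence, and (b) the restriction of $\tilde{\sT}(k,N)$ to the central fibre $Y(\l)\times Y(-\l)$ coincides with $\sT(k,N)$ of Theorem \ref{thm:main3}. I would prove (b) first and bootstrap to (a).

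For (b), the plan is a direct adjunction computation on the flat family $\tZ^o(k,N)\to\A^1$. The central fibre is $\bigcup_s Z^o_s(k,N)$; each component is Cartier in $\tZ^o(k,N)$, adjacent components meet in the Cartier divisors $D^o_{s,\pm}\subset Z^o_s(k,N)$, and non-adjacent components are disjoint inside $\tZ^o(k,N)$. Because $\sum_s[Z^o_s(k,N)]$ is the pullback of the origin of $\A^1$, it is trivial in $\mathrm{Pic}(\tZ^o(k,N))$; in particular the normal bundle satisfies $N_{Z^o_{s_0}/\tZ^o}\cong\O(-[D^o_{s_0,+}]-[D^o_{s_0,-}])$. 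Restricting $\tilde{\sL}(k,N)=\O(\sum_s\binom{s+1}{2}[Z^o_s(k,N)])\{k(N-2k)\}$ to a component $Z^o_{s_0}(k,N)$ via this adjunction, and invoking the binomial identities $\binom{s_0+2}{2}-\binom{s_0+1}{2}=s_0+1$ and $\binom{s_0}{2}-\binom{s_0+1}{2}=-s_0$, yields a line bundle of the form $\O((s_0+1)[D^o_{s_0,+}]-s_0[D^o_{s_0,-}])$. The remaining task is to identify the combination $s_0([D^o_{s_0,+}]-[D^o_{s_0,-}])$ with the determinantal twist $\det(\C^N/V)^{-s_0}\otimes\det(V')^{s_0}$, and to match the shift $\{k(N-k)-(k-s_0)^2+s_0\}$ by tracking the $\C^\times$-weights along the restriction.

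For (a), I would argue fibrewise over $\A^1$. Over the open locus $\{x\ne 0\}$ the affinizations $\tp(k,N)$ and $\tp(N-k,N)$ are isomorphisms and $(X,x)\mapsto(X-x\cdot\id,-x)$ identifies $\tY(\l)|_{x\ne 0}$ with $\tY(-\l)|_{-x\ne 0}$. The subscheme $\tZ^o(k,N)|_{x\ne 0}$ is precisely the graph of this identification and avoids every $Z^o_s(k,N)$; hence $\tilde{\sL}(k,N)$ restricts to a $\C^\times$-graded twist of the structure sheaf of the graph, so the induced functor on the generic fibres is manifestly an equivalence. Over $\{x=0\}$ the restriction is $\sT(k,N)$ by (b), which is an equivalence via Theorems \ref{thm:main1} and \ref{thm:main2}. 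A standard cohomology-and-base-change argument then upgrades fibrewise equivalence to a global equivalence, using that $\tZ(k,N)$ is proper over both $\tY(\l)$ and $\tY(-\l)$ and that the base $\A^1$ is flat underneath everything.

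The main obstacle is the adjunction bookkeeping in (b): the weights $\binom{s+1}{2}$ and the global shift $\{k(N-2k)\}$ in $\tilde{\sL}(k,N)$ are engineered precisely so that, after using $\sum_s[Z^o_s]\sim 0$ together with an explicit linear-equivalence relation on $Z^o_{s_0}(k,N)$ expressing $[D^o_{s_0,+}]-[D^o_{s_0,-}]$ in terms of $\det$-bundles, the restriction matches $\sL(k,N)|_{Z^o_{s_0}(k,N)}$ on the nose and not merely up to some residual ambiguity in $\mathrm{Pic}$. Pinning down those relations and the associated $\C^\times$-shifts requires care, but no further conceptual input; once (b) is established, the equivalence assertion in (a) is essentially formal.
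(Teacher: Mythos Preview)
Your proposal is correct and follows essentially the same strategy as the paper, which (being a survey) only sketches the argument: one guesses the expression for $\tilde{\sT}(k,N)$, checks that it restricts to $\sT(k,N)$ on the central fibre and to the graph of an isomorphism on the generic fibres, and then deduces the global equivalence from the fibrewise ones. Your binomial computation for the restriction of $\tilde{\sL}(k,N)$ to each $Z^o_{s_0}(k,N)$, together with the identification of $[D^o_{s_0,+}]-[D^o_{s_0,-}]$ with the determinantal twist, is precisely the mechanism by which the coefficients $\binom{s+1}{2}$ were engineered in \cite{C1}.
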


The map $\tilde{j}$ above is an open immersion so again $\tilde{j}_*$ denotes the {\it non-derived} push-forward. There is no categorical $\sl_2$ action on $\oplus_{\l} D(\tY(\l))$ so to prove Theorem \ref{thm:main4} one guesses the expression for $\tilde{\sT}(k,N)$ and shows that it restricts to an equivalence over each fibre of the map to $\A^1$. Hence Theorem \ref{thm:main4} is essentially a corollary of Theorem \ref{thm:main3}. 

\subsubsection{The case $k=1$.}
The central fibre of $\tZ(1,N)$ contains two components $Z_0(1,N)$ and $Z_1(1,N)$ while
$$\tilde{\sT}(1,N) \cong \tilde{i}_* \tilde{j}_* \O_{\tZ^o(1,N)}([Z^o_1(1,N)]) \{N-2\}.$$
Now, one can show \cite[Lem. 5.1]{C1} that
\begin{equation}\label{eq:6}
\tilde{i}_* \tilde{j}_* \O_{\tZ^o(k,N)}(\sum_{s=1}^k s[Z_s^o(k,N)]) \cong \O_{\tZ(k,N)} \otimes \det(\C^N/V)^\vee \otimes \det(V') \{2k\}
\end{equation}
for any $k$. When $k=1$ this means that 
$$\tilde{\sT}(1,N) \cong \O_{\tZ(1,N)} \otimes \det(\C^N/V)^\vee \otimes \det(V') \{N\}.$$
In particular this implies:
\begin{corollary}\label{cor:main1}
The fibre product correspondences $Z(1,N)$ and $\tZ(1,N)$ induce equivalences 
$$\Phi_{\O_{Z(1,N)}}: D(Y(1)) \xrightarrow{\sim} D(Y(N-1)) \text{ and } 
\Phi_{\O_{\tZ(1,N)}}: D(\tY(1)) \xrightarrow{\sim} D(\tY(N-1)).$$
\end{corollary}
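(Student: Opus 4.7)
The plan is to deduce the corollary directly from the explicit formulas for $\tilde{\sT}(1,N)$ and $\sT(1,N)$ already assembled in Theorems \ref{thm:main4} and \ref{thm:main3}. The key observation is that tensoring a Fourier-Mukai kernel with a line bundle pulled back from either factor (together with a grading shift) induces only a composition of autoequivalences on the associated transforms, and therefore does not affect whether the transform is an equivalence.

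\textbf{Deformed case.} When $k=1$ the sum $\sum_{s}\binom{s+1}{2}[Z^o_s(k,N)]$ defining $\tilde{\sL}(k,N)$ collapses to the single term $[Z^o_1(1,N)]$, so $\tilde{\sT}(1,N)\cong \tilde{i}_*\tilde{j}_*\O_{\tZ^o(1,N)}([Z^o_1(1,N)])\{N-2\}$. Specializing identity (\ref{eq:6}) to $k=1$ then yields the formula displayed just above the corollary, namely
\[
\tilde{\sT}(1,N)\;\cong\;\O_{\tZ(1,N)}\otimes \pi_1^*\det(\C^N/V)^\vee\otimes \pi_2^*\det(V')\{N\},
\]
where $\pi_1,\pi_2$ are the two projections from $\tY(1)\times_{\A^1}\tY(N-1)$. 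Using the projection formula and the definition $\Phi_\sK(F)=\pi_{2*}(\pi_1^*F\otimes \sK)$, one computes $\Phi_{\sK\otimes\pi_1^*L_1\otimes\pi_2^*L_2}=(L_2\otimes -)\circ\Phi_\sK\circ(L_1\otimes -)$, so $\Phi_{\O_{\tZ(1,N)}}$ differs from $\Phi_{\tilde{\sT}(1,N)}$ by composition on both sides with autoequivalences (and a grading shift). Theorem \ref{thm:main4} guarantees that the latter is an equivalence, so the former is as well.

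\textbf{Undeformed case and main obstacle.} Two routes are available. One can repeat the argument using Theorem \ref{thm:main3}: for $k=1$ the divisor $D^o_{1,+}(1,N)$ is empty (there is no component $Z_2(1,N)$), the description of $\sL(1,N)$ collapses to a single boundary contribution on $Z^o_0(1,N)$, and the undeformed analogue of (\ref{eq:6}) gives $\sT(1,N)\cong\O_{Z(1,N)}\otimes \pi_1^*\det(\C^N/V)^\vee\otimes\pi_2^*\det(V')\{N\}$, after which the same autoequivalence argument applies. Alternatively, since Theorem \ref{thm:main4} records that $\tilde{\sT}(1,N)$ restricts along $Y(1)\times Y(N-1)\hookrightarrow \tY(1)\times_{\A^1}\tY(N-1)$ to $\sT(1,N)$, one can restrict the deformed formula to the central fibre, using flatness of $\tZ(1,N)\to\A^1$ to identify the derived restriction of $\O_{\tZ(1,N)}$ with $\O_{Z(1,N)}$. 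The non-trivial input in either route is identity (\ref{eq:6}) and its undeformed counterpart; this is precisely where the $k=1$ hypothesis is essential, because only for $k=1$ is the twist divisor $\sum_s\binom{s+1}{2}[Z^o_s]$ supported on a single component and hence cleanly absorbable into line bundles pulled back from the two factors. No analogous simplification, and hence no analogous corollary, should be expected for $k>1$.
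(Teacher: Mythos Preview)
Your argument is correct and follows exactly the route the paper takes: specialize the divisor in the definition of $\tilde{\sL}(k,N)$ to $k=1$, apply identity~(\ref{eq:6}) to identify $\tilde{\sT}(1,N)$ with $\O_{\tZ(1,N)}$ twisted by line bundles pulled back from the two factors, and conclude since such twists are autoequivalences. The paper records this as an immediate consequence (``In particular this implies'') of the displayed formula for $\tilde{\sT}(1,N)$; you have simply spelled out the projection-formula step and the passage to the central fibre more explicitly than the paper does.
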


This corollary was originally proved by Kawamata \cite{K1} and Namikawa \cite{Nam1}. Namikawa \cite[Sec. 2]{Nam1} also shows that the correspondence 
\begin{equation*}
\xymatrix{
& W \ar[dl]_{\pi_1} \ar[dr]^{\pi_2} & \\
Y(1) & & Y(N-1) 
}
\end{equation*}
where 
$$W = \{(X,V,V'): X \in \End(\C^N), 0 \xrightarrow{1} V \xrightarrow{N-2} V' \xrightarrow{1} \C^N, X\C^N \subset V, XV' = 0\}$$
does {\it not} induce an equivalence. This correspondence is natural since it is isomorphic to the blowup of the zero section of $Y(1) = T^\star \bG(1,N)$ and of $Y(N-1) = T^\star \bG(N-1,N)$. 

From the point of view of categorical $\sl_2$ actions, $\Phi_{\O_W}$ is not an equivalence because $\O_W$ is equal to $\sF^{(N-1)}(0) \in D(Y(1) \times Y(N-1))$ (up to tensoring by a line bundle). This means that the composition $(\O_W)_L * \O_W$ is equal to 
\begin{equation}\label{eq:5}
\sE^{(N-1)}(0) * \sF^{(N-1)}(0) \cong \O_{\Delta} \oplus \sF * \sE \otimes_\C H^\star(\P^{\l-1})
\end{equation}
up to tensoring by a line bundle (this is clearly not equal to $\O_{\Delta}$). Note that relation (\ref{eq:5}) above is a formal consequence of having a categorical $\sl_2$ action (\cite[Lemma 4.2]{CKL3}).

\subsubsection{The case $k=2$.}\label{sec:G(2,4)}
The argument above, namely twisting by the line bundle in (\ref{eq:6}), does {\it not} work here to imply that $\Phi_{\O_{\tZ(2,N)}}$ is an equivalence. Perhaps even more surprising is Namikawa's result \cite{Nam2} that 
$$\Phi_{\O_{Z(2,4)}}: D(Y(2,4)) \rightarrow D(Y(2,4)) \text{ and }
\Phi_{\O_{\tZ(2,4)}}: D(\tY(2,4)) \rightarrow D(\tY(2,4))$$ 
are {\it not} equivalences. 

In \cite{K2} Kawamata tried to tweak the kernel $\O_{\tZ(2,N)}$ to obtain an equivalence. He defined functors $\Psi$ and $\Phi$ as follows. Inside $\tY(2,N)$ there are two natural strata: namely the locus where $X=0$ (isomorphic to $\bG(2,N)$) and the locus where $\rank X \le 1$ (the locus $\rank X \le 2$ is the whole central fibre $Y(2,N)$). Kawamata blows up the first locus and then the strict transform of the second locus to obtain
$$\tY''(2,N) \xrightarrow{f_1} \tY'(2,N) \xrightarrow{f_2} \tY(2,N).$$
Inside $\tY''(2,N)$ we denote by $E_1$ the exceptional divisor of $f_1$ and by $E_2$ the strict transform of the exceptional divisor of $f_2$. Warning: our labeling of divisors does not match precisely that in \cite{K2}. 

Kawamata then blows up $\tY(N-2,N)$ in the same way to obtain $\tY''(N-2,N)$ and identifies this smooth variety with $\tY''(2,N)$. To summarize, we arrive at the following commutative diagram
\begin{equation}\label{eq:kawamata}
\xymatrix{
& \tY''(2,N) \cong \tY''(N-2,N) \ar[dl]_{f} \ar[dr]^{f^+} \ar[d]^{\pi} & \\
\tY(2,N) \ar[rd] & \tZ(2,N) \ar[l]_{\pi_1} \ar[r]^{\pi_2} & \tY(N-2,N) \ar[ld] \\
& \overline{\tB(2,N)}. &
}
\end{equation}
Notice that $\tZ(2,N) = \tY(2,N) \times_{\overline{\tB(2,N)}} \tY(N-2,N)$ so the map $\pi$ exists by the universal property of fibre products. The functors $\Psi$ and $\Phi$ are then defined by
\begin{align}
\Psi(\cdot) &:= f_*(f^{+*}(\cdot) \otimes \O_{\tY''(2,N)}([E_2])) \nonumber \\
\Phi(\cdot) &:= f^+_*(f^{*}(\cdot) \otimes \O_{\tY''(2,N)}((2N-5)[E_2] + (N-3)[E_1])). \nonumber
\end{align}

If we ignore the $\{\cdot\}$ shift for convenience we have:

\begin{proposition}\cite[Prop. 5.7]{C1}\label{prop:main1}
The functor induced by the kernel 
$$\tilde{i}_* \tilde{j}_* \tilde{\sL}(2,N) \otimes \det(\C^N/V) \otimes \det(V')^\vee$$ 
is an isomorphism and together with its adjoint are equal to Kawamata's functors 
$$\Psi, \Phi: D(\tY(2,N)) \rightarrow D(\tY(N-2,N)).$$
\end{proposition}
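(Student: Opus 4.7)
The proposition asserts two things: (a) the proposed kernel induces an equivalence, and (b) this equivalence together with its adjoint coincide with Kawamata's $\Psi$ and $\Phi$. Part (a) is essentially immediate from Theorem \ref{thm:main4}: that result says $\tilde{i}_* \tilde{j}_* \tilde{\sL}(2,N)$ already defines an equivalence, and the additional twist by $\det(\C^N/V) \otimes \det(V')^\vee$ is by pullbacks of line bundles from the two factors $\tY(\l)$ and $\tY(-\l)$, since $V$ on $\tZ(2,N)$ is the pullback of the tautological bundle from $\tY(\l)$ and $V'$ the pullback from $\tY(-\l)$. Tensoring an FM kernel with pullbacks of line bundles corresponds to pre- and post-composition with line-bundle twists, which preserves equivalences.

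For (b), the FM kernel of Kawamata's $\Psi$ is
$$\sK_\Psi \;=\; (f, f^+)_* \, \O_{\tY''(2,N)}([E_2]).$$
Diagram (\ref{eq:kawamata}) factors $(f, f^+)$ as $\tilde{i} \circ \pi$, reducing the task to showing
$$\pi_* \, \O_{\tY''(2,N)}([E_2]) \;\cong\; \tilde{j}_* \bigl( \tilde{\sL}(2,N) \otimes \det(\C^N/V) \otimes \det(V')^\vee \bigr)$$
as sheaves on $\tZ(2,N)$. The plan proceeds in three steps. First, verify $R^{>0}\pi_* \O_{\tY''(2,N)}([E_2]) = 0$, a local question about the exceptional fibres of the iterated blow-up $\pi$. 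Second, argue that the resulting coherent sheaf is reflexive (as the proper birational push-forward of a line bundle from a smooth variety with mild singularities downstairs), so it is determined by its restriction to the codimension-two open $\tZ^o(2,N)$ via the non-derived $\tilde{j}_*$. Third, match the two line bundles on the smooth open $\tZ^o(2,N)$ by expressing the restriction of $[E_2]$ via $\pi$ as an explicit integer combination of the Cartier divisors $[Z^o_s(2,N)]$ and chasing tautological bundle twists.

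The main obstacle is step three: the divisorial chase through the iterated blow-up $\tY''(2,N) \to \tY'(2,N) \to \tY(2,N)$ and down through $\pi$ to $\tZ^o(2,N)$. One must produce precisely the coefficients $\binom{s+1}{2} = 0, 1, 3$ defining $\tilde{\sL}(2,N)$ from the divisor class $[E_2]$, with the discrepancy absorbed into the twist $\det(\C^N/V) \otimes \det(V')^\vee$. Once $\Psi$ is identified, its adjoint kernel is computed formally via Grothendieck--Serre duality along $(f, f^+)$, producing $\O_{\tY''(2,N)}([E_2])^\vee$ tensored with the relative dualizing sheaf; inserting the canonical bundle formulas for the blow-ups $f_1$ and $f_2$, whose exceptional divisors are $E_1$ and (the strict transform of) the second blow-up locus, should recover Kawamata's coefficients $2N-5$ and $N-3$ and complete the identification with $\Phi$.
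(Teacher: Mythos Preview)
The paper does not prove this proposition; it is stated with a citation to \cite[Prop.~5.7]{C1} and no argument is given here. So there is no proof in this paper to compare your proposal against.

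That said, your outline is consistent with the methodology the paper sketches elsewhere. Part (a) is exactly right: Theorem~\ref{thm:main4} gives the equivalence, and tensoring by line bundles pulled back from the two factors preserves it. For part (b), your three-step plan (vanishing of higher $\pi_*$, reduction to a codimension-two open, then a divisor computation) mirrors the strategy explained after Theorem~\ref{thm:main3} for identifying $\sT(k,N)$. One caution on step two: ``reflexive'' is not quite the right word, since $\tZ(2,N)$ is not normal (it is reducible over $x=0$). What one actually wants, as in the discussion following Theorem~\ref{thm:main3}, is that the pushforward is (the $\tilde{i}_*$ of) a Cohen--Macaulay sheaf on its support; that is what guarantees it is determined by its restriction to the codimension-two open $\tZ^o(2,N)$. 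You should phrase the argument in those terms rather than appeal to reflexivity on a non-normal scheme.

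Step three is, as you say, the real content: tracking how the exceptional divisors $E_1, E_2$ on $\tY''(2,N)$ map under $\pi$ to combinations of the $[Z^o_s(2,N)]$ and tautological line bundles. This is a concrete computation in the geometry of the iterated blow-up, and your expectation that the coefficients $0,1,3$ emerge (with the discrepancy absorbed by $\det(\C^N/V)\otimes\det(V')^\vee$, compare equation~(\ref{eq:6})) is the right shape. The identification of $\Phi$ with the adjoint via relative duality and the canonical bundle formula for blow-ups is also the natural route.
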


\subsection{Equivalences and K-theory}

Namikawa's proof that $\Phi_{\O_{\tZ(2,4)}}$ is not an equivalence is via an impressive calculation of $\Hom$ spaces which implies that it is not fully-faithful. It would be interesting to have a more conceptual explanation of this fact. Moreover, it is not known if $\Phi_{\O_{\tZ(k,N)}}$ fails to be an equivalence for any $k \ne 1$ and $N \ge 2k$ (although one expects this is the case).  

Namikawa also shows \cite[Thm. 2.6 and Thm. 2.7]{Nam2} that on K-theory we {\it do} have isomorphisms 
$$[\Phi_{\O_{Z(k,N)}}]: K(Y(\l)) \xrightarrow{\sim} K(Y(-\l)) \text{ and } [\Phi_{\O_{\tZ(k,N)}}]: K(\tY(\l)) \xrightarrow{\sim} K(\tY(-\l))$$
where $K(X)$ denotes the usual Grothendieck group of coherent sheaves on a variety $X$. This fact is a consequence of specialization in K-theory. The argument is as follows. 

For simplicity let us ignore the $\C^\times$-action for a moment. We follow the notation of Chriss and Ginzburg \cite{CG}. Suppose $\tY \rightarrow \A^1$ is a flat family with central fibre $Y$ and denote $\tY^* := \tY \setminus Y$. Then \cite[Sec. 5.3]{CG} they describe a specialization map in K-theory
$$\lim_{t \rightarrow 0}: K(\tY^*) \rightarrow K(Y).$$
More precisely, they show that given any sheaf $\sP^*$ on $\tY^*$ there exists a sheaf $\sP$ on $\tY$ such that :
\begin{itemize}
\item $\sP$ restricts to $\sP^*$ on $\tY^*$
\item $\sP$ has no subsheaves supported on $Y$.
\end{itemize}
Moreover, they show that for any two such sheaves $\sP_1$ and $\sP_2$ their restriction to $K(Y)$ are the same. This restriction is by definition $\lim_{t \rightarrow 0} \sP^*$. 

Now let us apply this to $\tY := \tY(\l) \times_{\A^1} \tY(-\l)$ where $\sP^* = \O_{\tZ(k,N)}|_{\tY^*}$. Then $\sP = \O_{\tZ(k,N)}$ is a possible choice for the extension which means that 
$$\lim_{t \rightarrow 0} \sP^* = [\O_{Z(k,N)}].$$
On the other hand, $\tZ(k,N)$ restricted to $\tY^*$ is actually the graph of an isomorphism $\tY(\l)^* \xrightarrow{\sim} \tY(-\l)^*$ where the $*$ indicates the complement of the central fibre. This immediately implies that $[\sP^*]$ is invertible in K-theory, i.e. $[(\sP^*)_L] * [\sP^*] \cong [\O_{\tilde{\Delta}^*}]$.

Now, in \cite[Thm. 5.3.9]{CG} they also show that the specialization map is compatible with convolution. Since $\lim_{t \rightarrow 0} [\O_{\tilde{\Delta}^*}] = [\O_{\Delta}]$ this implies that $[(\O_{Z(k,N)})_L] * [\O_{Z(k,N)}] \cong [\O_{\Delta}]$ which means that $[\Phi_{\O_{Z(k,N)}}]: K(Y(\l)) \xrightarrow{\sim} K(Y(-\l))$ is invertible. It then follows that $[\Phi_{\O_{\tZ(k,N)}}]$ is also invertible. 

Since the restriction of the line bundle $\tilde{\sL}(k,N)$ from Theorem \ref{thm:main4} to the general fibre is trivial this also means that in K-theory 
$$[\sT(k,N)] = [\O_{Z(k,N)}] \in K(Y(\l) \times Y(-\l)).$$
But when $k=2, N=4$ we know that $\sT(k,N)$ induces an equivalence while $\O_{Z(k,N)}$ does not. In particular, there is no natural specialization map at the level of categories which lifts the one on K-theory. This is unfortunate as it makes it difficult to use deformations to construct and prove derived equivalences. 

\begin{remark} Maulik and Okounkov have been working on a more functorial specialization map in K-theory. It is possible their work can help define a reasonable specialization map at the level of derived categories.
\end{remark}

\section{Geometric categorical $\sl_m$ actions}\label{sec:slm}

One can define the concept of a {\em geometric categorical $\g$ action}\index{geometric categorical ${\mathfrak{g}}$ action} for any simply laced {Kac-Moody Lie algebra}\index{Kac-Moody Lie algebra} $\g$ \cite[Sec. 2.2.2]{CK3}. Let us summarize the definition when $\g = \sl_m$. 

The weight lattice of $\sl_m$ is denoted $X$ and $\fh := X \otimes_\Z \C \cong \A^{m-1}$. We denote the simple and fundamental roots of $\sl_m$ by $\alpha_i, \Lambda_i \in X$ where $i = 1, \dots, m-1$. 

The data of a geometric categorical $\sl_m$ action consists of:
\begin{enumerate}
\item A collection of smooth complex varieties $Y(\l)$ where $\l \in X$ equipped with $\C^\times$-actions. 
\item Fourier-Mukai kernels
\begin{equation*}
\sE^{(r)}_i(\l) \in D(Y(\l) \times Y(\l + r\alpha_i)) \text{ and } \sF^{(r)}_i(\l) \in D(Y(\l + r\alpha_i) \times Y(\l))
\end{equation*}
(which are $\C^\times$ equivariant). 
\item For each $Y(\l)$ a flat deformation $\tY(\l) \rightarrow \fh$ carrying a $\C^\times$-action compatible with the action $x \mapsto t^2x$ (where $t \in \C^\times$) on the base $\fh$. 
\end{enumerate}
\begin{remark} 
Unfortunately, the indexing of $\sE^{(r)}$ and $\sF^{(r)}$ here is slightly different than the convention used in \cite{CKL1,CKL2,CKL3} when $\g = \sl_2$. In that convention, the above $\sE^{(r)}_i(\l)$ should be denoted $\sE^{(r)}_i(\l+\frac{r}{2}\alpha_i)$. We use the notation here because it is more convenient. This is also the convention adopted in \cite{CK3}, \cite{CKL4} and subsequent papers. Sometimes we just write $\sE_i^{(r)}$ and $\sF_i^{(r)}$ when the weight is obvious or irrelevant. 
\end{remark}

On this data we impose the following conditions. 
\begin{enumerate}
\item The spaces $\{Y(\l+r\alpha_i): r \in \Z\}$ together with deformations $\tY(\l+r\alpha_i)$ restricted to $\spn(\alpha_i) \cong \A^1$ and with kernels $\{\sE_i^{(r_1)}(\l+r_2\alpha_i), \sF_i^{(r_1)}(\l+r_2\alpha_i): r_1, r_2 \in \Z\}$ generate a geometric categorical $\sl_2$ action. 
\item If $|i-j|=1$ then 
$$\sE_i * \sE_j * \sE_i \cong \sE_i^{(2)} * \sE_j \oplus \sE_j * \sE_i^{(2)}$$
while if $|i-j|>1$ then $\sE_i * \sE_j \cong \sE_j * \sE_i$.
\item If $i \ne j$ then $\sF_j * \sE_i \cong \sE_i * \sF_j$.
\item The sheaf $\sE_i$ deforms over $\alpha_i^\perp \subset \fh$ to some
$$\tsE_i \in D(\tY(\l)|_{\alpha_i^\perp} \times_{\alpha_i^\perp} \tY(\l+\alpha_i)|_{\alpha_i^\perp}).$$
\item If $|i-j|=1$ then one can show formally from the relations above that there exists a unique non-zero map $T_{ij}: \sE_i * \sE_j [-1] \rightarrow \sE_j * \sE_i$ whose cone we denote
$$\sE_{ij} := \Cone \left( \sE_i * \sE_j [-1] \xrightarrow{T_{ij}} \sE_j * \sE_i \right) \in D(Y(\l) \times Y(\l+\alpha_i+\alpha_j)).$$
Then $\sE_{ij}$ deforms over $B := (\alpha_i+\alpha_j)^\perp \subset \fh$ to some
$$\tsE_{ij} \in D(\tY(\l)|_B \times_{B} \tY(\l+\alpha_i+\alpha_j)|_B).$$
\end{enumerate}

\subsection{Some remarks}

The first condition above summarizes conditions (i)-(vii) in \cite[Sec. 2.2.2]{CK3} while the last four are conditions (viii)-(xi) in \cite{CK3}.

Conditions (ii) and (iii) are just categorical versions of the standard $U_q(\sl_m)$ relations 
$$E_iE_jE_i = \frac{1}{2}\left( E_i^2E_j+E_jE_i^2 \right) \text{ if } |i-j|=1 \text{ and }
E_iE_j = E_jE_i \text{ if } |i-j|>1$$
and $E_iF_j=F_jE_i$ if $i \ne j$. 

To explain the content of (iv) and (v) recall that the deformation $\tY(\l) \rightarrow \fh$ induces a map 
$$\beta_v: \Delta_* \O_{Y(\l)} \rightarrow \Delta_* \O_{Y(\l)} [2]\{-2\}$$
for any $v \in \fh$ by restricting $\tY(\l)$ to $\spn(v)$ and then applying the same construction used to obtain the map in (\ref{eq:7}). The content of (iv) is that for any $v \in \alpha_i^\perp \subset \fh$ both maps 
\begin{align}
\beta_v I &: \Delta_* \O_{Y(\l)} * \sE_i \rightarrow \Delta_* \O_{Y(\l)} * \sE_i [2]\{-2\} \nonumber \\
I \beta_v &: \sE_i * \Delta_* \O_{Y(\l+\alpha_i)} \rightarrow  \sE_i * \Delta_* \O_{Y(\l+\alpha_i)}[2]\{-2\} \nonumber
\end{align}
are equal to zero. 

To see why this is the case consider a deformation $\tY(\l) \rightarrow \A^1$ and denote by $i: Y(\l) \rightarrow \tY(\l)$ the inclusion of the fibre over $0 \in \A^1$. Then for any $\sA \in D(Y(\l))$ there is a natural distinguished triangle
\begin{equation}\label{eq:8}
\sA [1]\{-2\} \rightarrow i^* i_* \sA \rightarrow \sA 
\end{equation}
which induces a connecting map $\alpha: \sA [-1] \rightarrow \sA [1]\{-2\}$. But if $\sA = i^* \tsA$ for some $\tsA \in D(\tY(\l))$ (i.e $\sA$ deforms) then 
$$i^* i_* \sA \cong i^* i_* i^* \tsA \cong i^* (\tsA \otimes i_* \O_{Y(\l)}) \cong i^* \tsA \otimes i^* i_* \O_{Y(\l)} \cong \sA \oplus \sA[1]\{-2\}.$$
This means that $\alpha=0$. On the other hand, one can check that $\alpha$ is the same as the map $\beta I: \Delta_* \O_{Y(\l)} * \sA \rightarrow \Delta_* \O_{Y(\l)} * \sA [2]\{-2\}$ which means that $\beta I=0$. 

In condition (v) the object $\sE_{ij}$ should be thought of as the $\sE$ for the root $\alpha_i + \alpha_j$. Then the content of (v) is analogous to that of (iv), namely, it states that for any $v \in (\alpha_i+\alpha_j)^\perp \subset \fh$ both maps 
\begin{align}
\beta_v I &: \Delta_* \O_{Y(\l)} * \sE_{ij} \rightarrow \Delta_* \O_{Y(\l)} * \sE_{ij} [2]\{-2\} \nonumber \\
I \beta_v &: \sE_{ij} * \Delta_* \O_{Y(\l+\alpha_i+\alpha_j)} \rightarrow  \sE_{ij} * \Delta_* \O_{Y(\l+\alpha_i+\alpha_j)}[2]\{-2\} \nonumber
\end{align}
are zero. 

Condition (v) was included in \cite{CK3} because in practice $\sE_{ij}$ is a natural kernel supported on the union of $\mbox{supp}(\sE_i * \sE_j)$ and $\mbox{supp}(\sE_j * \sE_i)$ and one can write down the deformation $\tsE_{ij}$ fairly easily and explicitly. Moreover, from a geometric standpoint, it is interesting to see these deformations. However, the r\^ole of (v) in proving the braid relation in \cite{CK3} is quite minimal. Namely, it is used to show in \cite[Lemma 4.9]{CK3} that the map
$$\sE_i * \sE_j * \sE_i [-1]\{1\} \xrightarrow{T_{ij}I} \sE_j * \sE_i * \sE_i$$
induces an isomorphism between the summands $\sE_j * \sE_i^{(2)} [-1] \{1\}$ on either side. However, it turns out one can check this directly without the need of (v) (see \cite{C2}). Thus, condition (v) is essentially redundant. 

\subsection{{Braid group actions}\index{braid groups actions}}

First, recall some basic facts about the weight lattice $X$ of $\sl_m$. The weight lattice $X$ comes equipped with a symmetric bilinear pairing $\la \cdot, \cdot \ra$. Under this pairing we have $\la \alpha_i, \Lambda_j \ra  = \delta_{ij}$ and $\la \alpha_i, \alpha_j \ra$ equals $2,-1$ or $0$ depending on whether $i=j$, $|i-j|=1$ or $|i-j|>1$ respectively.

The Weyl group of $\sl_m$ is the symmetric group $S_m$ generated by $s_1, \dots, s_{m-1}$ with the usual relations $s_i^2=1$, $s_is_j=s_js_i$ if $|i-j|>1$ and $s_is_js_i=s_js_is_j$ if $|i-j|=1$. It acts on the weight lattice $X$ via 
$$s_i(\l) := \l - \la \l, \alpha_i \ra \alpha_i.$$

Recall that our motivation behind studying geometric categorical $\sl_2$ actions is that they induce equivalences (Theorem \ref{thm:main1}). A geometric categorical $\sl_m$ action contains $m-1$ different $\sl_2$ actions, generated by $\sE_i$ and $\sF_i$ for $i=1, \dots, m-1$. Thus, for each $i$ one can write down a complex $\Theta_*$ just like the one in (\ref{eq:9}) but where 
$$\Theta_s := \sF_i^{(\la \l, \alpha_i \ra+s)} * \sE_i^{(s)} [-s]\{s\} \in D(Y(\l) \times Y(s_i \cdot \l)).$$ 
Here we are assuming that $\la \l, \alpha_i \ra \ge 0$ (there is a similar complex if $\la \l, \alpha_i \ra \le 0$). 
These complexes have unique right convolutions, denoted $\sT_i(\l) \in D(Y(\l) \times Y(s_i \cdot \l))$, which induce equivalences. In particular, this means that any two varieties  in the same Weyl orbit are derived equivalent.  

But having a geometric categorical $\sl_m$ action is more than having $m-1$ geometric categorical $\sl_2$ actions. This extra structure leads to the following result:

\begin{theorem}\cite[Thm. 2.10]{CK3}\label{thm:main5}
The kernels $\sT_i$ satisfy the braid group relations. Namely, $\sT_i * \sT_j \cong \sT_j * \sT_i$ if $|i-j|>1$ and $\sT_i*\sT_j*\sT_i \cong \sT_j*\sT_i*\sT_j$ if $|i-j|=1$. This gives an action of the braid group $B_m$ on $D(\sqcup Y(\l))$ compatible with the action of the Weyl group on the weight lattice.
\end{theorem}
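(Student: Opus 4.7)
The plan is to handle the two types of relations separately: the easy commutation when $|i-j|>1$, and the genuine braid relation when $|i-j|=1$.

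For the commutation case $\sT_i * \sT_j \cong \sT_j * \sT_i$, I would exploit conditions (ii) and (iii) of Section \ref{sec:slm}: the isomorphisms $\sE_i * \sE_j \cong \sE_j * \sE_i$ and $\sF_j * \sE_i \cong \sE_i * \sF_j$ extend by iteration (and by extracting direct summands) to all divided powers, so each term $\Theta_s^{(i)} = \sF_i^{(\la \l,\alpha_i \ra+s)} * \sE_i^{(s)}[-s]\{s\}$ of the Rickard complex for $i$ commutes, as a kernel, with every term $\Theta_t^{(j)}$ of the Rickard complex for $j$. Assembling these commutations across the double complex $\Theta^{(i)}_* * \Theta^{(j)}_*$ and checking that the differentials match those of $\Theta^{(j)}_* * \Theta^{(i)}_*$, the uniqueness of the right convolution (Theorem \ref{thm:main1}) then gives $\sT_i * \sT_j \cong \sT_j * \sT_i$.

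The braid case $\sT_i * \sT_j * \sT_i \cong \sT_j * \sT_i * \sT_j$ for $|i-j|=1$ is the substantive content of the theorem. Only the data indexed by $i$, $j$, and the compound root $\alpha_i+\alpha_j$ actually enters, so this reduces to an $\sl_3$-theoretic argument. My approach would be: (a) construct a canonical morphism between the two triple convolutions by iterating the half-braid map $T_{ij}: \sE_i * \sE_j[-1] \to \sE_j * \sE_i$ (which exists formally from condition (ii), as noted in the paper) together with the analogous maps for adjoints; (b) show by induction on the weight $\l$ that this canonical morphism is an isomorphism, using the $\sl_2$-equivalences $\sT_i$ and $\sT_j$ themselves to translate between weights in a manner compatible with both triple compositions; (c) in the base case, expand both triple convolutions into a single complex whose terms are indexed by reduced expressions for the longest element $s_i s_j s_i = s_j s_i s_j$ of the sub-$S_3$, with the differentials controlled by the nilHecke action recalled in Section \ref{sec:equivalences} and by the divided-power categorified Serre relation.

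The main obstacle is step (b), namely promoting the half-braid map from a cohomology-level identification to an isomorphism of honest kernels. This is precisely the difficulty that the deformation data is designed to resolve: as explained in Section \ref{sec:remarks}, a deformation produces a Hochschild class $\beta$ that can upgrade a cohomological isomorphism to a genuine one via a trick going back to Deligne. Here condition (iv) supplies the vanishing $\beta_v I \sE_i = 0$ for $v \in \alpha_i^\perp$, and condition (v) was originally included to ensure the analogous vanishing on $\sE_{ij}$ for $v \in (\alpha_i+\alpha_j)^\perp$. As the paper remarks, condition (v) is essentially redundant, so in the end the argument should rely only on conditions (ii), (iii), (iv) and a direct cohomological computation as in \cite{C2}. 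The hardest bookkeeping lies in tracking the interaction of the shifts $[\cdot]\{\cdot\}$ with the divided-power combinatorics across the roughly cubic number of terms in the triple convolutions, a task made tractable only by the nilHecke reorganisation from the strong categorical $\sl_2$ action.
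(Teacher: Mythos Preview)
Your handling of the commutation case $|i-j|>1$ is fine and matches the standard argument.

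For the braid relation $|i-j|=1$, however, your route diverges from the paper's and the step you identify as ``main obstacle'' is indeed where your plan is missing the key idea. You propose to construct a direct canonical morphism $\sT_i * \sT_j * \sT_i \to \sT_j * \sT_i * \sT_j$ by iterating the half-braid map $T_{ij}$, and then prove it is an isomorphism by induction on $\l$. But each $\sT$ is already a convolution of a Rickard complex, so the triple product is a large iterated cone; producing a single well-defined map between two such objects out of $T_{ij}$'s and adjunctions, and then controlling it through an inductive weight argument, is not something your steps (a)--(c) make precise, and there is no evident mechanism here for the coherence obstructions to cancel.

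The paper avoids this head-on comparison entirely. The object $\sE_{ij} = \Cone(\sE_i * \sE_j[-1] \xrightarrow{T_{ij}} \sE_j * \sE_i)$, which you mention only peripherally in connection with condition (v), is in fact the protagonist. The key step is the intertwining relation
\[
\sE_{ij} * \sT_i \ \cong \ \sT_i * \sE_j,
\]
which says that conjugation by $\sT_i$ sends the $\sl_2$ action generated by $\sE_j,\sF_j$ to the $\sl_2$ action generated by $\sE_{ij},\sF_{ij}$ (the one attached to the root $\alpha_i+\alpha_j$). It follows that $\sT_{ij} * \sT_i \cong \sT_i * \sT_j$, where $\sT_{ij}$ is the Rickard equivalence built from $\sE_{ij},\sF_{ij}$. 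The symmetric argument gives $\sT_j * \sT_{ij} \cong \sT_i * \sT_j$, and combining the two yields
\[
\sT_j^{-1} * \sT_i * \sT_j \ \cong \ \sT_{ij} \ \cong \ \sT_i * \sT_j * \sT_i^{-1}.
\]
So rather than comparing two triple products directly, the paper factors the braid relation through a \emph{third} $\sl_2$ equivalence $\sT_{ij}$. What your proposal lacks is precisely this reduction: proving $\sE_{ij} * \sT_i \cong \sT_i * \sE_j$ is a statement about how a single generator interacts with one Rickard complex, which is tractable via the divided-power and Serre relations; your plan instead confronts two triple Rickard complexes simultaneously.
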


The key step in the proof of Theorem \ref{thm:main5} lies in proving that if $|i-j|=1$ then $\sE_{ij} * \sT_i \cong \sT_i * \sE_j$ \cite[Cor. 5.4]{CK3}. This implies that $\sT_{ij} * \sT_i \cong \sT_i * \sT_j$ where $\sT_{ij}$ is the equivalence build out of $\sE_{ij}$ and $\sF_{ij}$ (in other words, this is the equivalence induced by the $\sl_2$ action corresponding to the root $\alpha_i + \alpha_j$). It then follows by a similar argument that $\sT_j * \sE_{ij} \cong \sT_i * \sT_j$ which means $\sT_j * \sT_{ij} \cong \sT_i * \sT_j$. Thus 
$$\sT_j^{-1} * \sT_i * \sT_j \cong \sT_{ij} \cong \sT_i * \sT_j * \sT_i^{-1}$$
and we are done. 

\subsection{Examples}
We now describe some examples of geometric categorical $\sl_m$ actions. 

\subsubsection{Example: resolution of {Kleinian singularities}\index{Kleinian singularities}}\label{sec:ex1}

Consider the standard diagonal embedding of $G := \Z/m\Z$ inside $SL_2(\C)$ and let $\pi : Y \rightarrow \C^2/G$ be the minimal resolution. Recall that the exceptional fibre $\pi^{-1}(0)$ contains a chain of $m-1$ $\P^1$s which we label $E_1, \dots, E_{m-1}$. 

We can construct a geometric categorical $\sl_m$ action as follows. We let $Y(0) := Y$, $Y(\l) := \pt$ for $\l$ a root of $\sl_m$ and $Y(\l) := \emptyset$ for all other $\l \in X$. The action of $ \C^\times $ on $Y$ comes from the scaling action on $\C^2$. We define $\sE_i(0): D(Y) \rightarrow D(\pt)$ using the kernel $\O_{E_i}(-1) \in D(Y \times \pt)$ and similarly with $\sE_i(-\alpha_i)$, $\sF_i(0)$ and $\sF_i(-\alpha_i)$ (all other $ \sE_i, \sF_i $ we need to define are functors $D(\pt) \rightarrow D(\pt)$ which we take to be the identity). The deformation $\tilde{Y}$ of $Y$ is the standard deformation which may be constructed by thinking of $\C^2/G $ as a Slodowy slice or by deforming the polynomial defining the singularity $\C^2/G$.

{\bf Fact.} This defines a geometric categorical $\sl_m$ action. 

Let us check condition (ii) for having a geometric categorical $ \sl_m $ action (all other conditions are immediate or follow along the same lines). Suppose $|i-j|=1$ so that $E_i$ and $E_j$ intersect in a point. Then condition (ii) states that
$$ \sE_i(\alpha_{j}) * \sE_{j}(0) * \sE_i (-\alpha_i) \cong \sE_{j}(\alpha_i) * \sE^{(2)}_i(-\alpha_i) \oplus \sE_i^{(2)}(-\alpha_i + \alpha_{j}) * \sE_{j}(-\alpha_i).$$
Now $ Y(-\alpha_i + \alpha_{j}) = \emptyset$ while $\sE^{(2)}_i(-\alpha_i) = \O_{\Delta_{\pt}} = \sE_i(\alpha_{j})$. So we see that this is equivalent to the fact that the composition
$$D(\pt) \xrightarrow{\sE_i(-\alpha_i)} D(Y) \xrightarrow{\sE_{j}(0)} D(\pt)$$
is the identity. Since the first functor is given by tensoring with $\O_{E_i}(-1)$ and the second functor is $\Ext^*(\O_{E_j}(-1)[-1], \cdot)$ this condition corresponds to the fact that $\Ext^\ell(\O_{E_j}(-1), \O_{E_i}(-1))$ is zero unless $\ell=1$ in which case it is one-dimensional.

\subsubsection{Example: {flag varieties}\index{flag varieties}}\label{sec:ex2}

The following generalizes the geometric categorical $\sl_2$ action on cotangent bundles to Grassmannians from section \ref{sec:action}. 

Fix $m \le N$ and consider the variety $Fl_m(\C^N)$ of $m$-step flags in $\C^N$. This variety has many connected components, which are indexed by the possible dimensions of the spaces in the flags. In particular, let
\begin{equation*}
C(m, N) := \{ \l = (\l_1, \dots, \l_m) \in \mathbb{N}^N: \l_1 + \dots + \l_m = N \}.
\end{equation*}
For $ \l \in C(m,N)$, we can consider the variety of $m$-steps flags where the jumps are given by $ \l$:
\begin{equation*}
Fl_\l(\C^N) := \{ 0= V_0 \subset V_1 \subset \cdots \subset V_m = \C^N : \dim V_i/V_{i-1} = \l_i \}.
\end{equation*}

Let $ Y(\l) := T^\star Fl_\l(\C^N)$ (if $\l \not\in C(m,N)$ we take $Y(\l) = \emptyset$).  These will be our varieties for the geometric categorical $\sl_m $ action.  We regard each $ \l $ as a weight for $ \sl_m $ via the identification of the weight lattice of $ \sl_m $ with the quotient $ \Z^m / (1, \cdots, 1)$.  By convention the simple root $\alpha_i$ equals $(0, \dots, 0,-1,1,0, \dots, 0)$ where the $-1$ is in position $i$.

We will use the following description of the cotangent bundle to the partial flag varieties (this generalizes the description for Grassmannians in (\ref{eq:Grassdef})):
\begin{equation*}
Y(\l) := \{(X, V): X \in \End(\C^N), V \in Fl_\l(\C^N), X V_i \subset V_{i-1} \}
\end{equation*}
This description suggests the following deformation $\tilde{Y}(\l) \rightarrow \C^m$ of $Y(\l)$:
\begin{equation*}
\{(X, V, x) : X \in \End(\C^N), V \in Fl_\l(\C^N), x \in \C^m, X V_i \subset V_i, X |_{V_i/V_{i-1}} = x_i \cdot \id \}.
\end{equation*}

We will restrict our deformation over the locus $ \{(x_1, \dots, x_n) \in \C^m : x_m = 0 \} $ which we identify with $ \fh $, the Cartan for $ \sl_m $.

We define an action of $ \C^\times $ on $ \tY(\l) $ by $ t \cdot (X, V, x) = (t^2 X, V, t^2 x) $.  Restricting to $Y(\l) = T^\star Fl_\l(\C^N)$ this corresponds to a trivial action on the base and a scaling of the fibres.

To construct the kernels $\sE^{(r)}_i$ we consider correspondences $W_i^{r}(\l)$ analogous to $W^r(\l)$ defined in section \ref{sec:action} .  More specifically, let $ \l, i, r $ be such that $ \l \in C(m,N) $ and $ \l + r \alpha_i \in C(m,N) $ (i.e. $ \l_i \ge r $).  Then we define 
\begin{align*}
W_i^{r}(\l) := \{ (X, V, V') : & (X, V) \in Y(\l), (X,V') \in Y(\l + r\alpha_i), \\ 
& V_j = V'_j \text{ for } j \ne i, \text{ and } V'_i \subset V_i \}.
\end{align*}
From this correspondence we define 
\begin{equation*}
\sE^{(r)}_i(\l) := \O_{W_i^{r}(\l)} \otimes \det(V_{i+1}/V_i)^{-r} \otimes \det(V'_i/V_{i-1})^r \{r(\l_i-r)\} 
 \end{equation*}
where, abusing notation, $V_i$ denotes the vector bundle on $Y(\l)$ whose fibre over $(X,V) \in Y(\l)$ is $V_i$. This sheaf belongs to $D(Y(\l) \times Y(\l+r\alpha_i))$. Similarly, we define 
\begin{equation*}
\sF^{(r)}_i(\l) := \O_{W_i^{r}(\l)} \otimes \det(V'_i/V_i)^{\l_{i+1} - \l_i + r}\{ r \l_{i+1} \}  \in D(Y(\l+r \alpha_i) \times Y(\l)).
 \end{equation*}
Note that now we regard $W_i^{r}(\l)$ as a subvariety of $ Y(\l+r \alpha_i) \times Y(\l)$ which means that $V_i \subset V_i'$ (since, by convention, the prime indicates pullback from the second factor).

\begin{theorem}\cite[Thm. 3.1]{CK3}\label{thm:main6}
The datum above defines a geometric categorical $\sl_m$ action on $D(T^\star Fl_m(\C^N))$.
\end{theorem}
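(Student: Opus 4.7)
The strategy is to check, in turn, each of the five conditions in the definition of a geometric categorical $\sl_m$ action given above.

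For condition (i), fix a simple root $\alpha_i$ and consider the morphism that forgets $V_i$ from the flag. Over a point $(X, V_0, \dots, \widehat{V_i}, \dots, V_m)$ of the base, the requirements $XV_i \subset V_{i-1}$, $XV_{i+1} \subset V_i$ together with $XV_j \subset V_{j-1}$ for $j \ne i, i+1$ force $X$ to induce a square-zero endomorphism of $V_{i+1}/V_{i-1}$ whose image lies inside $V_i/V_{i-1} \subset \ker X$. Thus the fibre is a copy of $T^\star \bG(\l_i, \l_i + \l_{i+1})$, and the family $\{Y(\l+r\alpha_i)\}_r$ together with the restricted deformation $\tY(\l+r\alpha_i)|_{\spn(\alpha_i)}$ is pulled back in a family from the Grassmannian picture of section \ref{sec:flops1} and section \ref{sec:flops2}. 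Up to twisting by line bundles pulled back from the base, the kernels $\sE_i^{(r)}, \sF_i^{(r)}$ in (\ref{eq:2}), (\ref{eq:3}) restrict to the Grassmannian kernels of section \ref{sec:action}. Since the axioms of a geometric categorical $\sl_2$ action are stable under relative base change and twisting by pulled back line bundles, condition (i) follows from Theorem \ref{thm:main2}.

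For conditions (ii) and (iii), the case $|i-j|>1$ is immediate: the correspondences $W_i^r$ and $W_j^s$ modify disjoint slots of the flag, so both $\sE_i * \sE_j$ and $\sE_j * \sE_i$ (respectively $\sE_i * \sF_j$ and $\sF_j * \sE_i$) are pushforwards of the same twisted structure sheaf from a transverse fibre product. The case $i \ne j$ of condition (iii) with $|i-j|=1$ is similar: the two sides are supported on a common fibre product parametrising independent modifications in slots $i$ and $j$, and the twists match. The remaining substantive point is condition (ii) with $|i-j|=1$: here $\sE_i * \sE_j * \sE_i$ is supported on a tower of $\P^1$-bundles parametrising nested subspaces $V_i'' \subset V_i' \subset V_{i+1}$ (with everything else fixed), and a Koszul-type calculation on this tower, tracking line bundle twists against the Euler sequences, exhibits the splitting into $\sE_i^{(2)} * \sE_j \oplus \sE_j * \sE_i^{(2)}$.

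Conditions (iv) and (v) are handled by writing down explicit deformations. The correspondence $W_i^r(\l)$ has an obvious family version $\tilde{W}_i^r(\l) \subset \tY(\l) \times_{\A^m} \tY(\l+r\alpha_i)$ obtained by replacing $X V_k \subset V_{k-1}$ with $X V_k \subset V_k$ and $X|_{V_k/V_{k-1}} = x_k \cdot \id$, together with the analogous condition on the second flag. Restricted to $\alpha_i^\perp \subset \fh$, i.e.\ the locus where $x_i = x_{i+1}$, the compatibility constraints coming from $V_i \subset V_{i+1}$ and $V_i' \subset V_{i+1}$ are mutually consistent, so this gives $\tsE_i^{(r)}$ after imposing the line bundle twist from (\ref{eq:2}). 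For (v), the object $\sE_{ij}$ is supported on the union of the images of the nested correspondences $W_i \ast W_j$ and $W_j \ast W_i$, and the same prescription produces a deformation over $(\alpha_i + \alpha_j)^\perp$. The main technical obstacle is the sheaf-level identification in condition (ii) with $|i-j|=1$: while the underlying K-theoretic identity is just the $U_q(\sl_m)$ Serre relation, lifting it to an isomorphism of coherent sheaves requires the projective-bundle-tower Koszul computation mentioned above, and is where the bulk of the work in \cite[Sec. 3--4]{CK3} is spent.
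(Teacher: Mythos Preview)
The present paper is a survey and does not prove this theorem; it is simply cited from \cite[Thm.~3.1]{CK3}. Your outline is broadly in line with the approach taken there: condition (i) is reduced to the Grassmannian case (Theorem~\ref{thm:main2}), conditions (ii)--(iii) are checked by direct convolution computations on the correspondences $W_i^r(\l)$ with the Serre relation $\sE_i * \sE_j * \sE_i \cong \sE_i^{(2)} * \sE_j \oplus \sE_j * \sE_i^{(2)}$ for $|i-j|=1$ being the main work, and (iv)--(v) are handled by writing down explicit deformed correspondences over the relevant hyperplanes in $\fh$.

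One imprecision worth flagging: your fibration claim in (i) is not literally correct. Forgetting $V_i$ from $Y(\l)$ does not give fibres isomorphic to $T^\star\bG(\l_i,\l_i+\l_{i+1})$, because $X$ is an endomorphism of all of $\C^N$ and carries information beyond its induced action on $V_{i+1}/V_{i-1}$; the fibre over a point with $X$ fixed is only a single fibre of the affinization map $T^\star\bG \to \overline{B}$, not the whole cotangent bundle. What is actually true, and what suffices, is that the convolutions of the $\sE_i^{(r)}$'s and $\sF_i^{(r)}$'s along the $\alpha_i$-string are computed by the same local formulas as in the Grassmannian case, with the rest of the flag and the remaining part of $X$ carried along as passive parameters. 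This relative statement about kernels, rather than a fibration statement about the spaces, is how the reduction to Theorem~\ref{thm:main2} is made in \cite{CK3}.
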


As a corollary of Theorems \ref{thm:main5} and \ref{thm:main6} we obtain:

\begin{corollary}\cite[Thm. 3.2]{CK3}\label{cor:main2}
There is an action of the braid group $B_m$ on the derived category of coherent sheaves on $T^\star Fl_m(\C^N)$ which is compatible with the action of $S_m$ on the set of connected components $C(m,N)$.
\end{corollary}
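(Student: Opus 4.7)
The plan is to deduce Corollary \ref{cor:main2} essentially as a direct combination of Theorems \ref{thm:main5} and \ref{thm:main6}, with a small verification that the $S_m$-action on weights restricts to the described action on the component set $C(m,N)$.

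First I would apply Theorem \ref{thm:main6} to obtain a geometric categorical $\sl_m$ action on the collection $\{Y(\l) = T^\star Fl_\l(\C^N) : \l \in C(m,N)\}$, extended by $Y(\l) = \emptyset$ for $\l \notin C(m,N)$. Since $T^\star Fl_m(\C^N) = \bigsqcup_{\l \in C(m,N)} T^\star Fl_\l(\C^N)$, we have $D(T^\star Fl_m(\C^N)) = \bigoplus_{\l \in C(m,N)} D(Y(\l))$, and the kernels $\sE_i^{(r)}, \sF_i^{(r)}$ together with the deformations $\tY(\l) \to \fh$ satisfy all the axioms of a geometric categorical $\sl_m$ action.

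Next I would invoke Theorem \ref{thm:main5} directly: for each simple reflection $s_i$ we get a Rickard complex whose unique right convolution $\sT_i(\l) \in D(Y(\l) \times Y(s_i \cdot \l))$ induces an equivalence $\Phi_{\sT_i(\l)} : D(Y(\l)) \xrightarrow{\sim} D(Y(s_i \cdot \l))$. Summing over $\l \in C(m,N)$ yields endofunctors $\sT_i$ of $D(T^\star Fl_m(\C^N))$, which by Theorem \ref{thm:main5} satisfy $\sT_i * \sT_j \cong \sT_j * \sT_i$ if $|i-j|>1$ and $\sT_i * \sT_j * \sT_i \cong \sT_j * \sT_i * \sT_j$ if $|i-j|=1$. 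This is precisely an action of $B_m$.

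Finally I would check compatibility with the $S_m$-action on $C(m,N)$. Using the identification of the weight lattice with $\Z^m/(1,\dots,1)$ and $\alpha_i = (0,\dots,0,-1,1,0,\dots,0)$ with the $-1$ in position $i$, a direct computation gives $\langle \l, \alpha_i \rangle = \l_{i+1} - \l_i$, so
\begin{equation*}
s_i(\l) = \l - \la \l, \alpha_i \ra \alpha_i = (\l_1, \dots, \l_{i-1}, \l_{i+1}, \l_i, \l_{i+2}, \dots, \l_m),
\end{equation*}
i.e. $s_i$ swaps the $i$-th and $(i+1)$-st entries of $\l$. This coincides with the tautological action of $S_m$ on $C(m,N)$ by permutation of coordinates. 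Since $\Phi_{\sT_i(\l)}$ sends $D(Y(\l))$ to $D(Y(s_i \cdot \l))$, the braid group action permutes the summands indexed by $C(m,N)$ in a way that factors through the symmetric group, establishing the claimed compatibility. No step of this assembly is hard; all of the real work has been absorbed into Theorems \ref{thm:main5} and \ref{thm:main6}.
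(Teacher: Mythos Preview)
Your proposal is correct and is exactly the argument the paper has in mind: the corollary is stated immediately after the sentence ``As a corollary of Theorems \ref{thm:main5} and \ref{thm:main6} we obtain,'' and you have simply spelled out this deduction together with the routine check that $s_i$ acts on $C(m,N)$ by swapping the $i$th and $(i{+}1)$st entries.
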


Although the construction of each kernel
$$\sT_i \in D(T^\star Fl_\l(\C^N) \times T^\star Fl_{s_i \cdot \l}(\C^N))$$
via a categorical $\sl_m$ action is elaborate, $\sT_i$ has a concrete description like the one in Theorem \ref{thm:main3} (which is just the special case $m=2$). In particular, $\sT_i$ is a Cohen-Macaulay sheaf supported on the variety
\begin{align}
Z_i(\l) := \{(X,V,V')&: X \in \End(\C^N), V \in Fl_\l(\C^N), V' \in Fl_{s_i \cdot \l}(\C^N) \nonumber \\
& XV_j \subset V_{j-1}, XV'_j \subset V'_{j-1} \text{ and } V_j=V'_j \text{ if } j \ne i \}. \nonumber
\end{align}

\subsubsection{Special cases of interest.}
If $N = dm$ for some integer $d$ and we choose $\l = (d, \dots, d)$ then $s_i \cdot \l = \l$ for all $i$. Thus we obtain an action of the braid group on $D(T^\star Fl_\l(\C^N))$ which is a connected variety.

Furthermore, if $d=1$ (i.e. $N=m$) then $T^\star Fl_\l(\C^N)$ is the cotangent bundle of the full flag variety of $\C^N$. An action of the braid group on the full flag variety was also constructed by Khovanov-Thomas \cite{KT} and by Riche \cite{Ric}, Bezrukavnikov-Riche \cite{BR}. In this case, the generators of the braid group act by Seidel-Thomas (a.k.a. spherical) twists (see section \ref{sec:spherical} below). Their work served as motivation for constructing the braid group actions above between more general partial flag varieties.  

\subsubsection{Affine braid groups.}
The (extended) {\em affine braid group}\index{braid groups} of $\sl_m$ has generators $T_i$ and $\theta_i$ for $i=1, \dots, m-1$ and relations:
\begin{itemize}
\item $T_iT_j = T_jT_i$ if $|i-j|>1$ and $T_iT_jT_i=T_jT_iT_j$ if $|i-j|=1$
\item $T_i \theta_j = \theta_j T_i$ if $i \ne j$ 
\item $T_i = \theta_{i-1}^{-1} \theta_{i+1}^{-1} \theta_i T_i^{-1} \theta_i$ for all $i$
\item $\theta_i \theta_j = \theta_j \theta_i$ for all $i,j$.
\end{itemize}

In \cite{CKL4} we show that the braid group action on $D(T^\star Fl_m(\C^N))$ extendes to an affine braid group action as follows:

\begin{corollary}\cite[Thm. 7.2]{CKL4}\label{cor:main3}
The kernels $\sT_i$ together with $\theta_i := \Delta_* \det(V_i)$ induce an action of the (extended) affine braid group of $\sl_m$ on the {\emph{non-equivariant}} derived category of coherent sheaves on $T^\star Fl_m(\C^N)$. 
\end{corollary}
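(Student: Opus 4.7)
The extended affine braid group has four defining families of relations, and the plan is to verify each.

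The commutativity $\theta_i*\theta_j \cong \theta_j*\theta_i$ is immediate, since both sides equal $\Delta_*(\det V_i\otimes \det V_j)$. The finite braid relations for the $\sT_i$ are exactly Corollary \ref{cor:main2}, a consequence of Theorem \ref{thm:main5} applied to the categorical $\sl_m$ action of Theorem \ref{thm:main6}; passing from the equivariant to the non-equivariant category is harmless for these relations. For the mixed commutation $\sT_i*\theta_j \cong \theta_j*\sT_i$ with $i\ne j$: the kernel $\sT_i(\l)$ is supported on $Z_i(\l)$, and by construction $V_j = V'_j$ along $Z_i(\l)$ whenever $j\ne i$; hence $\pi_1^*\det V_j$ and $\pi_2^*\det V_j$ coincide on the support of $\sT_i(\l)$, giving the desired kernel identity.

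The remaining and interesting relation is $T_i = \theta_{i-1}^{-1}\theta_{i+1}^{-1}\theta_i T_i^{-1}\theta_i$ (with convention $\theta_0 = \theta_m = 1$), the Bernstein-type relation. Translating into a kernel identity in $D(Y(\l)\times Y(s_i\cdot\l))$ and using the previous steps to move all the $\theta$'s onto one side, it unwinds to the explicit isomorphism
\[
(\sT_i(\l))^{-1} \;\cong\; \sT_i(s_i\cdot\l)\,\otimes\, \det V_i^{-1}\otimes \det {V'_i}^{-1}\otimes \det V_{i-1}\otimes \det V_{i+1},
\]
where $V_j$, $V'_j$ are the tautological bundles pulled back from the first and second factors (we have used $V_{i\pm 1}=V'_{i\pm 1}$ on the support). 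When $m=2$ this specializes exactly to Proposition \ref{prop:main2} except for the internal grading shift $\{N-2k\}$; the disappearance of this shift is precisely why Corollary \ref{cor:main3} is stated only in the non-equivariant category.

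To establish the inverse formula in the partial flag setting, the plan is to generalize the Grassmannian argument of \cite{CKL4}: first, identify $(\sT_i(\l))^{-1}$ with the right adjoint of $\sT_i(\l)$, which amounts to tensoring $\sT_i(\l)^\vee$ with the relative dualizing sheaf and shifting by the dimension; next, use the concrete description of $\sT_i(\l)$ (alluded to after Corollary \ref{cor:main2}) as the pushforward of an explicit line bundle from an open subscheme of $Z_i(\l)$ whose complement has codimension at least two; finally, compute the dualizing sheaf on this open locus, where irreducible components intersect transversally in Cartier divisors and the calculation is tractable. The main obstacle is this last step: careful bookkeeping of the dualizing sheaf together with the divisor contributions must leave exactly the four factors $\det V_i,\det V'_i,\det V_{i-1},\det V_{i+1}$ and nothing else. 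In the equivariant setting a non-trivial $\{\cdot\}$-shift survives; non-equivariantly it disappears, and the relation holds on the nose.
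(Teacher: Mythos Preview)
Your proposal is correct and aligns with the paper's own treatment. The paper does not give an independent proof here: it cites \cite[Thm.~7.2]{CKL4} and remarks that the substantive content lies in the Bernstein-type relation $T_i = \theta_{i-1}^{-1}\theta_{i+1}^{-1}\theta_i T_i^{-1}\theta_i$, which it identifies as the generalization of Proposition~\ref{prop:main2}. Your decomposition into the four families of relations, your dispatch of the three easy ones (commuting $\theta$'s, finite braid relations via Corollary~\ref{cor:main2}, and $\sT_i*\theta_j\cong\theta_j*\sT_i$ for $i\ne j$ via the support description of $Z_i(\l)$), and your reformulation of the fourth as an explicit kernel identity extending Proposition~\ref{prop:main2} all match this exactly --- including the observation that the surviving $\{\cdot\}$-shift is precisely what forces the statement into the non-equivariant category.

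Your outline for establishing the remaining relation (pass to the adjoint, use the description of $\sT_i(\l)$ as pushforward of a line bundle from a codimension-two open in $Z_i(\l)$, and compute the dualizing contributions there) is a reasonable strategy and parallels the method behind Section~\ref{sec:inverse} and \cite{C1}. You should be aware, though, that you have not actually carried out this computation, and you yourself flag the divisor bookkeeping as the main obstacle; in that sense what you have written is a plan rather than a proof. Since the paper defers to \cite{CKL4} at exactly the same point, your level of detail is consistent with its survey treatment.
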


In the above Corollary $\theta_i$ is the kernel inducing the functor which is tensoring with the line bundle $\det(V_i)$. Note that this extended action is only possible on the non-equivariant categories as, perhaps a bit surprisingly, the equivariant shifts $\{\cdot\}$ do not work out. Note that the majority of the content in Corollary \ref{cor:main3} is captured in the relation $T_i = \theta_{i-1}^{-1} \theta_{i+1}^{-1} \theta_i T_i^{-1} \theta_i$ which generalizes the result in Proposition \ref{prop:main2}. 

Again, when $m=N$, this affine braid group action on the full flag variety was constructed first in \cite{KT,Ric,BR}.  

\subsubsection{Example: quiver varieties}\label{sec:ex3}

The varieties in the two examples above (sections \ref{sec:ex1} and \ref{sec:ex2}) are special cases of {\em Nakajima quiver varieties}\index{quiver varieties} of type $A_{m-1}$ \cite{Nak1, Nak2}. In \cite[Thm. 3.2]{CKL4} we construct a geometric categorical $\sl_m$ action on derived categories of coherent sheaves on arbitrary Nakajima quiver varieties of type $A_{m-1}$ (in fact we do this for any simply laced Kac-Moody Lie algebra $\g$). This action recovers the two examples above as special cases.  

Note that the geometry involved in constructing the geometric categorical actions on arbitrary Nakajima quiver varieties is precisely the geometry of cotangent bundles on Grassmannians from section \ref{sec:flops1}. In particular, the generators of the braid group actions induce derived equivalences between varieties, such as $T^\star Fl_\l(\C^N)$ and $T^\star Fl_{s_i \cdot \l}(\C^N)$, which are related by stratified Mukai flops. In fact, inspired by work of Nakajima, many of the proofs in \cite{CKL4} reduce the problem to the case of cotangent bundles to Grassmannians. 

These quiver varieties are also equipped with natural deformations. These deformations are related to each other via the geometry of stratified Atiyah flops from section \ref{sec:flops2}.

\section{Twists}

One of the first techniques for constructing derived autoequivalences was that of spherical twists as defined by Seidel and Thomas in \cite{ST}. This notion was generalized by various authors (Horja \cite{Ho}, Anno \cite{An}, and Rouquier \cite{Rold}) to twists in spherical functors (a relative version). Spherical objects were also generalized to $\P$-objects by Huybrechts and Thomas in \cite{HT}. We briefly discuss their work here. 

\subsection{Seidel-Thomas (spherical) twists}\label{sec:spherical}

First recall the definition of a {\em spherical functor}\index{spherical twist}. Let $X, Y$ be varieties (for convenience, we ignore the $\C^\times$-action in this section). Then a FM kernel $\sP \in D(X \times Y)$ is spherical if:
\begin{itemize}
\item $\sP_R \cong \sP_L [k]$ for some $k$.
\item $\Delta_* \O_X \rightarrow \sP_R * \sP \cong \sP_L * \sP [k] \rightarrow \Delta_* \O_X [k]$ is a distinguished triangle in $D(X \times X)$. Both maps here are the adjunction maps. 
\end{itemize}
The induced map $\Phi_{\sP}: D(X) \rightarrow D(Y)$ is called a spherical functor. Define 
$$\sT_{\sP} := \Cone( \sP * \sP_R \xrightarrow{adj} \Delta_* \O_Y) \in D(Y \times Y)$$
where $adj$ is the natural adjunction map. The induced functor $\Phi_{\sT_\sP}: D(Y) \rightarrow D(Y)$ is called a spherical twist. 

\begin{remark}
The second condition above is sometimes replaced by $\sP_R * \sP \cong \Delta_* \O_X \oplus \Delta_* \O_X [k]$ (which is {\it a priori} stronger). The right hand side then resembles the cohomology of a sphere. There is also a mirror side to this story where the twist $\sT_{\sP}$ is often induced by monodromy around a singularity whose vanishing cycle is a sphere. This explains the terminology ``spherical functor''.
\end{remark}

\begin{theorem}\cite{ST,Ho,An,Rold}\label{thm:twist} If $\sP \in D(X \times Y)$ is a spherical kernel then $\Phi_{\sT_\sP}: D(Y) \rightarrow D(Y)$ is a derived autoequivalence. 
\end{theorem}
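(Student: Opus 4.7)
The plan is to construct a two-sided inverse to $\sT_\sP$ explicitly. Set
$$\sT_\sP^\vee := \Cone\bigl(\Delta_*\O_Y \xrightarrow{\eta_L} \sP * \sP_L\bigr)[-1] \in D(Y \times Y),$$
where $\eta_L$ is the unit of the adjunction $\Phi_{\sP_L} \dashv \Phi_\sP$. I will show $\sT_\sP * \sT_\sP^\vee \cong \Delta_*\O_Y$; the opposite composition is handled by a parallel argument, with the roles of left and right adjoints exchanged using the spherical identity $\sP_R \cong \sP_L[k]$.

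The core calculation is the identity $\sT_\sP * \sP \cong \sP[k+1]$. Convolving the spherical distinguished triangle $\Delta_*\O_X \xrightarrow{\eta_R} \sP_R * \sP \to \Delta_*\O_X[k]$ on the left by $\sP$ gives the triangle
$$\sP \xrightarrow{\sP * \eta_R} \sP * \sP_R * \sP \to \sP[k].$$
The adjunction triangle identity $(\epsilon_R * \sP)(\sP * \eta_R) = \id_\sP$ shows that this triangle splits, so $\sP * \sP_R * \sP \cong \sP \oplus \sP[k]$ with the counit $\epsilon_R * \sP$ as projection onto the first factor. Since $\sT_\sP$ is by definition the cone of $\epsilon_R$, convolving the defining triangle $\sP * \sP_R \xrightarrow{\epsilon_R} \Delta_*\O_Y \to \sT_\sP$ on the right by $\sP$ realizes $\sT_\sP * \sP$ as the cone of the projection $\sP \oplus \sP[k] \to \sP$, namely $\sP[k+1]$.

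Next, convolving the defining triangle $\sT_\sP^\vee \to \Delta_*\O_Y \xrightarrow{\eta_L} \sP * \sP_L$ on the left by $\sT_\sP$ gives
$$\sT_\sP * \sT_\sP^\vee \to \sT_\sP \xrightarrow{g} \sT_\sP * \sP * \sP_L \cong \sP * \sP_L[k+1] \cong \sP * \sP_R[1],$$
where the first isomorphism uses the core calculation and the second uses $\sP_L \cong \sP_R[-k]$. A naturality diagram chase shows that $g$ is precisely the connecting morphism in the rotated defining triangle $\Delta_*\O_Y \to \sT_\sP \to \sP * \sP_R[1]$, whose fibre is $\Delta_*\O_Y$. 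Hence $\sT_\sP * \sT_\sP^\vee \cong \Delta_*\O_Y$, as required.

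The main obstacle is the last step — identifying $g$ with the expected connecting morphism. This is the place where the distinguished-triangle formulation of the spherical condition matters: knowing only that $\sP_R * \sP \cong \Delta_*\O_X \oplus \Delta_*\O_X[k]$ abstractly would not suffice, since the identification requires tracking adjunction units and counits through the splitting and convolution, and these are pinned down only by the honest distinguished triangle together with the triangle identities.
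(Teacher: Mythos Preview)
The paper does not give its own proof of this theorem; it is stated with citations to \cite{ST,Ho,An,Rold} and left there. What the paper does instead, in the paragraphs following the statement, is recast the result (for $k=2$) as a special case of its main machinery: one sets $Y(-2)=Y(2)=X$, $Y(0)=Y$, takes the $\sE$'s and $\sF$'s to be $\sP$ and $\sP_R[-1]$ in the appropriate slots, checks that the spherical conditions become the geometric categorical $\sl_2$ axioms, observes that Rickard's complex $\Theta_*$ reduces to the two-term complex $\sP * \sP_R \rightarrow \Delta_*\O_Y$, and then invokes Theorem~\ref{thm:main1}. For general $k$ the paper remarks that the same idea works with the generalized Kac--Moody algebra whose Cartan matrix is $(k)$.

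Your approach is the classical direct one, essentially the argument in the cited references: build an explicit candidate inverse $\sT_\sP^\vee$ and verify $\sT_\sP * \sT_\sP^\vee \cong \Delta_*\O_Y$ by first establishing $\sT_\sP * \sP \cong \sP[k+1]$ via the triangle identities. This is correct in outline, and you rightly flag the delicate point: identifying the map $g$ with the connecting morphism of the defining triangle requires tracking adjunction units and counits through a cone, which is not automatic in a bare triangulated category (cones are not functorial). The references you are reproducing handle this either through careful use of the octahedral axiom or by passing to a dg-enhancement where cones are functorial; your phrase ``a naturality diagram chase'' hides exactly this work. As a sketch it is fine, but that step would need to be made precise to count as a proof. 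The paper's $\sl_2$ route trades this local difficulty for the heavier global machinery of Theorem~\ref{thm:main1}, whose proof absorbs the analogous coherence issues once and for all.
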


If $X$ is just a point then $\sP \in D(Y)$ is refered to as a spherical object. In this case the setup above recovers the construction from \cite{ST}. 

On the other hand if $k=2$ then a spherical functor is just a special case of a geometric categorical $\sl_2$ action. To see this we take $Y(\l) = \emptyset$ if $\l \not\in \{-2,0,2\}$ while 
$$Y(-2) := X, Y(0) := Y \text{ and }  Y(2) := X.$$ 
We then define 
\begin{align}
\sE(-1) := \sP \in D(X \times Y) \hspace{1cm} & \sE(1) := \sP_R [-1] \in D(Y \times X) \nonumber \\
\sF(1) := \sP \in D(X \times Y) \hspace{1cm}  & \sF(-1) := \sP_R [-1] \in D(Y \times X). \nonumber
\end{align}
It turns out this case is simple enough that we do not need the deformations $\tY(\l)$. Now one can easily check that the geometric categorical relations on the $\sE$s and $\sF$s defined above are equivalent to the fact that $\sP$ is a spherical functor. Furthermore, the complex $\Theta_*$ from section \ref{sec:equivalences} becomes 
$$\left[ \sF * \sE [-1] \rightarrow \Delta_* \O_Y \right] \cong \left[ \sP * \sP_R \rightarrow \Delta_* \O_Y \right]$$
which means that $\sT \cong \sT_{\sP}$ and $\Phi_{\sT_\sP}$ is an autoequivalence by Theorem \ref{thm:main1}. 

This explains how (geometric) categorical $\sl_2$ action induce spherical twists when $k=2$. Now recall that $\sl_2$ is the Lie algebra defined by the Cartan datum consisting of the $1 \times 1$ matrix $(2)$. The matrix $(k)$ for $k > 2$ also defines a (generalized) Lie algebra. Geometric categorical actions of this Lie algebra explain the existence of spherical twists for any $k \ge 2$. In practice these algebras are similar to $\sl_2$ and the equivalences they induce are also very similar (just like spherical twists for $k=2$ and for $k > 2$ are defined by the same basic construction).

\subsection{$\P^n$-twists}

In \cite{HT} Huybrechts and Thomas define $\sP \in D(Y)$ to be a $\P^n$-object if $\sP \cong \sP \otimes \omega_Y$ and $\Ext^*(\sP,\sP) \cong H^*(\P^n,\C)$ as a graded ring. They then prove:

\begin{proposition}\cite[Lemma 2.1, Prop. 2.6]{HT}\label{prop:HT} Let $\sP \in D(Y)$ be a $\P^n$ object and $h \in \Ext^2(\sP,\sP)$ a generator. Then inside $D(Y \times Y)$ the complex 
$$(\sP^\vee \boxtimes \sP) [-2] \xrightarrow{h^\vee \boxtimes \id - \id \boxtimes h} \sP^\vee \boxtimes \sP \xrightarrow{{\rm tr}} \Delta_* \O_Y$$
has a unique right convolution $\sT_{\sP} \in D(Y \times Y)$ which induces an autoequivalence $\Phi_{\sT_{\sP}}: D(Y) \rightarrow D(Y)$ called a $\P^n$-twist.
\end{proposition}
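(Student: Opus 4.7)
The plan is to establish the assertions in three stages: that the given sequence is a complex, that the right convolution exists and is unique, and finally that the resulting kernel is invertible.

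For the first two steps, the basic tool is the $\Delta_*$-adjunction together with the identification $\Delta^*(\sP^\vee \boxtimes \sP) \cong R\mathcal{H}om(\sP,\sP)$, which yields
$$\Hom_{D(Y \times Y)}\bigl((\sP^\vee \boxtimes \sP)[j],\, \Delta_* \O_Y\bigr) \cong \Ext^{-j}(\sP,\sP)$$
for every $j$. The vanishing of the composition $\tr \circ (h^\vee \boxtimes \id - \id \boxtimes h)$ then reduces to the identity $h - h = 0 \in \Ext^2(\sP,\sP)$. For uniqueness of the right convolution I invoke the cohomological criterion recalled in Section 3.1.3: because the complex has only three terms the sole nontrivial check is $\Hom(\sP_2[1], \sP_0) \cong \Ext^1(\sP,\sP) = 0$, which holds since $\Ext^*(\sP,\sP) \cong H^*(\P^n,\C)$ is concentrated in even degrees. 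Existence of a right convolution for a three-term complex with zero composition is automatic: lift $\sP_2 \to \sP_1$ through $\Cone(\sP_1 \to \sP_0)[-1]$ and cone again.

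For the autoequivalence I would apply the Bondal-Orlov criterion with the spanning class $\{\sP\} \cup \sP^\perp$. For $A \in \sP^\perp$ the FM kernel $\sP^\vee \boxtimes \sP$ annihilates $A$ (since its functor is $A \mapsto R\Hom(\sP,A) \otimes \sP$), so the defining triangle for $\sT_\sP$ immediately gives $\Phi_{\sT_\sP}(A) \cong A$ and the required Hom-compatibility follows tautologically. For $A = \sP$, convolving the complex with $\sP$ gives
\begin{equation*}
H^*(\P^n,\C)[-2] \otimes \sP \longrightarrow H^*(\P^n,\C) \otimes \sP \longrightarrow \sP,
\end{equation*}
and I would show this is quasi-isomorphic to $\sP[-2n]$. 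The Hom-compatibility on $\Hom^*(\sP,\sP)$ then follows from $\sP \otimes \omega_Y \cong \sP$ and Serre duality on $Y$, which match up the overall shifts on both sides.

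The main obstacle is the last computation. Under the identification $R\Hom(\sP,\sP) = \C[h]/(h^{n+1})$, the first differential is induced by $h \otimes 1 - 1 \otimes h$ acting on $\C[h]/(h^{n+1}) \otimes_\C \C[h]/(h^{n+1})$, and the second is a trace/multiplication map. Showing that the total convolution collapses to a single copy of $\sP$ in a single cohomological degree amounts to a Koszul-style cohomology computation for the regular element $h \otimes 1 - 1 \otimes h$ on this Artinian ring, and keeping track of the grading carefully enough to land on precisely $\sP[-2n]$ is the crux of the argument.
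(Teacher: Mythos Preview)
The paper does not give its own proof of this proposition; it is cited from \cite{HT}. The only place the paper touches the argument is in its proof of the generalization (Proposition~\ref{prop:conv}), where it says the Huybrechts--Thomas proof carries over and then checks uniqueness of the convolution via the cohomological criterion of Section~\ref{sec:convcpx}. Your uniqueness step is exactly that: for a three-term complex the only condition is $\Hom(\sP_2[1],\sP_0)\cong\Ext^1(\sP,\sP)=0$, and this is precisely what the paper verifies (in the relative setting it becomes $HH^1(X)=0$).

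Your overall strategy for the equivalence is also the original Huybrechts--Thomas one: use the spanning class $\{\sP\}\cup\sP^\perp$, observe that $\Phi_{\sT_\sP}$ is the identity on $\sP^\perp$ and a shift on $\sP$, and check $\Hom$-compatibility. Two points you should make explicit. First, why $\{\sP\}\cup\sP^\perp$ is a spanning class: this uses $\sP\otimes\omega_Y\cong\sP$ and Serre duality to get ${}^\perp\sP=\sP^\perp$, so that any $A$ orthogonal to the whole set lies in $\sP^\perp$ and hence satisfies $\Hom^*(A,A)=0$. Second, to upgrade fully faithful to equivalence you should note that $\Phi_{\sT_\sP}$ carries the spanning class to (shifts of) itself, so it is essentially surjective on a spanning class and therefore an equivalence. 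The ``Koszul-style'' computation you flag as the crux is correct in outline: multiplication by $h\otimes 1-1\otimes h$ on $\C[h]/(h^{n+1})\otimes\C[h]/(h^{n+1})$ is injective in the relevant degrees, and after the trace exactly one copy of $\C$ survives, in degree $-2n$.
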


\begin{remark} This is an analogue of a spherical object with $k=2$. One can clearly replace $H^*(\P^n,\C)$ above with a ring where the degree jumps are some arbitrary $k \ge 2$. Everything in this section works in this greater generality but for exposition purposes we restrict to the case $k=2$. 
\end{remark}

In analogy with spherical functors one can define a {$\P^n$ functor}\index{${\mathbb{P}}^n$ functor} as follows. A FM kernel $\sP \in D(X \times Y)$ is a $\P^n$ kernel if:
\begin{itemize}
\item $\sP_R \cong \sP_L [2n]$.
\item $\sH^* (\sP_R * \sP) \cong \Delta_* \O_X \otimes_\C H^*(\P^n,\C)$ where $\sH^*(\cdot)$ denotes the cohomology sheaves
\item there exists a map $\beta: \Delta_* \O_Y \rightarrow \Delta_* \O_Y [2]$ in $D(Y \times Y)$ so that 
$$I \beta I: \sP_R * (\Delta_* \O_Y) * \sP \rightarrow \sP_R * (\Delta_* \O_Y) * \sP [2]$$
induces an isomorphism (at the level of cohomology) between $n$ summands $\Delta_* \O_X$ on either side. 
\end{itemize}

Note that the third condition is the analogue of the fact that $\Ext^*(\sP,\sP)$ is isomorphic to $H^*(\P^n,\C)$ as a ring (rather than as a vector space). One could replace this condition with a ring condition on $\sP_R * \sP$ but the language above seems more convenient. 

\begin{remark} 
Nick Addington recently gave a similar definition of a $\P^n$ functor in \cite[Section 3]{Ad}. Although his definition is slightly more general the key ideas and properties are the same. 
\end{remark}

Then the analogue of Proposition \ref{prop:HT} is:

\begin{proposition}\label{prop:conv}
Let $\sP \in D(X \times Y)$ be a $\P^n$ functor and suppose $HH^1(X) = 0$ (where $HH^*$ denotes Hochschild cohomology). Then inside $D(Y \times Y)$ there is a complex 
\begin{equation}\label{eq:20}
(\sP * \sP_R)[-2] \xrightarrow{\beta I * I - I * I \beta} (\sP * \sP_R) \xrightarrow{adj} \Delta_* \O_Y
\end{equation}
that has a unique right convolution $\sT_{\sP} \in D(Y \times Y)$. This kernel induces an autoequivalence $\Phi_{\sT_{\sP}}: D(Y) \rightarrow D(Y)$. 
\end{proposition}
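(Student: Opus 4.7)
The statement splits into three parts: showing that the given sequence is actually a complex, establishing uniqueness of the right convolution, and verifying that $\Phi_{\sT_\sP}$ is an autoequivalence. For the first, I would argue that $adj \circ (\beta I * I - I * I \beta) = 0$. Both $\beta I * I$ and $I * I \beta$ act on $\sP * \sP_R \in D(Y \times Y)$ by placing the map $\beta: \Delta_* \O_Y \to \Delta_* \O_Y[2]$ on one of the two $Y$-factors. The adjunction map $adj: \sP * \sP_R \to \Delta_* \O_Y$ identifies these two factors via the diagonal, so after post-composition with $adj$ both collapse to $\beta \circ adj$ and their difference vanishes.

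For uniqueness of the right convolution I would apply the cohomological criterion from Section~\ref{sec:convcpx}. With $\sP_0 = \Delta_* \O_Y$, $\sP_1 = \sP * \sP_R$, and $\sP_2 = (\sP * \sP_R)[-2]$, this reduces to a handful of $\Hom$ vanishings among these objects. Fourier-Mukai adjunction rewrites them as $\Hom$ groups in $D(X \times X)$ involving $\sP_R * \sP$ and $\Delta_* \O_X$; the $\P^n$-functor condition $\sH^*(\sP_R * \sP) \cong \Delta_* \O_X \otimes H^*(\P^n,\C)$ together with the usual spectral sequence reduces everything to $\Hom(\Delta_* \O_X, \Delta_* \O_X[j])$ for small $j \ge 0$. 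The hypothesis $HH^1(X) = 0$ is exactly the vanishing of the $j=1$ term, which kills the only obstruction.

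The main stage is to show that $\Phi_{\sT_\sP}$ is an autoequivalence, for which I would follow the templates of Seidel-Thomas and Huybrechts-Thomas. First, I compute $\sT_\sP * \sP$ by convolving the defining complex on the right with $\sP$. The third condition on $\P^n$ functors --- that $I \beta I$ induces an isomorphism between $n$ summands $\Delta_* \O_X$ on each side of $\sP_R * (\Delta_* \O_Y) * \sP$ --- provides the key input: together with the cohomological structure of $\sP_R * \sP$ it identifies $\sT_\sP * \sP$ with a cohomological shift of $\sP$. Second, I show $\sT_\sP$ acts as the identity on the right orthogonal $\sP^\perp = \{\sF \in D(Y) : \Phi_{\sP_R}(\sF) = 0\}$: each $\sP * \sP_R$ in the complex annihilates such $\sF$, leaving only $\Delta_* \O_Y$ which acts as the identity. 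Combined with the semi-orthogonal decomposition associated to the $\P^n$ functor, these two facts yield full faithfulness. Essential surjectivity follows by constructing an explicit inverse from the analogous complex built out of $\sP_L \cong \sP_R[-2n]$.

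The main obstacle I anticipate is the computation of $\sT_\sP * \sP$. The identification $\sH^*(\sP_R * \sP) \cong \Delta_* \O_X \otimes H^*(\P^n,\C)$ holds only at the level of cohomology sheaves, while convolving the complex requires an on-the-nose structural statement about $\sP * \sP_R$. This is precisely where the map $\beta$ plays its essential role: in a manner analogous to the Deligne trick described for condition (vi) in Section~\ref{sec:remarks}, $\beta$ provides the rigidity needed to upgrade the cohomological identification to a genuine splitting of complexes, and the $HH^1(X) = 0$ hypothesis guarantees that the resulting lift is unique.
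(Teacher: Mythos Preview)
Your approach is essentially the same as the paper's. The paper's proof is extremely brief: it says the argument of \cite{HT} for Proposition~\ref{prop:HT} generalizes directly, and that the only point requiring attention is uniqueness of the convolution. For the latter it invokes the criterion of Section~\ref{sec:convcpx}, which for this three-term complex reduces to the single vanishing $\Hom((\sP * \sP_R)[-2], \Delta_* \O_Y[-1]) = 0$; adjunction turns this into $\Hom(\sP,\sP[1]) \cong \Hom(\sP_L * \sP, \Delta_* \O_X[1])$, and the cohomological description of $\sP_L * \sP \cong (\sP_R * \sP)[-2n]$ forces this to land in $HH^{\text{odd}}(X)$, where $HH^1(X)=0$ and $HH^{<0}(X)=0$ finish the job. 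Your Part~2 is exactly this computation.

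Two minor comments. First, the paper does not separately argue that the sequence is a complex or spell out the autoequivalence argument --- both are absorbed into ``generalizes directly from \cite{HT}'' --- so your Parts~1 and~3 are a faithful unpacking of that citation rather than something new. Second, in your final paragraph you suggest that $HH^1(X)=0$ is also needed to make the Deligne-type splitting in the autoequivalence argument go through; in the paper (and in \cite{HT}) the hypothesis is used only for uniqueness of the convolution, and the equivalence step is a direct computation with the complex that does not require it again. Your invocation of a ``semi-orthogonal decomposition associated to the $\P^n$ functor'' is also slightly more structure than \cite{HT} actually uses --- their argument is closer to a spanning-class check --- but this does not affect correctness.
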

\begin{proof}
The proof of Proposition \ref{prop:HT} given in \cite{HT} generalizes directly. The only tricky point is to show that the convolution is unique. Using section \ref{sec:convcpx} it suffices to check that $\Hom((\sP * \sP_R)[-2], \Delta_* \O_Y) = 0$. Now 
\begin{align}
\Hom((\sP * \sP_R)[-2], \Delta_* \O_Y [-1]) 
&\cong \Hom(\sP, \sP [1]) \nonumber \\
&\cong \Hom(\sP_L * \sP, \Delta_* \O_X [1]) \nonumber \\
&\cong \Hom(\Delta_* \O_X \oplus \Delta_* \O_X [2], \Delta_* \O_X [1]) \nonumber \\
&\cong HH^1(X) \oplus HH^{-1}(X). \nonumber
\end{align}
Since $HH^{-1}(X)=0$ the result follows. 
\end{proof}
\begin{remark} 
I would like to thank Nick Addington for pointing out that one should add the condition $HH^1(X)=0$ as part of the hypothesis in Proposition \ref{prop:conv}. However, I suspect that the convolution of (\ref{eq:20}) is unique even if $HH^1(X) \ne 0$ (this condition is sufficient but not necessary). However, the proof involves a lot of diagram chasing so we leave it up to the reader as an exercise/conjecture. 
\end{remark}

It turns out $\P^n$ functors are also closely related to categorical $\sl_2$ actions. To see this consider a geometric categorical $\sl_2$ action where $Y(\l) = \emptyset$ for $\l > n+1$. Denote $Y := Y(n-1)$ and $X := Y(n+1)$ and let $\sP := \sF(n-1) \in D(X \times Y)$. Furthermore, let $\beta: \Delta_* \O_Y \rightarrow \Delta_* \O_Y [2]$ be the map defined using the deformations $\tY(\l)$ of $Y(\l)$ as in section \ref{sec:remarks}. 

We claim that $\sP = \sF(n-1)$ is a $\P^n$-kernel. The first two conditions are easy consequences of conditions (iii) and (v) in section \ref{sec:def}. The last condition is harder to see but essentially follows from condition (vi). 

Now, we have 
$$\sT(n-1) \cong \Cone(\sF^{(n)} * \sE [-1] \rightarrow \sF^{(n-1)}) \in D(Y(n-1) \times Y(-n+1))$$
since $\sF^{(n+s)} * \sE^{(s)} = 0$ for $s > 1$. And likewise 
$$\sT(-n+1) \cong \Cone(\sE^{(n)} * \sF [-1] \rightarrow \sE^{(n-1)}) \in D(Y(-n+1) \times Y(n-1)).$$
Thus $\sT(-n+1) * \sT(n-1) \in D(Y(n-1) \times Y(n-1))$ is given by the right convolution of a complex:
$$\sE^{(n)} * \sF * \sF^{(n)} * \sE [-2] \rightarrow 
\begin{matrix} \sE^{(n)} * \sF * \sF^{(n-1)} \\ \oplus \sE^{(n-1)} * \sF^{(n)} * \sE [-1] \end{matrix} \rightarrow \sE^{(n-1)} * \sF^{(n-1)}$$
which simplifies to 
$$\sF * \sE \otimes_\C H^\star(\P^n) [-2] \rightarrow 
\begin{matrix} \sF * \sE \otimes_\C H^\star(\P^{n-1}) [-1] \\
\oplus \sF * \sE \otimes_\C H^\star(\P^{n-1}) [-1] \end{matrix}
\rightarrow \id \oplus \sF * \sE \otimes H^\star(\P^{n-2})$$
where we use the convention for $H^\star(\P^n)$ from section \ref{sec:def} (i.e. symmetric with respect to degree zero). Note that this simplification uses some basic commutation relations between $\sE$s and $\sF$s (Lemma 4.2 of \cite{CKL3}) which follow formally from the relations in section \ref{sec:def}.  

It is not too hard to check that the second map is surjective on summands of the form $\sF*\sE$ while the first map is injective on $n$ such summands. It follows that this complex is homotopic to one of the form 
$$\sF * \sE [-n-2] \rightarrow \sF * \sE [-n] \rightarrow \id.$$
The second map in this complex is unique and hence must be the adjunction map (up to a non-zero multiple). The first map is a little harder to deduce but it turns out to be equal to $\beta I * I - I * I \beta$ where $\beta$ is the map in equation (\ref{eq:7}). We conclude that:

\begin{proposition}\label{prop:main3} Given a geometric categorical $\sl_2$ action with $Y(\l)=\emptyset$ for $\l > n+1$ it follows that $\sT(-n+1) * \sT(n-1) \in D(Y(n-1) \times Y(n-1))$ is isomorphic to the unique right convolution of
\begin{equation}\label{eq:14}
\sF * \sE [-n-2] \xrightarrow{\beta I * I - I * I \beta} \sF * \sE [-n] \xrightarrow{adj} \id.
\end{equation}
Moreover, the $\sl_2$ action induces a $\P^n$ kernel $\sP := \sF(n-1) \in D(X \times Y)$ where $Y = Y(n-1)$ and $X = Y(n+1)$ such that the induced $\P^n$ twist $\sT_\sP$ is isomorphic to $\sT(-n+1) * \sT(n-1)$. 
\end{proposition}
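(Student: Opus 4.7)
The plan is to carry out the computation outlined in the paragraph preceding the proposition, making each step precise. The hypothesis $Y(\l)=\emptyset$ for $\l > n+1$ forces $\sF^{(n+s)} * \sE^{(s)} = 0$ for $s \ge 2$, since such a convolution is supported on a product passing through $Y(n+2s-1) = \emptyset$. Consequently Rickard's complex $\Theta_*$ defining $\sT(n-1)$ truncates to the two-term cone $\sT(n-1) \cong \Cone(\sF^{(n)}*\sE[-1] \to \sF^{(n-1)})$, and the symmetric truncation gives $\sT(-n+1) \cong \Cone(\sE^{(n)}*\sF[-1] \to \sE^{(n-1)})$.

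I would then form $\sT(-n+1) * \sT(n-1)$ as the total complex of the resulting $2\times 2$ bicomplex and apply the formal $\sl_2$ commutation identities (Lemma 4.2 of CKL3) to rewrite every $\sE^{(a)}*\sF^{(b)}$-term as a sum of copies of $\sF*\sE \otimes H^\star(\P^k)$, plus copies of $\id$ in the appropriate position. This produces the three-term complex displayed in the text. A degree count using the convention for $H^\star(\P^k)$ from section \ref{sec:def} shows that the middle differential is surjective onto the $\sF*\sE \otimes H^\star(\P^{n-2})$ part of the right-hand term and injective on the matching $\sF*\sE$-summands of the left-hand term, while the rightmost map hits $\id$ by adjunction. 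Collapsing these cancellations reduces the bicomplex, up to homotopy, to $\sF*\sE[-n-2] \to \sF*\sE[-n] \to \id$. Uniqueness of the right convolution then follows from the criterion of section \ref{sec:convcpx}: the required $\Hom$-vanishings reduce, via the adjunctions of condition (iii), to computations already available from the geometric categorical $\sl_2$ structure.

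The main obstacle is identifying the surviving leftmost differential with $\beta I * I - I * I \beta$. There are two natural candidates for a map $\sF*\sE[-n-2] \to \sF*\sE[-n]$, arising from the two independent deformations $\tY(n-1)$ and $\tY(-n+1)$ used in building $\sT(n-1)$ and $\sT(-n+1)$ respectively; each contributes a class of the form (\ref{eq:7}), and condition (vi) guarantees that both classes act nontrivially on the relevant $\sE^{(2)}$-summand that survives after collapse. Because the two deformations enter on opposite sides of the convolution, the Koszul sign in the total complex forces them to combine as $\beta I * I - I * I \beta$, up to a non-zero scalar absorbed into the normalization of $\beta$. This step categorifies the standard observation that, for spherical and $\P^n$-twists, the relevant extension class is a commutator of Chern-class-like endomorphisms on either side.

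For the second statement I would verify that $\sP := \sF(n-1) \in D(Y(n+1) \times Y(n-1))$ satisfies the three defining conditions of a $\P^n$-kernel. The adjunction-shift condition is immediate from condition (iii). The cohomological identity $\sH^*(\sP_R * \sP) \cong \Delta_* \O_Y \otimes H^\star(\P^n)$ follows by iterating condition (v) starting at weight $\l = -n+1$: the correction piece that would normally live over some $Y(\l)$ with $\l > n+1$ vanishes by hypothesis, so the expected copies of $\id$ assemble into $H^\star(\P^n)$. The required map $\beta$ is supplied by the deformation $\tY(n-1)$ through (\ref{eq:7}), and its nondegeneracy on the relevant $\sE^{(2)}$-piece is condition (vi). Once $\sF(n-1)$ is verified to be a $\P^n$-kernel, Proposition \ref{prop:conv} produces $\sT_\sP$ as the unique right convolution of precisely the complex (\ref{eq:14}), so the isomorphism $\sT_\sP \cong \sT(-n+1) * \sT(n-1)$ follows from the first part of the proposition.
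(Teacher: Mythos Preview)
Your proposal follows essentially the same route as the paper's argument, which is laid out in the paragraphs immediately preceding the proposition rather than in a separate proof environment: truncate both Rickard complexes to two terms, convolve to get a $2\times 2$ bicomplex, simplify each entry via the formal commutation identities (Lemma~4.2 of \cite{CKL3}), cancel matching $\sF*\sE$-summands, and then identify the two surviving differentials. Your verification that $\sP=\sF(n-1)$ is a $\P^n$-kernel via conditions (iii), (v), (vi) and the final appeal to Proposition~\ref{prop:conv} also match the paper.

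There is one point where your explanation departs from the actual mechanism. You attribute the two $\beta$'s in $\beta I * I - I * I \beta$ to ``two independent deformations $\tY(n-1)$ and $\tY(-n+1)$ used in building $\sT(n-1)$ and $\sT(-n+1)$.'' This is not right: the Rickard differentials are adjunction maps and do not involve the deformations at all, and moreover the final complex lives in $D(Y(n-1)\times Y(n-1))$, so both occurrences of $\beta$ are the \emph{same} class from equation~(\ref{eq:7}) for $Y(n-1)$, acting on the left and right outer factors respectively. The sign arises from the bicomplex totalization, not from comparing two different deformation classes. The paper itself does not spell out this identification either (it simply says ``it turns out to be equal to $\beta I * I - I * I \beta$''), so this is a minor misconception in your heuristic rather than a gap in the overall strategy, but you should correct it before trying to make the step rigorous.
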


\begin{remark}
Notice that unlike the spherical functor case, a $\P^n$ functor does not induce a geometric categorical $\sl_2$ action because it does not give us the spaces $Y(\l)$ for $\l \ne n-1,n+1$. 
\end{remark}

One can imagine trying to compute $\sT(-n+1) * \sT(n-1)$ even if $Y(\l)$ is not empty for $\l > n+1$. Indeed, one can probably obtain some reasonable expressions for these kernels in terms on complexes where all the terms are of the form $\sF^{(s)} * \sE^{(s)}$ for some $s$. 

One could also try to define a Grassmannian $\bG(k,N)$ object $\sP$ (generalizing $\P^n$ objects). The data for this should contain spaces $Y, X_1, \dots, X_k$ together with kernels $\sP_i \in D(X_i \times Y)$ for $i=1, \dots, k$. This is because, imitating the construction above, one needs spaces 
$$Y:= Y(n-1), X_1 := Y(n+1), \dots, X_k := Y(n-1+2k)$$ 
together with kernels $\sF^{(i)} \in D(Y(n-1+2i) \times Y(n-1))$ for $i=1,\dots,k$. 

\subsection{Infinite twists and some geometry}

Proposition \ref{prop:main3} when $n=1$ states that, in a geometric categorical $\sl_2$ action, if $Y(\l) = \emptyset$ for $\l > 2$ then $\sT^2 \in D(Y(0) \times Y(0))$ is the right convolution of the complex (\ref{eq:14}). Hence $\sT^{-2}$ is given by the adjoint which is the left convolution of the complex 
$$\id \xrightarrow{adj} \sF * \sE [1] \xrightarrow{\beta I * I - I * I \beta} \sF * \sE [3].$$
More generally, if you look at $\sT^{-2 \ell} \in D(Y(0) \times Y(0))$ then a little bit of work shows that it is given as the unique left convolution of
$$\id \xrightarrow{adj} \sF * \sE [1] \xrightarrow{\beta I * I - I * I \beta} \dots \xrightarrow{\beta I * I + I * I \beta} \sF * \sE [2 \ell -1] \xrightarrow{\beta I * I - I * I \beta} \sF * \sE [2 \ell +1].$$
If we let $\ell \rightarrow \infty$ then this complex converges to 
\begin{equation}\label{eq:15}
\id \xrightarrow{adj} \sF * \sE [1] \rightarrow \sF * \sE [3] \rightarrow \dots \rightarrow \sF * \sE [2 \ell -1] \rightarrow \sF * \sE [2 \ell +1] \rightarrow \dots
\end{equation}
where the differentials after the left hand adjunction map alternate between $(\beta I * I - I * I \beta)$ and $(\beta I * I - I * I \beta)$. Here we say that a sequence of complexes converges if it eventually stabilizes in any given degree (see, for instance, \cite[Sec. 3]{Roz} for more details). We denote the left convolution of (\ref{eq:15}) by $\sT^{-\infty}$. 

The object $\sT^{-\infty}$ lives naturally in $D^-(Y(0) \times Y(0))$ which is the bounded above derived category of coherent sheaves. This might seem strange since the complex (\ref{eq:15}) is bounded below. However, $\sF * \sE$ is some bounded complex and $\sF * \sE [2 \ell -1]$, when you perform the left convolution, is shifted by $[2 \ell -1- \ell] = [\ell-1]$ so as $\ell \rightarrow \infty$ this is shifted lower and lower in cohomology which explains why it belongs to $D^-$ and not $D^+$. 

Now consider the geometric categorical $\sl_2$ action on $Y(0) := T^\star \P^1$ where $Y(2) = Y(-2)$ are points and $\sE \in D(Y(0) \times \mbox{pt})$ is given by the twisted zero section $\O_{\P^1}(-1)$ and the same with $\sF \in D(\mbox{pt} \times Y(0))$. The map $\beta: \sE \rightarrow \sE [2]$ is the unique map in $\Ext^2_{T^\star \P^1}(\O_{\P^1}(-1), \O_{\P^1}(-1))$. Now
$$\sF * \sE \cong \O_{\P_1}(-1) \boxtimes \O_{\P_1}(-1) \in D(T^\star \P^1 \times T^\star \P^1).$$
This means that $\sT^{-\infty} \in D^-(T^\star \P^1 \times T^\star \P^1)$ is a complex whose cohomology $\sH^i(\sT^{-\infty})$ isomorphic to 
\begin{itemize}
\item $\O_{\P_1}(-1) \boxtimes \O_{\P_1}(-1)$ if $i < 0$ 
\item is a sheaf supported on $\P^1 \times \P^1 \cup \Delta \subset T^\star \P^1 \times T^\star \P^1$ if $i=0$ 
\item $0$ if $i > 0$.
\end{itemize}

Recall the map $p(1,2): T^\star \P^1 \rightarrow \overline{B(1,2)}$. In this case $\overline{B(1,2)}$ is just the quadric $Q \subset \C^3$ and $p(1,2)$ (or $p$ for short) is the map which collapses the zero section inside $T^\star \P^1$ to a point. The composition $p^* p_*$ does not preserve the bounded derived category since $Q$ is singular but it does preserve the bounded above derived category. 

\begin{proposition}\label{prop:main4}
The composition $p^* p_*: D^-(T^\star \P^1) \rightarrow D^-(T^\star \P^1)$ is induced by the kernel $\sT^{-\infty} \in D^-(T^\star \P^1 \times T^\star \P^1)$. 
\end{proposition}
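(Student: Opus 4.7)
The plan is to identify $p^*p_*$ with $\Phi_{\sT^{-\infty}}$ by matching Fourier-Mukai kernels. By standard Fourier-Mukai calculus, the kernel $K$ representing $p^*p_*$ is the derived object $\iota_*\bigl(\O_{T^\star\P^1} \otimes^L_{\O_Q} \O_{T^\star\P^1}\bigr)$, where the two copies of $\O_{T^\star\P^1}$ are viewed as $\O_Q$-modules via $p$ and $\iota$ is the natural inclusion of the set-theoretic fibre product $T^\star\P^1 \times_Q T^\star\P^1 \cong \P^1 \times \P^1 \cup \Delta$ into $T^\star\P^1 \times T^\star\P^1$. Since $p$ is an isomorphism off the vertex of $Q$, $K$ restricts to $\O_\Delta$ away from $\P^1 \times \P^1$, so all higher cohomology sheaves of $K$ are concentrated on $\P^1 \times \P^1$, matching the described behavior of $\sT^{-\infty}$.

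Next I would compute $\sH^{-i}(K)$ for $i > 0$ explicitly. Since $Q = \{xy = z^2\} \subset \A^3$ is a hypersurface and $p_*\O_{T^\star\P^1} = \O_Q$ (a rational double point resolution), $\O_{T^\star\P^1}$ viewed on $Q$ is a maximal Cohen-Macaulay module. Eisenbud's theorem on hypersurface singularities then provides a $2$-periodic matrix factorization resolution of it. Tensoring this resolution against the other copy of $\O_{T^\star\P^1}$ and identifying the resulting cohomology sheaves using the natural Euler sequence on $\P^1$ yields $\sH^{-i}(K) \cong \O_{\P^1}(-1) \boxtimes \O_{\P^1}(-1)$ for each $i > 0$, matching the cohomology of $\sT^{-\infty}$.

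Finally I would promote this cohomology-level agreement to an isomorphism of objects by exhibiting $K$ as the left convolution of the same complex (\ref{eq:15}) whose convolution defines $\sT^{-\infty}$, and then invoking uniqueness of such convolutions as in section \ref{sec:convcpx}. The leading $\O_\Delta$ corresponds to the counit $p^*p_* \to \id$, while each subsequent $\sF * \sE[2i+1] \cong \O_{\P^1}(-1) \boxtimes \O_{\P^1}(-1)[2i+1]$ arises from the $i$-th syzygy in the $2$-periodic resolution. The main obstacle is matching the differentials of the matrix factorization with the operators $\beta I * I - I * I \beta$ from the excerpt: this amounts to identifying the natural boundary maps of the hypersurface resolution with the deformation-theoretic map $\beta$ coming from $\wt{T^\star\P^1} \to \A^1$, which in this situation is precisely the smoothing of the ordinary double point of $Q$. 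This comparison, together with the $\Ext$-vanishing required to pin down the convolution uniquely, is the delicate step I would need to execute carefully.
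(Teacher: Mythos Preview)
Your outline takes a genuinely different route from the paper, and as written it has a real gap at the step you yourself flag as ``delicate.''

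The paper does not compute the kernel $\sK$ for $p^*p_*$ at all. Instead it builds a comparison map $\hat\rho:\sK\to\sT^{-\infty}$ directly, using the single observation that $p_*\O_{\P^1}(-1)=0$ forces $\sK*\sT^{-1}\cong\sK$; composing the counit $\sK\to\O_\Delta$ with this repeatedly produces maps $\sK\to\sT^{-2\ell}$ for every $\ell$, hence a map to the limit. It then shows $\Cone(\hat\rho)$ acts by zero on all of $D^-(T^\star\P^1)$ by reducing to two kinds of objects: those of the form $p^*\sN'$ (on which both functors agree tautologically) and those with $p_*\sN=0$, which by a separate lemma are sums of $\O_{\P^1}(-1)$ and are killed by both functors. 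No explicit resolution of $\sK$ and no matching of differentials is ever needed.

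Your plan, by contrast, hinges on identifying two objects with the same cohomology sheaves. That is not enough, and you try to close the gap by invoking uniqueness of the left convolution of (\ref{eq:15}). Two problems: first, the criterion in section~\ref{sec:convcpx} is stated for \emph{finite} complexes, and passing to an unbounded one in $D^-$ requires a separate limit argument you do not supply. Second, even for finite truncations the required $\Hom$-vanishing is not automatic here: for $i,j\ge 1$ the terms are $\O_{\P^1}(-1)\boxtimes\O_{\P^1}(-1)$ up to shift, and on $T^\star\P^1\times T^\star\P^1$ one has $\Ext^{2}\bigl(\O_{\P^1}(-1)\boxtimes\O_{\P^1}(-1),\,\O_{\P^1}(-1)\boxtimes\O_{\P^1}(-1)\bigr)\ne 0$, so the hypotheses of that section need to be checked carefully and may fail. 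Finally, the step you call the ``main obstacle''---identifying the matrix-factorization boundaries with the alternating maps $\beta I*I\mp I*I\beta$---is essentially the entire content of the comparison, and nothing in your sketch indicates how to carry it out. Without either a direct map between the two kernels or a rigorous uniqueness statement for the infinite Postnikov tower, the argument does not close.

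Your matrix-factorization computation of the cohomology sheaves is a nice heuristic confirmation, but to finish you would be better served by producing an actual morphism $\sK\to\sT^{-\infty}$ (or the reverse) and checking it is an isomorphism, which is exactly what the paper does.
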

\begin{proof}
The pushforward and pullback maps from $T^\star \P^1$ to $Q$ are given by the graph $\Gamma_p$ of $p$ which is the kernel $\O_{\Gamma_p} \in D(T^\star \P^1 \times Q)$. We denote by $\sK$ the convolution $\O_{\Gamma_p} * \O_{\Gamma_p} \in D^-(T^\star \P^1 \times T^\star \P^1)$ which induces $p^* p_*$. 

The adjoint map $p^* p_* (\cdot) \rightarrow (\cdot)$ corresponds to a natural map $\rho: \sK \rightarrow \O_{\Delta}$. Now, since $p_* \O_{\P^1}(-1) = 0$ it follows that 
$$\sK * \sT^{-1} \cong \Cone(\sK * \id \xrightarrow{I adj} \sK * \sF * \sE) \cong \sK.$$
So applying this to $\rho$ we get a map $\sK \rightarrow \sT^{-1}$. Repeating this and taking the limit we obtain a morphism $\hat{\rho}: \sK \rightarrow \sT^{-\infty}$. We would like to show that $\Cone(\hat{\rho})=0$. To do this we show that it acts by zero on any object in $D^-(T^\star \P^1)$. 

Let $\sM \in D^-(T^\star \P^1)$ and consider the exact triangle
$$p^* p_* \sM \xrightarrow{adj} \sM \rightarrow \Cone(adj).$$
Notice that $p_* \Cone(adj) = 0$ so it suffices to show that $\Phi_{\Cone(\hat{\rho})} (\sN)$ for any $\sN$ where either $\sN = p^* \sN'$ for some $\sN' \in D^-(Q)$ or $p_* \sN = 0$. 

If $\sN = p^* \sN'$ then
\begin{equation}\label{eq:17}
\Phi_{\Cone(\hat{\rho})} (p^* \sN') \cong \Cone(\Phi_{\sK} (p^* \sN') \rightarrow \Phi_{\sT^{-\infty}} (\sN')).
\end{equation}
Now $\Phi_{\sK} (p^* \sN') \cong p^* p_* p^* \sN' \cong p^* \sN'$ and $\Phi_{\sT^{-\infty}} (p^* \sN') \cong p^* \sN'$ since, by a straight-forward calculation, $\Phi_{\sE} (p^* \sN') = 0$. It is not hard to see that the map in (\ref{eq:17}) above induces an isomorphism and hence $\Phi_{\Cone(\hat{\rho})} (p^* \sN') = 0$.

On the other hand, suppose $p_* \sN = 0$ then $\sN$. Since the fibres of $p$ are at most one-dimensional this means $p_* \sH^i(\sN) = 0$ for any $i$ and so we can assume $\sN$ is a sheaf. But then by Lemma \ref{lem:1} below $\sN$ is a direct sum of $\O_{\P^1}(-1)$. So it suffices to show that $\Phi_{\Cone(\hat{\rho})}(\O_{\P^1}(-1))=0$. 

To see this we check that $\Phi_{\sK}(\O_{\P^1}(-1))=0$ and $\Phi_{\sT^{-\infty}}(\O_{\P^1}(-1))=0$. The first follows since $p_* \O_{\P^1}(-1)=0$. On the other hand, it is a standard exercise to check that $\Phi_{\sT^{-1}}(\O_{\P^1}(-1)) \cong \O_{\P^1}(-1)[1]$. This means that 
$$\Phi_{\sT^{-2\ell}}(\O_{\P^1}(-1)) \cong \O_{\P^1}(-1))[2\ell]$$ 
and hence, $\Phi_{\sT^{-\infty}}(\O_{\P^1}(-1)) = 0$. This completes the proof. 
\end{proof}

\begin{lemma}\label{lem:1}
If $\sM$ is a coherent sheaf on $T^\star \P^1$ and $p_* \sM = 0$ then $\sM \cong \O_{\P^1}(-1)^{\oplus \ell}$. 
\end{lemma}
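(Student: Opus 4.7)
The plan is to analyse the $I$-adic filtration of $\sM$ along the zero section $E \subset T^\star \P^1$ and to obtain incompatible degree estimates on its graded pieces.

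First, since $p$ is an isomorphism away from $E \cong \P^1$, the hypothesis $p_* \sM = 0$ (i.e.\ $Rp_*\sM = 0$, all functors in the paper being derived) forces $\sM$ to be set-theoretically supported on $E$. Let $I \subset \O_{T^\star \P^1}$ denote the ideal sheaf of $E$ and $i : E \hookrightarrow T^\star \P^1$ the inclusion. Coherence of $\sM$ gives a smallest $n \ge 0$ with $I^{n+1}\sM = 0$; the goal is to show $n = 0$.

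Next I would extract constraints from two distinguished triangles. Put $\sK := \mathrm{Ann}_\sM(I) = i_* H$ for a sheaf $H$ on $\P^1$, and let $F_0$ be the sheaf on $\P^1$ with $i_* F_0 = \sM/I\sM$. Applying $Rp_*$ to $0 \to \sK \to \sM \to \sM/\sK \to 0$ and using $Rp_*\sM = 0$ yields $Rp_*(\sM/\sK) \cong R\Gamma(\P^1, H)[1]$; matching cohomological degrees (the LHS is concentrated in $[0,1]$ since the fibres of $p$ have dimension $\le 1$, while the RHS lies in $[-1,0]$) forces $H^0(\P^1, H) = 0$, so $H$ is a direct sum of line bundles $\O(c_i)$ with all $c_i < 0$. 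Dually, applying $Rp_*$ to $0 \to I\sM \to \sM \to i_* F_0 \to 0$ gives $R\Gamma(\P^1, F_0) \cong Rp_*(I\sM)[1]$, whose degree-$1$ component forces $H^1(\P^1, F_0) = 0$; hence $F_0 = T_0 \oplus \bigoplus \O(d_j)$ with $T_0$ torsion and all $d_j \ge -1$.

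Now I would suppose $n \ge 1$ for contradiction and examine the top layer $F_n := I^n\sM$. Because $I \cdot I^n\sM = I^{n+1}\sM = 0$, we have $F_n \subset \sK$, so $F_n \hookrightarrow H$ as sheaves on $\P^1$; in particular $F_n$ is torsion-free with every line-bundle summand of strictly negative degree. Simultaneously, iterated surjectivity of multiplication presents $F_n$ as a quotient of $F_0 \otimes (I/I^2)^{\otimes n} \cong F_0 \otimes \O_{\P^1}(2n)$, using that the conormal bundle of the $(-2)$-curve $E$ is $I/I^2 \cong \O_{\P^1}(2)$. Since the torsion summand $T_0 \otimes \O(2n)$ maps trivially to the torsion-free $F_n$, the sheaf $F_n$ is in fact a torsion-free quotient of $\bigoplus \O(d_j + 2n)$ with $d_j + 2n \ge 2n - 1 \ge 1$. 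The slope inequality on $\P^1$ --- a nonzero map $\O(a) \to \O(b)$ requires $b \ge a$, applied to each projection of $F_n$ onto its line-bundle summands --- then forces every summand of $F_n$ to have degree $\ge 1$. Together with the negative-degree bound from $F_n \hookrightarrow H$ this yields $F_n = 0$, contradicting the minimality of $n$.

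With $n = 0$ established, $\sM = i_* F$ for a coherent sheaf $F$ on $\P^1$ with $R\Gamma(\P^1, F) = 0$. Since torsion on $\P^1$ contributes to $H^0$, $F$ is locally free, and the vanishing of both $H^0$ and $H^1$ on each summand $\O(d)$ forces $d = -1$. Hence $\sM \cong \O_{\P^1}(-1)^{\oplus \ell}$. The main obstacle I anticipate is the degree-matching in the third paragraph: the contradiction rests on the numerical coincidence $2n-1 \ge 1$ versus $c_i < 0$, which ultimately comes from the identification $I/I^2 \cong \O_{\P^1}(2)$ (equivalently the self-intersection $E \cdot E = -2$). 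The bookkeeping must be done carefully, but once the degree shifts are aligned the argument is essentially forced.
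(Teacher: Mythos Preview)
Your argument is correct and takes a genuinely different route from the paper. The paper proceeds by induction on the nilpotency order: it shows directly that $H^0(\sM/I\sM)=0$ by tensoring the divisor sequence for $E=\P^1$ with $\sM$ and using the identification $\O_{T^\star\P^1}(-E)\cong\pi^*\O_{\P^1}(2)$ together with the projection formula along $\pi:T^\star\P^1\to\P^1$; this forces $p_*(I\sM)=0$ as well, and one then reassembles $\sM$ from $\sM/I\sM$ and $I\sM$ using $\Ext^1(\O_{\P^1}(-1),\O_{\P^1}(-1))=0$. You instead pin the top and bottom of the $I$-adic filtration between incompatible degree bounds: $H^0$-vanishing on $\mathrm{Ann}_I(\sM)$ forces its summands to have negative degree, $H^1$-vanishing on $\sM/I\sM$ forces its free summands to have degree $\ge -1$, and the conormal identification $I/I^2\cong\O_{\P^1}(2)$ then makes the surjection $F_0\otimes\O(2n)\twoheadrightarrow F_n$ push all degrees of $F_n$ up to at least $2n-1\ge 1$ while the inclusion $F_n\hookrightarrow H$ keeps them negative. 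Both arguments hinge on the same numerical input $E\cdot E=-2$; yours avoids the auxiliary projection $\pi$ and the $\Ext^1$ splitting step, giving a cleaner one-shot contradiction, while the paper's version has the mild advantage of exhibiting explicitly that each graded piece is already a sum of $\O_{\P^1}(-1)$'s along the way.
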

\begin{proof}
Suppose $p_* \sM = 0$. Then $\sM$ is set theoretically supported on $\P^1$ because $p$ is an isomorphism away from $\P^1$. Now any sheaf of $\P^1$ is a direct sum of structure sheaf and line bundles. So if $\sM$ were scheme theoretically supported on $\P^1 \subset T^\star \P^1$ then the result would follow because $\O_{\P^1}(-1)$ is the only sheaf with vanishing cohomology. 

More generally, consider the short exact sequence 
\begin{equation}\label{eq:16}
0 \rightarrow \sM'' \rightarrow \sM \rightarrow \sM' \rightarrow 0
\end{equation}
where $\sM'$ is the quotient of $\sM$ scheme theoretically supported on $\P^1$ (equivalently, $\sM''$ is the part of $\sM$ killed by $f$ where $f$ is the local equation defining $\P^1 \subset T^\star \P^1$). It would suffice to show $p_* \sM' = 0$ since then $p_* \sM'' = 0$ and we can proceed up induction to conclude that $\sM''$ and $\sM'$ are both direct sums of $\O_{\P^1}(-1)$ and then the same must be true of $\sM$ since $\Ext^1(\O_{\P^1}(-1), \O_{\P^1}(-1)) = 0$.

We now prove that $p_* \sM' = 0$. From the long exact sequence induced by (\ref{eq:16}) and the fact that $Q$ is affine it suffices to show that $H^0(\sM') = 0$. On the other hand, take the standard short exact sequence 
$$0 \rightarrow \O_{T^\star \P^1}(-\P^1) \rightarrow \O_{T^\star \P^1} \rightarrow \O_{\P^1} \rightarrow 0$$ 
and tensor it with $\sM$. Then $\sH^0(\sM \otimes \O_{\P^1}) \cong \sM'$ so from the long exact sequence it suffices to show that $H^1(\O_{T^\star \P^1}(-\P^1) \otimes \sM) = 0$. But $\O_{T^\star \P^1}(-\P^1) \cong \pi^* \O_{\P^1}(2)$ where $\pi: T^\star \P^1 \rightarrow \P^1$ is the standard projection. Then, by the projection formula
$$\pi_* (\O_{T^\star \P^1}(-\P^1) \otimes \sM) \cong \pi_* (\pi^* \O_{\P^1}(2) \otimes \sM) \cong \O_{\P^1}(2) \otimes \pi_*(\sM).$$
Now $\pi_* \sM$ on $\P^1$ has no cohomology so it must be isomorphic to $\O_{\P^1}(-1)^{\oplus \ell}$ for some $\ell$. Subsequently $\O_{\P^1}(2) \otimes \pi_*(\sM) \cong \O_{\P^1}(1)^{\oplus \ell}$ has no higher cohomology and hence $H^1(\O_{T^\star \P^1}(-\P^1) \otimes \sM) = 0$.
\end{proof}

\begin{remark} There are two points worth noting here. First is that the limit $\sT^{-\infty}$ is a projector (i.e. $\sT^{-\infty} * \sT^{-\infty} \cong \sT^{-\infty}$) even though $\sT^{-\ell}$ is invertible for any $\ell$. Secondly, the kernel $\sT^{-\infty}$, which is defined using a formal construction not involving $p$, has a geometric description as the kernel inducing $p^* p_*$. 
\end{remark}

More generally, one can consider the cotangent bundle to the full flag variety $T^\star Fl(\C^N)$ where there is a straight-forward generalization of the discussion above (which was the case $N=2$). From section \ref{sec:ex2}, one can construct a categorical $\sl_N$ action so that $D(T^\star Fl(\C^N))$ corresponds to the zero weight space. This induces, a braid group action on $D(T^\star Fl(\C^N))$ generated by kernels $\sT^{-1}_i := \Cone(\id \rightarrow \sF_i * \sE_i)$ for $i=1, \dots, N-1$. 

Then the arguments above can be used to show that $\sT_i^{-\infty} := \lim_{\ell \rightarrow \infty} \sT_i^{-2\ell}$ is well defined. Moreover, $\sT_i^{-\infty}$ is isomorphic to the kernel which induces $p_i^* p_{i*}$ where $p_i$ is the projection from $T^\star Fl(\C^N)$ given by forgetting $V_i$. 

\subsubsection{Non-generalizations}

Consider again the situation in Proposition \ref{prop:main3} where $n \ge 0$ is now arbitrary. This time $\sT^{-2\ell} \in D(Y(n-1) \times Y(n-1))$ is isomorphic to the left convolution of the complex 
\begin{align*}
& \id \rightarrow \sF * \sE [n] \rightarrow \sF * \sE [n+2] \rightarrow \sF * \sE [3n+2] \rightarrow \sF * \sE [3n+4] \rightarrow \dots \\
& \dots \rightarrow \sF * \sE [(2\ell-1)(n+1)-1] \rightarrow \sF * \sE [(2\ell-1)(n+1)+1]
\end{align*}
where, after the left-most adjunction map, the maps alternate between $(\beta I * I - I * I \beta)$ and $\sum_{i=0}^n (\beta^i I * I + I * I \beta^{n-i})$. This complex also has an obvious limit as $\ell \rightarrow \infty$ which we denote $\sT^{-\infty} \in D^-(Y(n-1) \times Y(n-1))$. This is completely analogous to the case $n=1$ discussed above. 

Now suppose $Y(\l) = T^\star \bG(k, n+1)$ (where $\l = n+1-2k$) so that $Y(n-1) = T^\star \P^n$. As before we have the map $p(1,n+1): T^\star \P^n \rightarrow \overline{B(1,n+1)}$ which collapses the zero section. However, if $n > 1$, then $\sT^{-\infty} \not\cong \sK$ where $\sK$ is the kernel inducing the map 
$$p(1,n+1)^* p(1,n+1)_*: D^-(T^\star \P^n) \rightarrow D^-(T^\star \P^n).$$ 
 
It turns out $\sK$ is a stronger projection than $\sT^{-\infty}$. In other words,
$$\sK * \sT^{-\infty} \cong \sK \cong \sT^{-\infty} * \sK \in D^-(T^\star \P^n \times T^\star \P^n).$$
The argument used to prove Proposition \ref{prop:main4} fails because the kernel of the map $p(1,n+1)_*$ is now larger (and more complicated) than the kernel of the map $\sT^{-\infty}$. Geometrically, this difference seems to be related to the fact that the singular and intersection cohomologies of $\overline{B(1,n+1)}$ are the same if $n=1$ but different for $n > 1$. More precisely, $\sK$ is akin to singular and $\sT^{-\infty}$ to intersection cohomology. 
 
In \cite{C2} we use $\sT^{-\infty}$ (rather than the geometric kernel $\sK$) to categorify Reshetikhin-Turaev knot invariants. This suggests that $\sT^{-\infty}$ is at least as natural as $\sK$. However, this also begs the obvious question: what is the geometric interpretation of the kernel $\sT^{-\infty} \in D^-(T^\star \P^n \times T^\star \P^n)$ when $n > 1$?

\section{The general flop -- a discussion}

\subsection{The Mukai flop.}
We recall the definition of a general Mukai flop (see for example \cite[Sec. 4]{Nam1}). Let $Y$ be a smooth variety of dimension $2n$ which contains a subvariety $X \subset Y$ isomorphic to $\P^n$ so that $N_{X/Y} \cong T^\star \P^n$. If $Y$ is holomorphic symplectic then this second condition is automatically satisfied. 

Then one can blow up $Y$ and then then blow down to obtain another variety $Y^+$ which also contains a subvariety $X^+ \cong \P^n$ with $N_{X^+/Y^+} \cong T^\star \P^n$. Moreover, there exists maps $Y \rightarrow \overline{Y}$ and $Y^+ \rightarrow \overline{Y}$ to a common (singular) variety $\overline{Y}$ which collapses $X$ and $X^+$ to a point but are isomorphisms away from $X$ and $X^+$. To summarize, we have the following diagram:
\begin{equation*}
\xymatrix{
& Y \times_{\overline{Y}} Y^+ \ar[dl]_{\pi_1} \ar[dr]^{\pi_2} & \\
Y \ar[dr] & & Y \ar[dl] \\
& \overline{Y} &
}
\end{equation*}
Of course, this generalizes our example above where $Y = T^\star \P^n$. As before, $Y \times_{\overline{Y}} Y^+$ is a variety with two equidimensional components. Namikawa proves:

\begin{proposition}\cite[Sec. 4]{Nam1}\label{prop:3} There exists an isomorphism
$$\pi_{2*} \pi_1^*: D(Y) \xrightarrow{\sim} D(Y^+).$$ 
\end{proposition}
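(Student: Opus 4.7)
The plan is to reduce Proposition \ref{prop:3} to the case of the local model $T^\star\P^n$, where the equivalence is established by Corollary \ref{cor:main1} (the $k=1$, $N=n+1$ case), and then patch the local equivalence with the trivial equivalence on the complement of $X$.

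First I would perform the formal-local identification. Since $X \cong \P^n$ is compact with $N_{X/Y} \cong T^\star\P^n$, the formal neighborhood of $X$ in $Y$ is isomorphic to the formal neighborhood of the zero section in $T^\star\P^n$, and similarly for $X^+ \subset Y^+$ (a version of the tubular neighborhood theorem; holomorphic symplecticity is what guarantees the normal bundle has the required form automatically). Both formal neighborhoods map to the formal completion of $\overline{Y}$ at the singular point, which is in turn identified with the formal completion of $\overline{B(1,n+1)}$ at the origin. Under this identification, $Y \times_{\overline{Y}} Y^+$ restricted to the formal neighborhoods is identified with $Z(1,n+1) = T^\star\bG(1,n+1) \times_{\overline{B(1,n+1)}} T^\star\bG(n,n+1)$, and the FM kernel $\O_{Y\times_{\overline Y}Y^+}$ is identified with $\O_{Z(1,n+1)}$.

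Next, the local equivalence is already known: by Corollary \ref{cor:main1}, $\Phi_{\O_{Z(1,n+1)}}: D(T^\star\P^n) \xrightarrow{\sim} D(T^\star\P^n)$ is an equivalence. To globalize, I would invoke a Bondal--Orlov style criterion. Away from $X$, the maps $\pi_1$ and $\pi_2$ are isomorphisms, and $Y\setminus X \cong Y^+\setminus X^+$; so $\Phi_\sP$ is tautologically an equivalence over this open locus. Checking fully-faithfulness via the spanning class of skyscrapers $\{\O_y : y \in Y\}$, the computation of
$$\Hom^i_{Y^+}(\Phi_\sP(\O_y), \Phi_\sP(\O_{y'}))$$
is local on $Y$, so for $y \in X$ or $y' \in X$ the entire computation takes place in the formal neighborhood identified in the previous step, where we know $\Phi_\sP$ is an equivalence and the Homs agree with $\Hom^i_Y(\O_y, \O_{y'})$. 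For $y,y' \notin X$ the agreement is immediate from the isomorphism $Y\setminus X \cong Y^+\setminus X^+$. Since $K_Y$ and $K_{Y^+}$ are trivial in a neighborhood of the flopped locus (holomorphic symplecticity), the Bondal--Orlov criterion upgrades fully-faithfulness to an equivalence.

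The main obstacle will be the compatibility in the formal-local identification: one must verify that the isomorphism of formal neighborhoods really does match the two pairs $(Y,X)$ and $(Y^+,X^+)$ to the two cotangent bundles in a way that takes $Y \times_{\overline Y} Y^+$ to $Z(1,n+1)$ \emph{as a subscheme carrying its reduced/derived structure} (not merely as a set), so that the kernels match on the nose. This is essentially Namikawa's observation that the holomorphic symplectic structure rigidifies the formal neighborhood enough to force this identification. Once this is in hand, the Hom computation reduction and the Bondal--Orlov wrap-up are largely formal.
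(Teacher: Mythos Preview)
Your proposal is correct and follows exactly the strategy the paper attributes to Namikawa: prove the local case (you invoke Corollary~\ref{cor:main1}; Namikawa proved it directly) and then globalize via the isomorphism of formal neighbourhoods of $X\subset Y$ and $\P^n\subset T^\star\P^n$, combined with a Bondal--Orlov spanning-class argument. One small correction: the setup does \emph{not} assume $Y$ is holomorphic symplectic, so you cannot appeal to symplecticity for either the formal-neighbourhood rigidity or the triviality of $K_Y$ near $X$; both follow instead from the hypothesis $N_{X/Y}\cong T^\star\P^n$ together with cohomological vanishing on $\P^n$ (the obstruction groups $H^1(\P^n, T_{\P^n}\otimes S^k N_{X/Y}^\vee)$ vanish, and $K_Y|_X \cong K_{\P^n}\otimes\det(N_{X/Y})^\vee \cong \O_{\P^n}$).
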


Namikawa first checks this isomorphism for the local case of $Y = T^\star \P^n$. He then uses the fact that the formal neighbourhoods of $X \subset Y$ and $\P^n \subset T^\star \P^n$ are isomorphic to prove the more general case above.

\subsection{The stratified Mukai flop of type A}
An abstract definition of a stratified Mukai flop was first discussed by Markman in \cite{M} while studying the geometry of the moduli spaces of sheaves on K3 surfaces. The idea is to immitate the geometry of the situation in diagram (\ref{eq:10}). More specifically, there is a filtration 
$$T^\star \bG(k,N) \supset T^\star \bG(k,N)_1 \supset \dots \supset T^\star \bG(k,N)_k$$
where $ T^\star \bG(k,N)_i$ is the subvariety 
\begin{align*}
\{ (X,V): & X \in \End(\C^N), 0 \xrightarrow{k} V \xrightarrow{N-k} \C^N, X \C^N \subset V \\
& \text{ and } X V \subset 0, \dim (\ker X) \ge N-k+i \}
\end{align*}
and the projection map $p(k,N): T^\star \bG(k,N) \rightarrow \overline{B(k,N)}$ restricted to $T^\star \bG(k,N)_i \setminus T^\star \bG(k,N)_{i+1}$ is a $\bG(i,N-k+i)$-fibration. In this paper we will use the definition from \cite{FW}, which is similar to that in \cite{M} but fits better for our discussion. 

Let $Y$ and $Y^+$ be two smooth varieties equipped with two collections of closed subvarieties 
$$X_k \subset \dots \subset X_1 \subset Y \text{ and } X_k^+ \subset \dots \subset X_1^+ \subset Y^+.$$ 
Now assume there are two birational maps $Y \xrightarrow{p} \overline{Y} \xleftarrow{p^+} Y^+$ and denote by $f: Y \dashrightarrow Y^+$ the induced birational map. Then this data describes a stratified Mukai flop of type $A_{N,k}$ (where $2k \le N$) if the following conditions hold.
\begin{itemize}
\item $f$ induces an isomorphism $Y \setminus X_1 \xrightarrow{\sim} Y^+ \setminus X^+_1$. 
\item $p(X_j) = p^+(X_j^+)$ for $j=1, \dots, k$. We denote $S_j := p(X_j)$. 
\item $S_k$ is smooth and $p|_{X_k}: X_k \rightarrow S_k$ is isomorphic to the projection map $\bG(k,W) \rightarrow S_k$ where $W$ is some $N$-dimensional vector bundle on $S_k$ and $\bG(k,W)$ denotes the relative Grassmannian of $k$-planes. Moreover, the normal bundle $N_{X_k/Y}$ is isomorphic to the relative cotangent bundle $T^\star_{X_k/S_k}$. The same thing holds for $p^+|_{X^+_k}: X_k^+ \rightarrow S_k$ with $W$ replaced by $W^\vee$. 
\item If $k=1$ this should be the usual Mukai flop. If $k \ge 2$ let $\Bl_{X_k} Y, \Bl_{X^+_k} Y^+$ and $\Bl_{S_k} \overline{Y}$ denote the blowups of $Y,Y^+$ and $\overline{Y}$ in $X_k, X_k^+$ and $S_k$ respectively. Then the proper transforms of all $X_j$ and $X^+_j$ together with the birational maps $\Bl_{X_k} Y \rightarrow \Bl_{S_k} \overline{Y} \leftarrow \Bl_{X_k^+} Y^+$ must describe a stratified Mukai flop of type $A_{N-2,k-1}$.  
\end{itemize}

\begin{remark}
Given just the $A_{N,k}$ contraction $p: Y \rightarrow \overline{Y}$ it follows (just like for the usual Mukai flop) that the corresponding stratified Mukai flop exists. For a proof see, for instance, Proposition 2.1 of \cite{FW}. 
\end{remark}

Of course, one would like an analogue of Proposition \ref{prop:3} in the spirit of Theorem \ref{thm:main3}. This theorem would identify an open subset inside $Y \times_{\overline{Y}} Y^+$ which is an analogue of $Z^o(k,N)$ and a line bundle on it so that the pushforward of this line bundle is a kernel which induces a derived equivalence $D(Y) \xrightarrow{\sim} D(Y^+)$.

One possible approach to proving this equivalence is to deform to the normal cone. This means looking at $Y \times \A^1$ and blowing up $X_k \times \{0\} \subset Y$ (and likewise with $Y^+$). It is shown in \cite[Sec. 5]{FW} that the degeneration of $Y \times_{\overline{Y}} Y^+$ to the normal cone 
$$\Bl_{X_k \times \{0\}}(Y \times \A^1) \times_{\A^1} \Bl_{X_k^+ \times \{0\}}(Y^+ \times \A^1)$$ 
breaks up into the correspondence $Z(k,N)$ (section \ref{sec:somegeometry}) for the the local version of the stratified Mukai flop and into the correspondence $\Bl_{X_k} Y \times_{\Bl_{S_k} \overline{Y}} \Bl_{X_k^+} Y^+$ for a stratified Mukai flop of type $A_{N-2,k-1}$. Then using Theorem \ref{thm:main3} and induction one can imagine proving the equivalence on the special fibre (the fibre over $0 \in \A^1$). Since a kernel inducing an equivalence fibre-wise is an open condition this would then imply the equivalence on the general fibre too.  

\subsection{The stratified Mukai flop of type D}
We now briefly discuss the {\em stratified Mukai flop of type $D_{2m+1}$}\index{stratified Mukai flop of type D}.

\subsubsection{The local model}

Fix a symmetric, non-degenerate bilinear form $\la \cdot, \cdot \ra$ on $\C^{2N}$. Denote by $\IG(k,2N)$ the {\em isotropic Grassmannian}\index{isotropic Grassmannian} parametrizing isotropic $k$-planes in $\C^{2N}$. When $k=N$ it turns out $\IG(N,2N)$ has two components denoted $\IG(N,2N)^-$ and $\IG(N,2N)^+$. Two isotropic planes $V,V' \subset \C^{2N}$ belong to the same component if and only if $\dim(V \cap V') \equiv N \bmod 2$. 

The cotangent bundles of $\IG(N,2N)^\pm$ can be described as
$$T^\star \IG(N,2N)^\pm = \{(X,V) \in \so(2N) \times \IG(N,2N)^\pm: X(\C^{2N}) \subset V, X(V) \subset 0\}$$
where $X \in \so(2N)$ is a skew-symmetric matrix meaning that $\la Xv, w \ra = - \la v, Xw \ra$. Now consider the map 
$$ip_-: T^\star \IG(N,2N)^- \longrightarrow \IB(N,2N)$$ 
given by forgetting $V$ where $\IB(N,2N) := \{ X \in \so(2N) : X^2 = 0\}$. A general point $X \in \IB(N,2N)$ has $\dim (\ker X)$ equal to $N$ or $N-1$ depending on whether $N$ is even or odd. This essentially comes down to the fact that a skew-symmetric matrix of size $N$ has rank at most $N-1$ if $N$ is odd but can have full rank if $N$ is even. 

So there are two cases to consider. If $N$ is even then $\IB(N,2N)$ has two components. Two general points $X_1,X_2 \in \IB(N,2N)$ lie in the same component if and only if $X_1(\C^{2N}) \cap X_2(\C^{2N})$ is even. Then one component has a resolution given by $T^\star \IG(N,2N)^-$ and the other component has a resolution given by $T^\star \IG(N,2N)^+$. 

If $N$ is odd then $\IB(N,2N)$ only has one component. A resolution of this component is the variety 
\begin{equation}\label{eq:12}
\{(X,V): 0 \rightarrow V \xrightarrow{2} V^\perp \rightarrow \C^{2N}, X(\C^{2N}) \subset V, X(V^\perp) \subset 0\}
\end{equation}
where $X \in \so(2N)$ and $V \in \IG(N-1,2N)$. On the other hand, there is a natural map from 
\begin{equation}\label{eq:13}
\{(X,V,V'): 0 \xrightarrow{N-1} V \xrightarrow{1} V' \xrightarrow{1} V^\perp \xrightarrow{N-1} \C^{2N}, X(\C^{2N}) \subset V, X(V^\perp) \subset 0\}
\end{equation}
to (\ref{eq:12}). This map is everywhere $2:1$ since the fibres are all isomorphic to $\IG(1,2)$ using the restriction of $\la \cdot, \cdot \ra$ to $V^\perp/V$ and $\IG(1,2)$ is the disjoint union of two points. Forgetting $V$ and $V^\perp$ we get a generically one-to-one map from (\ref{eq:13}) to the two connected components in $T^\star \IG(N,2N)$. Thus if $N$ is odd we get the following diagram (in analogy with (\ref{eq:10}))
\begin{equation}\label{eq:11}
\xymatrix{
T^* \IG(N,2N)^- \ar[dr]_{\ip^-} & & T^* \IG(N,2N)^+ \ar[dl]^{\ip^+} \\
& \IB(N,2N) &
}
\end{equation}
This is the local model for the stratified Mukai flop of type $D_{2m+1}$ where $N=2m+1$. We equip everything with the $\C^\times$-action acting on the fibres of $T^\star \IG(N,2N)^\pm$ just like in the case of $T^\star \bG(k,N)$. 

\begin{remark}
The type A Grassmannian $\bG(k,N)$ corresponds to the minuscule $GL(N)$ representation $\Lambda^k(\C^N)$. In the case of $D_N$ there are three minuscule representations. One of them corresponds to $\IG(1,2N)$ while the other two correspond to $\IG(N,2N)^-$ and $\IG(N,2N)^+$. When $N$ is even the latter two respresentations are self dual but when $N$ is odd they are dual to each other. This is the representation theoretic manifestation of the dichotomy above. 
\end{remark}

{\bf Example.} Let us briefly examine $\IB(1,2)$ and $\IB(2,4)$. We fix the bilinear form $\left( \begin{matrix} 0 & I \\ I & 0 \end{matrix} \right)$ on $\C^{2N}$ and write a general element of $\so(2N)$ as $\left( \begin{matrix} A & B \\ C & D \end{matrix} \right)$ where $A,B,C,D$ are $N \times N$ matrices. The condition that it be skew-symmetric translates into 
$$A+D^t = 0 \text{ and } B+B^t = 0 = C+C^t$$
while the condition that it squares to zero is equivalent to 
$$A^2+BC=0, AB = BA^t \text{ and } CA = A^tC.$$
Now, if $N=1$ then the first condition implies that $B=0=C$ and $D=-A$ and the second condition says $A=0$ so that $\IB(1,2)$ consists of just a point. 

If $N=2$ then an elementary calculation (which we omit) shows that $\IB(2,4)$ has two possible types of solutions. The first is of the form $\left( \begin{matrix} A & 0 \\ 0 & -A \end{matrix} \right)$ where $A$ is a $2 \times 2$ matrix with $\det(A)=0=\tr(A)$ (i.e. a $2$-dimensional quadric cone). The second solution is of the form
$$\left( \begin{matrix} 
u & 0 & 0 & x \\
0 & -u & -x & 0 \\
0 & y & -u & 0 \\
-y & 0 & 0 & u 
\end{matrix} \right)$$
where $u^2=xy$. So $\IB(2,4)$ is the union of two $2$-dimensional quadric cones which intersect only at their apex.

\subsubsection{The general model}

Fix $N=2m+1$ from now on. The varieties $T^\star \IG(N,2N)^\pm$ have a natural filtration 
$$T^\star \IG(N,2N)^\pm \supset T^\star \IG(N,2N)^\pm_1 \supset \dots \supset T^\star \IG(N,2N)^\pm_m$$
where $T^\star \IG(N,2N)^\pm_i$ corresponds to the locus where $\dim(\ker X) \ge N+1+2i$. As before, we denote the image of $T^\star \IG(N,2N)^\pm_i$ by $S_i$. The subvariety $T^\star \IG(N,2N)^\pm_i \setminus T^\star \IG(N,2N)^\pm_{i+1}$ consists of the locus where $\dim(\ker X) = N+1+2i$ and hence is isomorphic to 
\begin{align*}
\{(X,V,V'): & 0 \xrightarrow{N-1-2i} V' \xrightarrow{2i+1} V \xrightarrow{2i+1} {V'}^\perp \xrightarrow{N-1-2i} \C^{2N} \\
&  X(\C^{2N}) = V', X({V'}^\perp) = 0\}
\end{align*}
since $V'$ can be recovered as $X(\C^{2N})$. Restricting $\la \cdot, \cdot \ra$ to ${V'}^\perp/V'$ we find that $V/V'$ is isotropic inside ${V'}^\perp/V'$. Thus the restriction of $\ip^\pm$ to $T^\star \IG(N,2N)^\pm_i \setminus T^\star \IG(N,2N)^\pm_{i+1}$ is a $\IG(2i+1, 4i+2)^\pm$-fibration onto its image $S_i \setminus S_{i-1}$. 

Motivated by this structure, one can define a stratified Mukai flop of type $D_{2m+1}$ just like in the type A case. In other words, one has subvarieties 
$$X_m^- \subset \dots \subset X_1^- \subset Y^- \text{ and } X_m^+ \subset \dots X_1^+ \subset Y^+$$
and maps $Y^- \xrightarrow{\ip^-} \overline{Y} \xleftarrow{\ip^+} Y^+$ satisfying the same conditions as before. The difference is that $W$ is now a rank $2N$ vector bundle equipped with a fibre-wise non-degenerate, symmetric bilinear form and $\ip^\pm|_{X_m} \rightarrow S_m$ is $\IG(N,W)^\pm \rightarrow S_m$ which is the relative isotropic Grassmannian.

\subsection{Equivalences in type D}

Once again we can consider the fibre product 
$$\IZ(N) :=  T^\star \IG(N,2N)^- \times_{\IB(N,2N)} T^\star \IG(N,2N)^+$$ 
but, as before, we cannot expect $\O_{\IZ(N)}$ to induce an equivalence. On the other hand, $\IZ(N)$ is made up of $m+1$ components $\IZ_0(N), \dots , \IZ_m(N)$ of dimension $N(N-1)$ where 
\begin{align}
\IZ_s(N) &= \{(X,V,V'): \dim(\ker X) \ge N+2s+1 \text{ and } \dim(V \cap V') \ge 2(m-s), \nonumber \\
& X(\C^{2N}) \subset V, X(\C^{2N}) \subset V', X(V) \subset 0, X(V') \subset 0\} \nonumber
\end{align}
inside $T^\star \IG(N,2N)^- \times  T^\star \IG(N,2N)^+$. One can define open subvarieties $\IZ^o_s(N) \subset \IZ_s(N)$ by imposing the additional condition 
$$\dim(\ker X) + \dim(V \cap V') \le 2N+2$$
just as we did to define $Z^o_s(k,N)$. It is not difficult to check that the open subvariety inside $\IZ^o_s(N) \cap \IZ^o_{s+1}(N)$ given by the condition 
$$\dim(\ker X) = N+2s+3 \text{ and } \dim(V \cap V') = 2(m-s)$$
is co-dimension one inside $\IZ_s(N)$ and $\IZ_{s+1}(N)$. This is completely analogous to the situation in type A. So one should strongly expect an analogue of Theorem \ref{thm:main3}.

\begin{conjecture}\label{conj:main1}
There exists a $\C^\times$-equivariant line bundles $\sIL(N)$ on $\IZ^o(N)$ such that $i_* j_* \sIL(N)$ induces an equivalence 
$$D(T^\star \IG(N,2N)^-) \xrightarrow{\sim} D(T^\star \IG(N,2N)^+).$$ 
Here $i$ and $j$ are the natural inclusions
$$\IZ^o(N) \xrightarrow{j} \IZ(N) \xrightarrow{i} T^\star \IG(N,2N)^- \times T^\star \IG(N,2N)^+.$$
\end{conjecture}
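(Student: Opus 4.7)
The plan is to mimic the three-step strategy that proved Theorem \ref{thm:main3} in type A. First, construct a geometric categorical action (most naturally of some simply-laced Kac--Moody type whose Weyl group contains a reflection swapping the two minuscule weight spaces) on a family of categories whose extreme weight spaces are $D(T^\star\IG(N,2N)^-)$ and $D(T^\star\IG(N,2N)^+)$. Second, apply Theorem \ref{thm:main1} (or an appropriate type D analogue) to produce, from a Rickard-style complex, a kernel $\sT \in D(T^\star\IG(N,2N)^- \times T^\star\IG(N,2N)^+)$ inducing an equivalence. Third, identify $\sT$ with $i_*j_*\sIL(N)$ by using that Cohen--Macaulay sheaves are determined by their restriction to any open subset whose complement has codimension at least two, together with an explicit identification on each component $\IZ^o_s(N)$.

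Natural candidates for the intermediate weight spaces are cotangent bundles of two-step isotropic partial flag varieties parametrizing pairs $0 \subset V' \subset V \subset \C^{2N}$ with $V$ isotropic of dimension $N$ and $\dim V' = s$, for $s = 0, 1, \dots, m$. These are suggested by the Brill--Noether stratification of $\IB(N,2N)$: the stratum where $\dim(\ker X) = N + 1 + 2s$ fibres over $S_s$ with isotropic Grassmannian fibres, and the associated Hecke-type correspondences should furnish the generating $\sE$'s and $\sF$'s between adjacent weight spaces. The first-order deformations needed for condition (vi) can be built as Slodowy-slice style one-parameter families, analogous to $\wt{T^\star\bG(k,N)}$ from section \ref{sec:flops2}. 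Granted such an action, the resulting $\sT$ is automatically a sheaf supported on $\IZ(N)$, because each term $\sF^{(\l+s)}*\sE^{(s)}$ in the Rickard complex is a sheaf supported on the corresponding component $\IZ_s(N)$, exactly as in \cite[Prop.~6.3]{CKL3}. Identifying the adjoint kernel along the lines of Proposition \ref{prop:main2} then shows $\sT$ is Cohen--Macaulay, and restricting to $\IZ^o(N)$, where the components meet only in Cartier divisors, pins down $\sIL(N)$ on each $\IZ^o_s(N)$ by its divisor class and $\C^\times$-equivariant shift, exactly as $\sL(k,N)$ was identified in type A.

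The main obstacle will be constructing the categorical action, since no off-the-shelf type D analogue of the Chuang--Rouquier framework for coherent sheaves is available, and the naive $\sl_2$ action on $\bigoplus_k D(T^\star\IG(k,2N))$ is obstructed by the fact that the transition between $\IG(N-1,2N)$ and the two components $\IG(N,2N)^\pm$ is not a standard Hecke correspondence. A possible bypass, avoiding categorical actions altogether, is the deformation-to-the-normal-cone strategy sketched for the general type A flop: blow up the deepest stratum $X_m^-\times\{0\}$ inside $T^\star\IG(N,2N)^-\times\A^1$, do the same on the plus side, and argue that the degeneration of the fibre product breaks into the local $D_{2m-1}$ model for smaller $N$ and a residual type A piece. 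Both ingredients can then be handled by induction on $m$ and by Theorem \ref{thm:main3} respectively, and since inducing a derived equivalence is an open condition on the base $\A^1$, proving the equivalence on the special fibre propagates to the generic fibre, yielding the desired kernel $i_*j_*\sIL(N)$.
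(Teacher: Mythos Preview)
The statement you are attempting to prove is labeled \textbf{Conjecture} in the paper, and the paper does not supply a proof. There is nothing to compare your proposal against: the author explicitly leaves this open, remarking immediately after the conjecture that ``since the cohomology of $\IG(k,2N)$ is fairly different than that of $\bG(k,N)$ it does not seem possible to construct a categorical $\sl_2$ action on cotangent bundles to isotropic Grassmannians,'' though ``one can imagine that some sort of action still exists.''

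Your proposal is not a proof but a research strategy, and you recognize this yourself when you write that ``the main obstacle will be constructing the categorical action.'' That obstacle is precisely the one the paper flags, and it is genuine: the intermediate weight spaces you suggest (cotangent bundles of two-step isotropic flag varieties) do not fit into an $\sl_2$ string in any obvious way, the Hecke correspondences between $\IG(N-1,2N)$ and the two components $\IG(N,2N)^\pm$ behave differently from the type A case, and no Kac--Moody datum with the required properties has been identified. Your fallback via deformation to the normal cone is also speculative --- the paper sketches that argument only for the general type A flop, and even there it is presented as a possible approach rather than a completed proof. In short, both your first and second strategies reduce the conjecture to other unproved statements of comparable difficulty.

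So your write-up is a reasonable outline of how one might attack the problem, and it is well aligned with the paper's own hints, but it is not a proof and should not be presented as one.
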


Since the cohomology of $\IG(k,2N)$ is fairly different than that of $\bG(k,N)$ it does not seem possible to construct a categorical $\sl_2$ action on cotangent bundles to isotropic Grassmannians. Nevertheless, one can imagine that some sort of action still exists. Finally, note that $T^\star \IG(N,2N)^\pm$ also have natural one-parameter deformations, defined just like in the type A case. This leads to a stratified Atiyah flop of type D. One can also conjecture and study derived equivalences in this case. 

\begin{remark} There are also stratified flops of type E which show up naturally in the birational geometry of resolutions of nilpotent orbit closures. See, for instance, \cite{CF} for a description of these. Most questions mentioned above in the case of type D flops also remain valid for type E. 
\end{remark}

\section{Further topics}\label{sec:topics}

{\bf Deformation quantization:}\index{deformation quantization} The category of $D$-modules on $\bG(k,N)$ can be deformed to the category of coherent sheaves on $T^\star \bG(k,N)$. The specialization map from $D$-modules to coherent sheaves is given by taking the associated graded. Now consider the open subset 
$$j: U \hookrightarrow \bG(k,N) \times \bG(N-k,N)$$
defined as the locus $(V,V')$ where $V \cap V' = 0$. It turns out that the push-forward $j_* \O_U$ of the $D$-module $\O_U$ is a $D$-module on $\bG(k,N) \times \bG(N-k,N)$ which induces an equivalence $\Dmod(\bG(k,N)) \xrightarrow{\sim} \Dmod(\bG(N-k,N))$. In \cite{CDK} we check that the associated kernel of this equivalence is actually the kernel $\sT(k,N) \in D(T^\star \bG(k,N) \times T^\star \bG(N-k,N))$.  

\begin{remark} Calculating the associated graded of a $D$-module is quite difficult in general. In \cite{CDK} we compute the associated graded by first constructing a categorical $\sl_2$ action on categories of $D$-modules on $\sqcup_k \bG(k,N)$ and then showing that it agrees, via the associated graded map, with the categorical $\sl_2$ action on coherent sheaves on $\sqcup_k T^\star \bG(k,N)$. So this approach does not really give an entirely different proof that $\sT(k,N)$ is invertible. 
\end{remark}

The category of $D$-modules on $\bG(k,N)$ is an example of a deformation quantization of the category of coherent sheaves on $T^\star \bG(k,N)$. But deformation quantizations also exists for category of coherent sheaves on quiver varieties, for instance. Understanding how these categorical Lie algebra actions and the corresponding equivalences deform to these deformation quantizations is a little explored but interesting problem. 

{\bf Flops as moduli spaces:} Bridgeland \cite{B} describes a way to construct the Atiyah flop $Y^+$ of a 3-fold $p: Y \rightarrow \overline{Y}$ as the moduli of perverse coherent sheaves on $Y$ (the definition of these perverse sheaves uses the map $p$). Then the universal family over the product $Y \times Y^+$ induces the derived equivalence $D(Y) \xrightarrow{\sim} D(Y^+)$. 

Can you generalize this result to other Atiyah or Mukai flops? This question seems difficult (but also interesting) in part because, as we saw in section \ref{sec:G(2,4)}, the autoequivalence of $D(\widetilde{T^\star \bG(2,4)})$ is not induced by the structure sheaf of the natural fibre product.

\end{document}